\newtheorem{theorem}{Theorem}
\newtheorem{corollary}[theorem]{Corollary}
\newtheorem{definition}[theorem]{Definition}
\newtheorem{lemma}[theorem]{Lemma}
\newtheorem{proposition}[theorem]{Proposition}
\newenvironment{proof}[1][Proof]{\noindent\textbf{#1.} }{\ \rule{0.5em}{0.5em}}
\newcommand{\R}{\mathbb{R}}
\newcommand{\E}{\mathbb{E}}
\renewcommand{\P}{\mathbb{P}}
\newcommand{\bA}{\mathbf{A}}
\newcommand{\cP}{\mathcal{P}}
\newcommand{\X}{\mathbf{X}}
\renewcommand{\S}{\mathbb{S}}
\title{Asymptotically Optimal Multi-Paving}
\author{Mohan Ravichandran\footnote{Mimar Sinan Fine Arts University, \texttt{mohan.ravichandran@msgsu.edu.tr}, Supported
by Tubitak 1001 grant 115F204} \hspace{0.1in} and \hspace{0.1in} Nikhil
Srivastava\footnote{University of California, Berkeley, \texttt{nikhil@math.berkeley.edu}, Supported by NSF grant
CCF-1553751 and a Sloan research fellowship.}}
\date{}
\begin{document}
\maketitle
\begin{abstract}
Anderson's paving conjecture, now known to hold due to the resolution of the
Kadison-Singer problem asserts that every zero diagonal Hermitian matrix admits
non-trivial pavings with dimension independent bounds. In this paper, we develop
a technique extending the arguments of Marcus, Spielman and Srivastava in their
solution of the Kadison-Singer problem to show the existence of non-trivial 
pavings for collections of matrices. We show that given zero diagonal Hermitian
contractions $A^{(1)}, \cdots, A^{(k)} \in M_n(\mathbb{C})$ and $\epsilon > 0$,
one may find a paving $X_1 \amalg \cdots \amalg X_r = [n]$ where $r \leq
18k\epsilon^{-2}$ such that, \[\lambda_{max} (P_{X_i} A^{(j)} P_{X_i}) <
\epsilon, \quad i \in [r], \, j \in [k].\] 
As a consequence, we get the correct asymptotic estimates for paving general zero diagonal matrices; zero diagonal contractions can be $(O(\epsilon^{-2}),\epsilon)$ paved. As an application, we give a simplified proof wth slightly better estimates of a theorem of Johnson, Ozawa and Schechtman concerning commutator representations of zero trace matrices.
\end{abstract}

\section{Introduction}
Anderson's Paving conjecture \cite{anderson1979extreme} asserts that for every $\epsilon>0$ there is an
integer $r$ such that every zero diagonal complex $n\times n$ matrix $M$ admits
a {\em paving} $X_1\cup\ldots\cup X_{r}=[n]$, which satisfies: $$\|P_{X_i} M
P_{X_i}\|< \epsilon\,\|M\| \quad\textrm{for all $i=1,\ldots,r,$}$$ 
where the $P_{X_i}$ are diagonal projections.
This conjecture, which implies a positive solution to the Kadison-Singer Problem,
is proven in \cite{MSS2} with the dependence $r=(6/\epsilon)^8$ which arises as
follows. First, the ``one-sided'' bound 
$$\lambda_{max}(P_{X_i}MP_{X_i})< \epsilon\, \|M\|$$
is obtained with $r=(6/\epsilon)^2$ for {\em Hermitian}
$M$, via the method of interlacing families of polynomials. Then this is
extended to a two-sided bound by taking a product of pavings for $M$ and $-M$,
and finally to non-Hermitian $M=A+iB$ by taking yet another product of two-sided
pavings of $A$ and $B$. Each product increases $r$ quadratically, and in general
this approach allows one to simultaneously pave any $k$ Hermitian matrices in
the one-sided sense with the dependence $(6/\epsilon)^{2k}$. Our main theorem says,


\begin{theorem}\label{thm:paving}
Given zero diagonal Hermitian contractions $A^{(1)}, \cdots, A^{(k)} \in
M_n(\mathbb{C})$ and $\epsilon > 0$, there exists a paving $X_1 \amalg \cdots \amalg X_r = [n]$ where $r \leq 18k\epsilon^{-2}$ such that,
\[\lambda_{max} (P_{X_i} A_j P_{X_i}) < \epsilon, \quad i \in [r], \, j \in [k].\]
\end{theorem}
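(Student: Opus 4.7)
\emph{Strategy.} Iterating the single-matrix MSS paving via products of pavings costs roughly $(6/\epsilon)^2$ per matrix and so gives exponential dependence on $k$. To avoid this I would package all $k$ matrices into a single interlacing-families argument that produces one common partition $\{X_i\}$ of $[n]$ and whose output polynomial controls every one of the $kr$ blocks $P_{X_i} A^{(j)} P_{X_i}$ simultaneously. Equivalently, view this as paving the block-diagonal matrix $\tilde A := A^{(1)} \oplus \cdots \oplus A^{(k)} \in M_{nk}(\mathbb{C})$ under the restricted class of block-constant diagonal projections $P_{X_i}\otimes I_k$ induced by partitions of $[n]$; it is this constraint (rather than fully free partitions of $[nk]$) that forces a single common partition of $[n]$ to work for every $A^{(j)}$.

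\emph{Interlacing family and root bound.} Pick a uniformly random colouring $c:[n] \to [r]$ and set $X_i = c^{-1}(i)$. At each leaf of the natural tree of partial colourings, attach the polynomial
\[
q_c(x) = \prod_{i=1}^{r} \prod_{j=1}^{k} \det\bigl(xI - P_{X_i} A^{(j)} P_{X_i}\bigr),
\]
whose largest root equals $\max_{i,j} \lambda_{\max}(P_{X_i} A^{(j)} P_{X_i})$. Two things must be established. First, the tree of conditional expectations of $q_c$ should form an interlacing family; I would verify this by lifting each factor to a mixed characteristic polynomial of rank-one outer products built from the off-diagonal entries of $A^{(j)}$, then invoking closure of real-stable polynomials under the relevant partial expectations. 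Second, one must show that the largest root of $\E q_c$ is at most $\epsilon$ whenever $r \ge 18k/\epsilon^2$; I plan to obtain this by expressing $\E q_c$ as the univariate restriction of a suitable multivariate real-stable polynomial and running the MSS multivariate barrier method on it.

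\emph{Main obstacle.} The principal difficulty is the root bound: showing that the $k$ matrices contribute \emph{additively}, and not multiplicatively, to the barrier shifts. Because $\tilde A$ couples all $k$ matrices to the same diagonal coordinates, the rank-one updates appearing in the barrier argument arrive in synchronised batches of $k$ terms sharing a coordinate, and the naive per-matrix treatment would cost a factor of $k$ inside the square root. A well-chosen joint multivariate barrier potential that tracks all $k$ matrices simultaneously should instead let the per-step shift scale like $\sqrt{k/r}$ rather than $k/\sqrt{r}$, yielding a final root bound of order $\sqrt{k/r}$ and hence the asserted $r \asymp k/\epsilon^{2}$ dependence. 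Proving the corresponding barrier shift inequalities with constants sharp enough to reach the clean $18k\epsilon^{-2}$ bound is where the real work lies; even the $k=1$ specialisation produces a better constant than the $(6/\epsilon)^2$ of MSS, which suggests that the barrier analysis must be redone from the ground up rather than invoked as a black box.
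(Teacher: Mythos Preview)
Your leaf polynomials $q_c(x)=\prod_{i,j}\det(xI-P_{X_i}A^{(j)}P_{X_i})$ do \emph{not} form an interlacing family: already the root of the tree, $\E_c q_c$, can fail to be real-rooted. Take $n=r=k=2$ and $A^{(1)}=A^{(2)}=\begin{pmatrix}0&1\\1&0\end{pmatrix}$. The four ordered $2$-partitions of $[2]$ give $q=(x^2-1)^2$ twice and $q=x^4$ twice, so $\E q_c = x^4-x^2+\tfrac12$, whose roots are complex. The underlying reason is the one you flag but underestimate: assigning coordinate $\ell\in[n]$ to a part moves $k$ rows/columns at once in $\tilde A=\bigoplus_j A^{(j)}$, so the relevant update is rank $k$, not rank one, and the Borcea--Br\"and\'en symbol you would need, $\sum_{i_0}\alpha_{i_0}\prod_{i\neq i_0}\prod_{j} w_{ij}$, is not stable for $k\ge 2$ (e.g.\ $w_{11}w_{12}+w_{21}w_{22}$ vanishes at $w_{11}=w_{12}=e^{i\pi/4}$, $w_{21}=w_{22}=e^{3i\pi/4}$). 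So neither the interlacing nor a reduction to mixed characteristic polynomials goes through for your $q_c$, and the root-bound problem you single out as ``the main obstacle'' never even gets posed.

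The paper's fix is to replace the product $\prod_j\chi[A^{(j)}(S_i)]$ at each block by the \emph{mixed determinantal polynomial} $\chi[A^{(1)}(S_i),\ldots,A^{(k)}(S_i)]$, an average over further $k$-subpartitions of $S_i$. This sacrifices the property you rely on---that the largest root equals $\max_j\lambda_{\max}(P_{X_i}A^{(j)}P_{X_i})$---but the paper recovers it up to a factor of $k$ via a separate monotonicity argument (Theorem~\ref{thm:shrink}): $\max_j\lambda_{\max}\chi[A^{(j)}]\le k\,\lambda_{\max}\chi[A^{(1)},\ldots,A^{(k)}]$ for zero-diagonal Hermitians. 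The payoff is twofold. First, the MDP family \emph{does} interlace (the extra averaging restores rank-one structure and makes the Borcea--Br\"and\'en symbol stable). Second, the expected MDP over random $r$-partitions is literally a mixed characteristic polynomial in the sense of \cite{MSS2} (Lemma~\ref{lem:mdpmcp}), so the root bound $O(1/\sqrt{rk})$ follows directly from the existing barrier estimate---no new barrier analysis is needed. The factor of $k$ you were hoping to save inside the barrier is instead paid once, cleanly, in the shrinking theorem.
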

Thus, we improve the dependence for simultaneously (one-sided) paving $k$ matrices from
exponential in $k$ to linear in $k$. 
Taking $k=2$ with the pair $A, -A$ for a zero diagonal Hermitian $A$ yields two
sided pavings of size $r = O(1/\epsilon^2)$.

We remark that this bound matches the $r=\Omega(1/\epsilon^2)$ lower bound
established by Casazza et al. \cite{CEKP07}, and the $r=\Omega(k)$ lowerbound
established by Popa and Vaes \cite{popa2015paving}, who also speculated as to
whether a polynomial or linear dependence might be achievable. In the last
section, we show how one may combine the above two examples to construct $k$
tuples of zero diagonal matrices for which a multi-paving requires at least $k
\lfloor \epsilon^{-2}\rfloor$ blocks. Thus, our result is asymptotically optimal
in terms of both $k$ and $\epsilon$. 

We note that in the infinite-dimensional setting, Popa and Vaes \cite{popa2015paving} have shown that given a singular MASA $\mathcal{A}$ in a type $II_1$ factor $\mathcal{M}$, when considering the tuple $\mathcal{A}^{\omega} \subset \mathcal{M}^{\omega}$ where $\mathcal{A}^{\omega}$ and $\mathcal{M}^{\omega}$ denote the ultrapowers of the relevant von Neumann algebras, the bound $\epsilon=2\sqrt{r-1}/r$ is achievable for
$(r,\epsilon)-$paving for any finite collection of zero diagonal operators . We
emphasise here that Popa and Vaes show that in this setting, the multi-paving bound is \emph{independent} of the number of operators. However, when it comes to MASAs with large normalisers, or finite matrices for that matter, the situation is quite different; the number of blocks must grow linearly in the number of matrices.

A corollary of our main theorem is the following improvement of the
quantitative commutator theorem of \cite{JOS} (who had provided an estimate of
$e^{K\sqrt{\log m\log\log m}a}$ for a large constant $K$), whose main advantage is a significantly simplified proof.
\begin{corollary}
Every zero trace matrix $A \in M_n(\mathbb{C})$ may be written as $A = [B, C]$, such that $\|B\|\|C\|\le 300\, e^{9\sqrt{\log(m)}}||A||$.
\end{corollary}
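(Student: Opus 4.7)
First, reduce to the case of a Hermitian zero-diagonal contraction. Every zero-trace matrix is unitarily similar to one with zero diagonal (a classical result of Fillmore), and unitary conjugation preserves operator norms of both the matrix and the commutator factors, so we may assume $A$ has zero diagonal. Decomposing $A = A_1 + iA_2$ into its Hermitian and anti-Hermitian parts (each of norm at most $\|A\|$), we apply Theorem~\ref{thm:paving} simultaneously to the four matrices $\{\pm A_1, \pm A_2\}$ with $k=4$; any one-sided paving for this quadruple is two-sided for $A$ itself. We may therefore work throughout with a single Hermitian zero-diagonal contraction.

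Next, apply paving iteratively to build a refinement tree. Fix $\epsilon \in (0,1)$ and let $r = \lceil 72/\epsilon^2\rceil$. At level $1$, Theorem~\ref{thm:paving} yields a partition $[n] = X_1 \sqcup \cdots \sqcup X_r$ with $\|P_{X_j} A P_{X_j}\| \leq \epsilon \|A\|$. Within each block, apply the theorem again to the (rescaled) restriction of $A$ to obtain $r$ sub-blocks on each of which $A$ now has norm at most $\epsilon^2\|A\|$. Iterate $T$ times to get a tree whose leaves $Y$ satisfy $\|P_Y A P_Y\| \leq \epsilon^T\|A\|$. Assemble a single commutator from this tree as follows: define a diagonal $D$ by $D_{pp} = \sum_{t=1}^T c_t\, a_t(p)$, where $a_t(p) \in \{1,\dots,r\}$ is the level-$t$ sub-block index of $p$ and the scalars $c_t = (2r)^{T-t}$ are chosen so that the leading level-$t$ contribution dominates the total deeper-level jitter; this ensures $D$ takes distinct values on distinct leaves. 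Set $C_{pq} = A_{pq}/(D_{pp}-D_{qq})$ when $D_{pp} \neq D_{qq}$, and $C_{pq} = 0$ otherwise. Then $[D,C]$ agrees with $A$ on every pair $(p,q)$ in distinct leaves, so the residue $R := A - [D,C]$ is block-diagonal over the leaves with $\|R\| \leq \epsilon^T\|A\|$.

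Finally, bound the norms and optimize. Clearly $\|D\| \leq r\sum_t c_t = O((2r)^T)$. To bound $\|C\|$, decompose $C = \sum_{t=1}^T C^{(t)}$ according to the deepest common ancestor of $(p,q)$ in the tree; within each level-$(t-1)$ block $Y$, the matrix $C^{(t)}|_Y$ is a Schur product of the off-sub-block part of $A|_Y$ with a block-Hilbert-type kernel at scale $c_t$. Since the classical Hilbert kernel has Schur multiplier norm $\pi$ and Schur norms pull back through the map $p \mapsto a_t(p)$, this gives $\|C^{(t)}\| \leq 2\pi \epsilon^{t-1}/c_t \cdot \|A\|$; summing the resulting geometric series with our choice of $c_t$ yields $\|C\| = O(\epsilon^{T-1}\|A\|)$. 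The residue $R$ is absorbed by expressing each leaf block as a commutator via the classical $D=\mathrm{diag}(1,\dots,|Y|)$ construction; this contributes an additional polylogarithmic factor. Choosing $\epsilon \approx e^{-\sqrt{\log n}}$ and $T \approx \sqrt{\log n}$ balances the growth of $(2r)^T$ against the decay of $\epsilon^T$, and tracking the numerical constants (the $\pi$ from the Hilbert bound, the constant $18k$ from Theorem~\ref{thm:paving}, and the geometric series tails) yields the stated bound $\|B\|\|C\| \leq 300\,e^{9\sqrt{\log n}}\|A\|$. The principal technical obstacle is the Schur-multiplier bound on each $C^{(t)}$: we must show that the block-Hilbert kernel inherits (up to a constant independent of the tree depth and branching) the Schur norm $\pi$ of the scalar Hilbert matrix, which requires a careful accounting of the deeper-level jitter in the differences $D_{pp} - D_{qq}$ and is what dictates the particular choice $c_t = (2r)^{T-t}$.
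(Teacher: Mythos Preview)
Your recursive-paving strategy is in the same spirit as the paper's, but the implementation has a quantitative gap that prevents you from reaching $e^{O(\sqrt{\log m})}$; as written it yields only $O(\sqrt{m})$.

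The issue is the \emph{one-dimensional} placement of the spectrum of $D$. With $r$ sub-blocks at each node and $c_t=(2r)^{T-t}$, your $D_{pp}$ are real numbers and $\|D\|\asymp(2r)^T$. Granting your Schur--Hilbert estimate, $\|C\|\asymp\epsilon^{T-1}$, so with $r=\Theta(\epsilon^{-2})$ the product is
\[
\|D\|\,\|C\|\asymp (2r)^T\epsilon^{T-1}\asymp \epsilon^{-T-1}.
\]
To reach singleton leaves (or otherwise kill the residue) you must have $r^T\gtrsim m$, i.e.\ $T\gtrsim \tfrac{\log m}{2\log(1/\epsilon)}$, which forces $\epsilon^{-T}\gtrsim \sqrt{m}$ for \emph{every} choice of $\epsilon$. (Your suggested $\epsilon\approx e^{-\sqrt{\log m}}$, $T\approx\sqrt{\log m}$ gives $(2r)^T\epsilon^{T-1}\approx m\cdot 144^{\sqrt{\log m}}$, which is worse than the trivial bound.) The underlying reason is geometric: $r$ well-separated clusters on a line of length $L$ are at distance only $\Theta(L/r)$, so each recursive step costs a factor $\sim r\cdot\epsilon\sim\sqrt{r}$ rather than $O(1)$.

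The paper avoids this by taking $B$ diagonal with \emph{complex} entries: at each level the spectra of the $B_{ii}$ are placed in $r$ sub-squares of a $\sqrt{r}\times\sqrt{r}$ grid tiling $[-1,1]\times[-i,i]$, giving separation $\Theta(1/\sqrt{r})$, and Rosenblum's contour-integral solution to $A_{ij}=B_{ii}C_{ij}-C_{ij}B_{jj}$ then yields $\|C_{ij}\|=O(\sqrt{r})$. This produces the recursion $\Lambda(m)\le O(1)\cdot\Lambda(m/r)+O(r^{3/2})$ with a \emph{constant} multiplicative factor, which after $T=\log_r m$ iterations and optimization over $r$ gives $e^{O(\sqrt{\log m})}$. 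Your Schur--Hilbert machinery does not transfer to complex differences $D_{pp}-D_{qq}$, so simply making $D$ complex will not rescue the argument without essentially reverting to Rosenblum.

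Two smaller gaps, secondary to the above: (i) the ``block-Hilbert with jitter'' kernel you need is not Toeplitz, and inheriting the Schur bound $\pi$ from the scalar Hilbert matrix requires justification---a crude cyclic-diagonal estimate gives $O(\log r)/c_t$, not $\pi/c_t$; (ii) writing the residue $R=\bigoplus_Y[D_Y,C_Y]$ does not produce a single commutator for $A$, since the cross-term $[D',C]$ (with $D'=\bigoplus D_Y$) is nonzero on off-leaf entries.
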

We include a short proof of this in the final section of the paper.

\subsubsection*{Restricted Invertibility}
Restricted invertibility refers to the problem of choosing a single submatrix
(as opposed to a paving) of a given matrix with small norm. The first such
result was given by Bourgain and Tzafriri in \cite{BT87}, with improvements and
generalizations by \cite{vershynin2001john, SS12, naor2016restricted, MRI, IF3}.
We prove the following result on choosing a single submatrix which
simultaneously has small norm for a given collection of matrices
$A^{(1)},\ldots,A^{(k)}$.
Our bounds have the merit of
being slightly stronger (by a constant factor) than those obtained by applying paving and choosing the
largest part. The proof is also somewhat simpler than that of Theorem
\ref{thm:paving}.
\begin{theorem}\label{thm:ri}
Given zero diagonal Hermitian contractions $A^{(1)}, \cdots, A^{(k)} \in
M_n(\mathbb{C})$ and $\epsilon > 0$, one may find a subset $\sigma \subset [n]$
of size $n\epsilon^2/6k$ such that, \[\lambda_{max} (P_{\sigma} A_j
P_{\sigma}) < \epsilon, \quad j \in [k].\]
\end{theorem}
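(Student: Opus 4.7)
The plan is to adapt the interlacing families technique of \cite{MSS2}, used for the main theorem above, to the restricted invertibility setting in which we pick a single subset of $[n]$ rather than a partition. Let $p = \epsilon^2/(6k)$, and let $\xi_1,\dots,\xi_n$ be i.i.d.\ Bernoulli($p$) random variables; set $\sigma = \{i : \xi_i = 1\}$ and $D_\xi=\mathrm{diag}(\xi)$. Define the random polynomial
\[
\Phi_\xi(x) \;=\; \prod_{j=1}^k \det\bigl(xI - D_\xi A^{(j)} D_\xi\bigr),
\]
whose largest root equals $\max(0,\max_j \lambda_{\max}(P_\sigma A^{(j)} P_\sigma))$. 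I would show that the family $\{\Phi_\xi : \xi \in \{0,1\}^n\}$ forms an interlacing family with root polynomial $q(x)=\mathbb{E}_\xi[\Phi_\xi(x)]$, and that the largest root of $q$ is at most $\epsilon$.

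The real-rootedness of $q$ (and of the intermediate conditional expectations needed for the interlacing property) is obtained by showing that a multivariate lift of $\Phi_\xi$ is real stable. Using that the nonzero spectra of $XY$ and $YX$ coincide and that $\xi_i^2 = \xi_i$ for Bernoulli variables, one can rewrite
\[
\Phi_\xi(x) \;=\; \det\bigl(xI_{kn} - \widetilde A\,(I_k\otimes D_\xi)\bigr), \quad \widetilde A=\mathrm{diag}(A^{(1)},\dots,A^{(k)}),
\]
and multi-affinize in the $\xi_i$'s to obtain a polynomial $P(x,\xi_1,\dots,\xi_n)$ that is real stable in all its variables, by Borcea--Br{\"a}nd{\'e}n closure arguments analogous to those used for Theorem~\ref{thm:paving}. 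Specializing $\xi_i=p$ recovers $q(x)$, which is therefore real rooted.

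To bound the largest root of $q$ by $\epsilon$, I would apply the multivariate barrier method of MSS to $P(x,\xi)$. Starting from a barrier location just above $0$ and updating the Bernoulli coordinates one at a time, each update $\xi_i \mapsto p$ raises the barrier quantity by roughly $pk$ -- the factor $k$ reflecting that each $\xi_i$ appears in all $k$ blocks of $\widetilde A$ -- so the cumulative contribution over $n$ coordinates is approximately $npk = n\epsilon^2/6$. With a suitable buffer in the initial barrier location, this forces the largest root of $q$ to lie below $\epsilon$. The main obstacle is exactly this quantitative barrier calculation: one must track how a single Bernoulli update propagates through all $k$ matrices simultaneously and extract the sharp constant $6$. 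The bookkeeping is, however, strictly simpler than in the proof of Theorem~\ref{thm:paving}, because only a single block must be controlled.

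Finally, to ensure $|\sigma^\star| \ge n\epsilon^2/(6k)$, I would either use concentration of $|\sigma|$ around $np$ and condition on a lower bound for the cardinality, or work directly with $\sigma$ uniform on $\binom{[n]}{\lceil np\rceil}$ and modify the expected polynomial accordingly; both adjustments preserve the real-stability and barrier analysis. The interlacing families theorem then produces a specific $\sigma^\star$ of the required size satisfying $\lambda_{\max}(P_{\sigma^\star} A^{(j)} P_{\sigma^\star}) < \epsilon$ for every $j\in[k]$.
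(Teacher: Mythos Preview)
Your proposal has a genuine gap at the very first step: the family $\{\Phi_\xi\}$ you define is \emph{not} an interlacing family, and the expected polynomial $q(x)=\mathbb{E}_\xi[\Phi_\xi(x)]$ is not real-rooted in general. Take $n=2$, $k=2$, and $A^{(1)}=A^{(2)}=\bigl(\begin{smallmatrix}0&1\\1&0\end{smallmatrix}\bigr)$. Then $\Phi_\xi(x)=x^4$ for every $\xi\neq(1,1)$, while $\Phi_{(1,1)}(x)=(x^2-1)^2$, so the Bernoulli($p$) expectation is $q(x)=x^4-2p^2x^2+p^2$, whose discriminant in $x^2$ equals $4p^2(p^2-1)<0$ for $0<p<1$; hence $q$ has no real roots at all. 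Equivalently, the half--half convex combination of $\Phi_{(1,0)}$ and $\Phi_{(1,1)}$ is $x^4-x^2+\tfrac12$, again with no real roots, so these two leaves do not have a common interlacing. No multi-affine lift of $\Phi_\xi$ can therefore be real stable in $(x,\xi)$. The underlying obstruction is that $\det\bigl(xI_{kn}-\widetilde A\,(I_k\otimes D)\bigr)$ is not of the Borcea--Br\"and\'en determinantal form: the $A^{(j)}$ are indefinite, and each $\xi_i$ occurs $k$ times, so the closure arguments you invoke do not apply. (There is also a secondary issue: since $\Phi_\xi$ has degree $k$ in each $\xi_i$, specializing a multi-affine lift at $\xi_i^{(j)}=p$ does not recover $\mathbb{E}_\xi[\Phi_\xi]$, because the $k$ copies of $\xi_i$ are perfectly correlated, not independent.)

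This is precisely the obstacle the paper's proof is built to get around. Instead of the product $\prod_j\chi[A^{(j)}(\sigma)]$, the paper works with the \emph{mixed determinantal polynomial} $\chi[\mathbf{A}(\sigma)]$, an average of $\prod_j\chi[A^{(j)}(S_j)]$ over $k$-partitions $S_1\sqcup\cdots\sqcup S_k=\sigma$. That object \emph{is} real-rooted (via the differential formula~(\ref{MDP}), which puts it in genuinely stable form), and the polynomials $\chi[\mathbf{A}_S]$ do form an interlacing family (Proposition~\ref{prop:restrictif}). The cost is that the largest root of $\chi[\mathbf{A}(\sigma)]$ is no longer equal to $\max_j\lambda_{\max}(A^{(j)}(\sigma))$; Theorem~\ref{thm:shrink} supplies the missing inequality, with a factor of $k$. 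The root bound on the average is then obtained not by a barrier argument but by the univariate root-shrinking estimate of Theorem~\ref{thm:mohangauss2}, using the identity $m!\sum_{|S|=m}\chi[\mathbf{A}_S]=\chi^{(m)}[\mathbf{A}]$. Both the interlacing mechanism and the quantitative step therefore differ substantially from what you outline.
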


For two sided restricted invertibility for Hermitian matrices, we get the estimate $n\epsilon^2/12$, 
which improves the estimates of $n\epsilon^2 (\sqrt{2}-1)^4/2 \approx n\epsilon^2/66$ due to Pierre Youssef \cite{YoussefIMRN}, which however, had the important merit of being constructive. 

\subsubsection*{Techniques and Organization}
To introduce our techniques, let us briefly recall how \cite{MSS2} constructs
pavings. First, a one-sided paving is obtained via the method of interlacing
families of polynomials (see e.g. \cite{MSSICM} for an overview), which relies on three facts:
\begin{enumerate}
\item [(I)] The largest eigenvalue of a Hermitian matrix is equal to the largest root of its
characteristic polynomial.
\item [(II)]The characteristic polynomials of the matrices:
$$\sum_{i\le r} P_{X_i}MP_{X_i},\quad {X_1\cup\ldots\cup X_r=[n]}$$
form an {``interlacing family''}, which implies in particular, that their average is real-rooted and
that there exists a polynomial in the family whose largest root is at most that
of the sum.
\item [(III)] The largest root of the average polynomial can be bounded using a
``barrier function argument''.
\end{enumerate}
This method is only able to control the largest eigenvalue of a matrix, and
therefore can only give one-sided bounds. The suboptimal exponential
dependence on $k$ for simultaneously paving $k$ matrices arises from
sequentially applying the one-sided result in a black box way $k$ times.

The main contribution of this paper is a technique for simultaneously carrying out the
interlacing argument {``in parallel''} for $k$ matrices, losing only a 
factor of $k$ in the process. The key idea is to replace the determinant of a
single matrix by a {\em mixed determinant} of $k$ matrices.

\begin{definition}\label{def:mixdet}
Given a $k$ tuple of matrices $\textbf{A} = (A^{(1)}, \cdots, A^{(k)})$ in $M_n(\mathbb{C})$, the mixed determinant is defined as, 
\[D[\textbf{A}] := \sum_{S_1 \amalg \cdots \amalg S_k = [n]} \operatorname{det}[A^{(1)}(S_1)] \cdots \operatorname{det}[A^{(k)}(S_k)].\]
\end{definition}
Closely related to the mixed determinant is the following generalization of the
characteristic polynomial of a single matrix to a tuple of $k$ matrices, which
we call the \emph{mixed determinantal polynomial} (introduced in \cite{MRKSP}) or \emph{MDP} in short. 

\begin{definition}\label{def:mdp}
Given a $k$ tuple of matrices $\textbf{A} = (A^{(1)}, \cdots, A^{(k)})$ in $M_n(\mathbb{C})$, the mixed determinantal polynomial is defined as, 
\[\chi[\textbf{A}](x):= k^{-n}D\left[xI-A^{(1)}, \cdots, xI-A^{(k)}\right].\]

\end{definition}

Closely related polynomials were also used by Borcea and Branden in \cite{BBJohnson} in their solution of Johnson's conjectures. This definition is useful because it turns out that the largest eigenvalues of $k$ matrices are
{\em simultaneously} controlled by the largest root of their MDP up to a factor
of $k$, serving as a substitute for (I).
\begin{theorem}\label{thm:shrink}
 Let $A^{(1)}, \cdots, A^{(k)}$ be zero diagonal real symmetric matrices. Then, 
\[\operatorname{max}_{i \in [k]} \lambda_{max}\,\chi(A^{(i)}) \leq
k\,\lambda_{max} \,\chi[A^{(1)},\ldots,A^{(k)}] .\]
\end{theorem}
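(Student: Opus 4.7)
My plan is to view the mixed determinantal polynomial as an expected characteristic polynomial, identify a baseline specialization of the $k$-tuple on which the claimed bound is tight, and then reduce the general case to that baseline by a pointwise dominance inequality.

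\textbf{Representation and baseline.} Unfolding Definition~\ref{def:mdp} and encoding each ordered partition $(S_1,\dots,S_k)$ of $[n]$ by the map $\pi:[n]\to[k]$ sending $i$ to the unique block containing it, the MDP takes the form
\[
\chi[\mathbf{A}](x) \;=\; \E_\pi\,\det\!\bigl(xI_n - B_\pi\bigr), \qquad B_\pi \;:=\; \sum_{j=1}^k P_{\pi^{-1}(j)} A^{(j)} P_{\pi^{-1}(j)},
\]
with $\pi$ uniform on $[k]^{[n]}$. In particular $\chi[\mathbf{A}]$ is monic of degree $n$, and I would invoke, or reprove via the real stability of the polarized polynomial $D[x_1 I - A^{(1)},\dots,x_k I - A^{(k)}]$ from \cite{MRKSP}, that it is real rooted. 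Specializing to $A^{(j)}=0$ for all $j\neq i_0$, expanding each $\chi(A^{(i_0)}(S))$ in subsets, and applying the binomial identity $\sum_{S\supseteq T}(1/k)^{|S|}((k-1)/k)^{n-|S|} = (1/k)^{|T|}$ collapses the mixed determinant to $\det(xI - A^{(i_0)}/k)$, whose largest root is exactly $\lambda_{\max}(A^{(i_0)})/k$. The claimed bound is thus saturated on this tuple, identifying the natural comparison polynomial.

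\textbf{Reduction.} I would then aim to prove the pointwise dominance
\[
\chi[\mathbf{A}](x) \;\leq\; \det\!\bigl(xI - A^{(i_0)}/k\bigr)
\]
in a right-neighborhood of $x_0 := \lambda_{\max}(A^{(i_0)})/k$. Since the right-hand side vanishes at $x_0$, this would give $\chi[\mathbf{A}](x_0) \leq 0$; combined with monic positivity of $\chi[\mathbf{A}]$ at $+\infty$ and real rootedness, this forces a root of $\chi[\mathbf{A}]$ at or to the right of $x_0$. Maximizing over $i_0 \in [k]$ then recovers the statement of the theorem.

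\textbf{Main obstacle.} The hard step is the pointwise dominance. Using the cycle expansion $\det(xI-M) = \sum_{D\subseteq[n]} x^{n-|D|}\sum_{\tau\in\mathrm{Der}(D)}(-1)^{c(\tau)}\prod_{i\in D} M_{i,\tau(i)}$, valid for zero-diagonal $M$, the MDP admits the same expansion with the per-cycle weight $\prod_{i\in C} A^{(i_0)}_{i,\tau(i)}/k^{|C|}$ (coming from $M=A^{(i_0)}/k$) replaced by the averaged weight $k^{-|C|}\sum_j \prod_{i\in C} A^{(j)}_{i,\tau(i)}$. The difference of the two polynomials is therefore a signed sum over non-$i_0$ cycle colorings whose individual contributions are not uniformly signed, so a naive term-by-term comparison fails. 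My plan is to secure the correct sign by exploiting the real stability of the polarized polynomial in $(x_1,\dots,x_k)$: the desired inequality along the diagonal $x_1=\cdots=x_k=x$ should descend from control on the ``upper zero set'' of the polarized polynomial, adapting the multivariate barrier arguments of Marcus--Spielman--Srivastava (which ordinarily upper-bound largest roots) to the present task of lower-bounding largest roots by a suitable reversal of orientation. Carrying out this adaptation --- while tracking the signs generated by the zero-diagonality of the $A^{(j)}$'s --- is where the real work lies.
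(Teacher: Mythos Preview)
Your baseline computation is correct: $\chi[A^{(i_0)},0,\ldots,0](x)=\det(xI-A^{(i_0)}/k)$, and the deduction that $\chi[\mathbf{A}](x_0)\le 0$ would force a root of $\chi[\mathbf{A}]$ in $[x_0,\infty)$ is sound. The gap is that the pointwise dominance you are aiming for is \emph{false} in general, so no amount of barrier-reversal will rescue it. Take $n=4$, $k=2$, and
\[
A^{(1)}=\epsilon\begin{pmatrix}0&1&0&0\\1&0&0&0\\0&0&0&1\\0&0&1&0\end{pmatrix},\qquad
A^{(2)}=\begin{pmatrix}0&0&1&0\\0&0&0&1\\1&0&0&0\\0&1&0&0\end{pmatrix}.
\]
For $\epsilon=0$ one checks directly that $\chi[0,A^{(2)}](x)=(x^2-1/4)^2$, which equals $1/16>0$ at $x=0$. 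By continuity, for small $\epsilon>0$ one has $\chi[A^{(1)},A^{(2)}](x_0)>0$ at $x_0=\epsilon/2=\lambda_{\max}(A^{(1)})/k$, while $\det(xI-A^{(1)}/2)$ vanishes there; so your proposed inequality $\chi[\mathbf{A}](x)\le\det(xI-A^{(i_0)}/k)$ fails in every neighborhood of $x_0$. The theorem itself of course holds here ($\lambda_{\max}\chi[\mathbf{A}]\approx 1/2\gg \epsilon/2$), but the reason is that $\chi[\mathbf{A}]$ has an \emph{even} number of roots above $x_0$, and sign-at-a-point cannot detect that.

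The paper's route avoids pointwise comparison entirely and instead proves the monotonicity statement $\lambda_{\max}\chi[\mathbf{A},B]\ge\lambda_{\max}\chi[\mathbf{A},0]$ for any zero-diagonal Hermitian $B$, by zeroing out $B$ one row/column at a time. The one-step version introduces the deformation $B_t=D_tBD_t$ with $D_t=\operatorname{diag}(\sqrt{t},1,\ldots,1)$ and observes that $p_t(x):=\chi[\mathbf{A},B_t](x)$ is affine in $t$, hence $p_t=r+ts$ with $r,s$ real-rooted. A short argument then shows $t\mapsto\lambda_{\max}(p_t)$ is monotone; the direction is pinned down by computing that the $x^{n-2}$ coefficient of $p_t$ tends to $-\infty$ (using the zero-diagonal hypothesis to kill the $x^{n-1}$ coefficient), forcing $\lambda_{\max}(p_t)\to\infty$. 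Iterating over rows and then over the matrices $A^{(2)},\ldots,A^{(k)}$ reduces to your correctly-computed baseline. This monotonicity-of-the-top-root idea is the missing ingredient in your plan.
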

We prove Theorem \ref{thm:shrink} in Section \ref{sec:shrink}.

We then show that parts (II) and (III) of the interlacing families argument can
be carried out for MDPs. As in previous works, (II) is established by deriving a
differential formula for MDPs which shows that they and all of their relevant
convex combinations are real-rooted. For joint restricted invertibility, (III)
is derived from the univariate root shrinking estimates of \cite{MRI}; for
multi-paving, the required bound follows by observing that the relevant
expected characteristic polynomials are equivalent (after a change of variables)
to the mixed characteristic polynomials of \cite{MSS2}, and appealing to the bounds
derived there.

The interlacing families for joint restricted invertibility and paving are
analyzed in Sections \ref{sec:ri} and \ref{sec:paving} respectively, proving
Theorems  \ref{thm:ri} and \ref{thm:paving}.

\section{Preliminaries}
\subsubsection*{Submatrices, Pavings, and Derivatives}
We will use the notation $A(S)$ to denote the submatrix of $A$ indexed by rows
and columns in $S$, and $A_{S}$ to denote the matrix with rows and columns in
$S$ removed. For a collection of subsets $\mathbf{S} = S_1\cup\ldots S_k$ we will use
$$A(\mathbf{S}):=\bigoplus_{i} A(S_i)$$
to denote the block matrix containing submatrices indexed by the $S_i$.

Together with the \emph{mixed determinantal polynomial} from the introduction, we will also need a closely related polynomial, 
\begin{definition}
Let $\textbf{A}  = (A^{(1)}, \cdots, A^{(k)})$   be matrices in $M_n(\mathbb{C})$ and let $S \subset [n]$. Define the restricted mixed determinantal polynomial by, 
\[\chi[\textbf{A}_{S} ]:= \chi[A^{(1)}_S, \cdots, A^{(k)}_S],\]
and analogously
\[\chi[\textbf{A}(S) ]:= \chi[A^{(1)}(S), \cdots,
A^{(k)}(S)]\]
\end{definition}

Central to our main result will be characteristic polynomials and mixed determinantal polynomials of pavings.
\begin{definition}
Let $A \in M_n(\mathbb{C})$ and let $\textbf{S}  = \{S_1, \cdots, S_r\}$  be a collection of subsets  of $[n]$, that we will typically take to be a partition of $[n]$. Then, we define the characteristic polynomial of the collection, 
\[\chi[A(\textbf{S})] := \prod_{i = 1}^{r} \chi[A(S_i)] .\]
Also, given matrices $\textbf{A}  = (A^{(1)}, \cdots, A^{(k)})$  in $M_n(\mathbb{C})$, we define the mixed determinantal polynomial of the collection, 
\[\chi[\textbf{A}(\textbf{S})] := \prod_{i = 1}^{r} \chi[\textbf{A}(S_i)] =
\prod_{i = 1}^{r} \chi[A^{(1)}(S_i), \cdots, A^{(k)}(S_i)] ,\]
noting that the paving $\mathbf{S}$ takes precedence over the tuple $\bA$.
\end{definition}

We will use multiset notation to indicate partial derivatives, i.e., for a
multiset $S=(\kappa_1,\ldots,\kappa_n)$ containing elements of $[n]$ with
frequencies $\kappa_i$, we will write
$$\partial^S:=\prod_{i\le n}\partial_{z_i}^{\kappa_i}$$
where the variable $z_i$ will be clear from the context. Multiplying a multiset
by an integer means multiplying the multiplicity of each element in it, for
example $\partial^{k[n]}$ means every element in $[n]$ with multiplicity $k$.
We will frequently deal with doubly indexed arrays of indeterminates
$z^{(i)}_j$. Let $Z_i = \operatorname{diag}(z^{(i)}_{1}, \cdots, z^{(i)}_{n})$ for $i \in [r]$ be a tuple of diagonal matrices of indeterminates.
For a multiset $S$,  we will use the shorthand notation 
\[\partial^{S}_{(i)} = \prod_{j \in S}\partial_{z^{(i)}_j},  i \in [r]\]
to indicate differentiation with respect to the $(i)$ variables.

We will make frequent use of differential formulas for mixed determinantal
polynomials and their variants.
\begin{proposition}
Let $\textbf{A}  = (A^{(1)}, \cdots, A^{(k)})$   be matrices in $M_n(\mathbb{C})$. Then, 
\begin{eqnarray}\label{MDP}\chi[\textbf{A}] = \dfrac{1}{(k!)^n}\partial^{(k-1)[n]}\prod_{j = 1}^{k}\operatorname{det}[Z-A^{(j)}]\mid_{Z = xI}.
\end{eqnarray}
Suppose $S \subset [n]$. Then, 
\begin{eqnarray}\label{restrictedMDP}
\chi[\textbf{A}(S)] =\dfrac{1}{(k!)^{n}} \partial^{k[n]\setminus S}\prod_{j = 1}^{k}\operatorname{det}[Z-A^{(j)}] \mid_{Z = xI}.
\end{eqnarray}
Let $\textbf{S}  = \{S_1, \cdots, S_r\}$  be a partition of $[n]$. Then, the mixed determinantal polynomial of the paving may be written as, 
\begin{eqnarray}\label{MDPPaving}
\chi[\textbf{A}(\textbf{S})] = \dfrac{1}{(k!)^{rn}}\left(\prod_{i \in [r]} \partial_{(i)}^{k[n]\setminus S_i}\right) \prod_{i = 1}^{r} \prod_{j = 1}^{k}\operatorname{det}[Z_i - A^{(j)}]\mid_{Z_1 = \cdots = Z_r = xI}
\end{eqnarray}
\end{proposition}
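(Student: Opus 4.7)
The plan is to derive the three formulas by a straightforward Leibniz-rule computation, exploiting the fact that $\det[Z-A^{(j)}]$ is \emph{multilinear} in the diagonal indeterminates $z_1,\ldots,z_n$. Concretely, since each variable $z_i$ appears in only the $(i,i)$-entry of $Z-A^{(j)}$, the determinant is an affine polynomial in $z_i$, so $\partial_{z_i}^{2} \det[Z-A^{(j)}]=0$, and the single derivative is the principal minor: $\partial^T \det[Z-A^{(j)}]=\det[(Z-A^{(j)})([n]\setminus T)]$ for any subset $T\subseteq[n]$. Evaluating at $Z=xI$ then produces the characteristic polynomial $\chi[A^{(j)}([n]\setminus T)](x)$.

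For formula \eqref{MDP}, I apply the multivariate Leibniz rule to the product $\prod_{j=1}^{k}\det[Z-A^{(j)}]$ under $\partial^{(k-1)[n]}$. Because each factor is affine in each $z_i$, at every index $i$ the $k-1$ derivatives must be distributed so that exactly one factor $j^\ast_i$ receives no derivative and the other $k-1$ factors each receive one. This bijects the nonzero terms with ordered partitions $S_1\amalg\cdots\amalg S_k=[n]$, via $S_j=\{i:j^\ast_i=j\}$, and the multinomial coefficient at each $i$ is $(k-1)!/((0!)(1!)^{k-1})=(k-1)!$, giving an overall factor $((k-1)!)^n$. After evaluation at $Z=xI$ the resulting sum is exactly $D[xI-A^{(1)},\ldots,xI-A^{(k)}]=k^n\chi[\mathbf{A}](x)$, and $((k-1)!)^n k^n=(k!)^n$ matches the prefactor on the left.

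Formula \eqref{restrictedMDP} is the same bookkeeping with two regimes per index. At an index $i\notin S$ we differentiate $k$ times, forcing every factor to be hit exactly once (one term, multinomial coefficient $k!$); at $i\in S$ we differentiate $k-1$ times, so one factor is left out (giving $k$ choices weighted by $(k-1)!$), which corresponds to indices of $S$ being distributed among $k$ ordered blocks. The inner sum at $Z=xI$ is then $\sum_{S_1\amalg\cdots\amalg S_k=S}\prod_j\chi[A^{(j)}(S_j)](x)=k^{|S|}\chi[\mathbf{A}(S)](x)$, and the combinatorial prefactor is $(k!)^{n-|S|}\cdot((k-1)!)^{|S|}\cdot k^{|S|}=(k!)^n$, again as required.

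Finally, \eqref{MDPPaving} follows by applying \eqref{restrictedMDP} independently to each block $S_i$ in the paving with its own variable matrix $Z_i$. Because the product $\prod_i\prod_j\det[Z_i-A^{(j)}]$ decouples across $i$, and the derivatives $\partial_{(i)}$ and $\partial_{(i')}$ commute and act on disjoint variables, multiplying the $r$ single-block identities yields the stated formula with the combined prefactor $(k!)^{-rn}$. The only real work is the combinatorial bookkeeping in the Leibniz step; the main ``obstacle'' is simply keeping the multiset notation, the multinomial coefficients, and the bijection with ordered partitions consistent — there is no analytic content beyond multilinearity.
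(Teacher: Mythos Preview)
Your proof is correct and follows essentially the same approach as the paper: apply the Leibniz rule to $\partial^{(k-1)[n]}$ (resp.\ $\partial^{k[n]\setminus S}$) acting on $\prod_j\det[Z-A^{(j)}]$, use multilinearity of each factor in the $z_i$ to kill all terms except those where each factor is differentiated at most once per index, and identify the surviving sum with the mixed determinant. The only cosmetic difference is that for \eqref{restrictedMDP} the paper first applies \eqref{MDP} to the $|S|\times|S|$ submatrices and then uses $\det[(Z-A^{(j)})(S)]=\partial^{[n]\setminus S}\det[Z-A^{(j)}]$ together with multilinearity to absorb the extra $k([n]\setminus S)$ derivatives, whereas you do the full Leibniz expansion in one shot; the bookkeeping is identical.
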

\begin{proof}
Observe that for $A \in M_n(\mathbb{C})$ and $S \subset [n]$, we have,
\[\chi[A(S)](x) = \partial^{[n]\setminus S} \operatorname{det}[Z-A] \mid_{Z = xI}.\]
By the Leibnitz formula, we have that, 
\begin{eqnarray*}
\partial^{(k-1)[n]}  \left(\prod_{i = 1}^{k} \operatorname{det}[Z-A^{(i)}] \right)&=& [k-1!]^n\sum_{T_1 \amalg \cdots \amalg T_k = (k-1)[n]}\prod_{i = 1}^k \partial^{T_i}\operatorname{det}[Z-A^{(i)}]
\end{eqnarray*}
Given a partition $T_1 \amalg \cdots \amalg T_k = (k-1)[n]$, for the corresponding term above to be non-zero, all the $S_i$ must be subsets of $[n]$, which in turn yields that yields that letting $S_i = [n]\setminus T_i$ for $i \in [k]$, that $S_1 \amalg \cdots \amalg S_k= [n]$. We therefore have that, 
\begin{eqnarray*}
\partial^{(k-1)[n]}  \left(\prod_{i = 1}^{k} \operatorname{det}[Z-A^{(i)}] \right)\mid_{Z = xI}&=& [(k-1)!]^n\sum_{S_1 \amalg \cdots \amalg S_k = [n]}\prod_{i = 1}^k \operatorname{det}[(Z-A^{(i)})(S_i)]\mid_{Z = xI}\\
&=&(k!)^n  k^{-n}\sum_{S_1 \amalg \cdots \amalg S_k = [n]} \chi[A^{(1)}(S_1)] \cdots \chi[A^{(k)}(S_k)]\\
&=& (k!)^n \chi[\textbf{A}]
\end{eqnarray*}
For (\ref{restrictedMDP}), we observe that, 

\begin{eqnarray*}
\chi[\textbf{A}(S)] &=& \chi[A^{(1)}(S), \cdots, A^{(k)}(S)]\\
&=& \dfrac{1}{(k!)^{|S|}}\partial^{(k-1)S}\prod_{j = 1}^{k}\operatorname{det}[(Z-A^{(j)})(S)]\mid_{Z = xI}\\
&=& \dfrac{1}{(k!)^{|S|}}\partial^{(k-1) S}\prod_{j = 1}^{k}\partial^{[n]\setminus S}\operatorname{det}[Z-A^{(j)}]\mid_{Z = xI}\\
&=& \dfrac{1}{(k!)^n}\partial^{k[n]\setminus S}\prod_{j = 1}^{k}\operatorname{det}[Z-A^{(j)}]\mid_{Z = xI},
\end{eqnarray*}
where in the last line we have used the fact that $\det[Z-A^{(j)}]$ is
multilinear in the variables $z_i$.

And finally, (\ref{MDPPaving}) follows from observing that, 
\[\chi[\textbf{A}(\textbf{S})] = \prod_{i = 1}^{r} \chi[\textbf{A}(S_i)].\]
\end{proof}

\subsubsection*{Interlacing, Stability, and Mixed Characteristic Polynomials}
We will use the following facts about real-rooted polynomials.

\begin{definition}\label{def:interlacing}
We say that a real rooted
  polynomial $g(x) = \alpha_{0} \prod_{i=1}^{n-1} (x - \alpha_{i})$ \emph{interlaces} a
  real rooted polynomial 
  $f(x) = \beta_{0} \prod_{i=1}^{n} (x - \beta_{i})$ if
\[
  \beta_{1} \leq \alpha_{1} \leq \beta_{2} \leq \alpha_{2} \leq \dotsb \leq
  \alpha_{n-1}\leq \beta_{n}.
\]
We say that polynomials $f_{1} , \dotsc , f_{k} $ have a \emph{common interlacer}
  if there is a polynomial $g $ so that $g $ interlaces $f_i $ for each $i$. \end{definition}

By a result of Fell \cite{Fell}, having a common interlacer is equivalent to asserting that for every
$f_i$ and $f_j$ all of the convex combinations $\alpha f_i+(1-\alpha)f_j$ are
real-rooted.
We recall the following elementary lemma from \cite{MSS1} that shows the utility of having a common interlacing.

\begin{lemma}\label{lem:interlacing}
Let $f_{1}, \dotsc , f_{k}$ be polynomials of the same degree that are real-rooted and have positive leading coefficients.
Define
\[
  f_{\emptyset} = \sum_{i=1}^{k} f_{i}.
\]
If $f_{1}, \dotsc , f_{k}$ have a common interlacing,
  then
  there exists an $i$ so that the largest root of $f_{i}$ is at most the largest root of
  $f_{\emptyset}$.
\end{lemma}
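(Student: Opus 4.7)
The plan is to exploit the common interlacer to pin down the sign of each $f_i$ on the right tail of the real line, and then run a simple pigeonhole and continuity argument at the largest root of $f_\emptyset$.

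Let $g$ be the common interlacer with largest root $\beta$, and write $f_i = c_i \prod_{j=1}^n (x - \alpha_j^{(i)})$ with $\alpha_1^{(i)} \le \cdots \le \alpha_n^{(i)}$ and $c_i > 0$. The interlacing of $g$ with $f_i$ gives $\alpha_{n-1}^{(i)} \le \beta \le \alpha_n^{(i)}$, so evaluating at $\beta$ produces $n-1$ nonnegative factors and one nonpositive factor; hence $f_i(\beta) \le 0$ for every $i$, and therefore $f_\emptyset(\beta) \le 0$. Since $f_\emptyset$ has positive leading coefficient (a sum of positives), its largest real root $\rho$ exists and satisfies $\rho \ge \beta$.

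I would then analyse the right neighborhood of $\rho$. For every $x > \rho$ we have $f_\emptyset(x) > 0$, so some $i$ yields $f_i(x) > 0$; applying finite pigeonhole to any sequence $x_m \downarrow \rho$ produces a single index $i_0$ with $f_{i_0}(x_m) > 0$ for all $m$. But $f_{i_0}$ has at most one root on $(\beta, \infty)$, namely $\alpha_n^{(i_0)}$, since its remaining roots are confined to $(-\infty, \beta]$ by interlacing with $g$; hence $f_{i_0}$ changes sign on $(\beta, \infty)$ only at $\alpha_n^{(i_0)}$, moving from nonpositive to positive. The existence of $x_m \downarrow \rho$ with $f_{i_0}(x_m) > 0$ therefore forces $\alpha_n^{(i_0)} \le \rho$, which is the required inequality.

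The whole argument is essentially a sign-chase, so there is no substantial obstacle; one just has to keep all inequalities nonstrict to handle edge cases where roots coincide (for instance $\alpha_n^{(i)} = \beta$, or $\rho = \beta$ making $\alpha_n^{(i_0)} = \rho$), and these are absorbed automatically by the $\le$ signs used throughout.
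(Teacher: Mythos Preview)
Your argument is correct. The paper itself does not supply a proof of this lemma; it is quoted from \cite{MSS1} as a standard preliminary fact. Your sign-chase via the common interlacer is exactly the classical argument: all $f_i$ are nonpositive at the largest root $\beta$ of $g$, so the largest root $\rho$ of $f_\emptyset$ lies in $[\beta,\infty)$, and then positivity of $f_\emptyset$ just to the right of $\rho$ forces some $f_{i_0}$ to be positive there, which pins $\alpha_n^{(i_0)}\le\rho$ since $f_{i_0}$ has a unique root on $(\beta,\infty)$.

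One small simplification you might note: the pigeonhole on a sequence $x_m\downarrow\rho$ is not needed. For every $x>\rho$ there is some $i$ with $f_i(x)>0$, and on $(\beta,\infty)$ the condition $f_i(x)>0$ is equivalent to $x>\alpha_n^{(i)}$; hence $\min_i \alpha_n^{(i)} < x$ for all $x>\rho$, giving $\min_i \alpha_n^{(i)}\le\rho$ directly. This avoids having to fix a single index along the sequence, though your version is equally valid.
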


An important class of real-rooted polynomials, which are closely related to
MDPs, are {\em mixed characteristic polynomials}.

\begin{definition}\label{def:mcp} Given independent random vectors
$r_1,\ldots,r_m$ with covariance matrices $A_i:=\E r_ir_i^*$, the {\em mixed
characteristic polynomial} of $A_1,\ldots,A_m$ is defined as:
$$\mu(A_1,\ldots,A_m)[x] := \E \chi\left(\sum_{i=1}^m r_ir_i^*\right).$$
\end{definition}
It was shown in \cite{MSS2} that the expression on the right hand side only depends on the
$A_i$ and not on the details of the $r_i$, so mixed characteristic polynomials
are well-defined. We will exploit the following key property of mixed characteristic
polynomials, also established in \cite{MSS2}:
\begin{theorem}\label{thm:if2} If $A_1,\ldots,A_m$ are positive semidefinite
matrices then $\mu(A_1,\ldots,A_m)[x]$ is real-rooted. If $\sum_{i}A_i=I$ and
$\mathrm{Tr}(A_i)\le\epsilon$ for all $i$ then
$$ \lambda_{max}\,\mu(A_1,\ldots,A_m) < (1+\sqrt{\epsilon})^2.$$
\end{theorem}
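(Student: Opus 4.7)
My plan is to exploit the theory of real-stable polynomials. The first step is to derive an explicit multivariate representation of the mixed characteristic polynomial as a differential operator applied to a determinantal polynomial, namely (verified by expanding the rank-one case and extending by multilinearity in the $z_i$),
\[
\mu[A_1,\ldots,A_m](x) = \prod_{i=1}^m (1-\partial_{z_i})\,\det\!\Bigl(xI + \sum_{i=1}^m z_i A_i\Bigr)\Big|_{z_1=\cdots=z_m=0}.
\]
This reduces the two assertions of the theorem to statements about the right-hand side.

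For real-rootedness, observe that when the $A_i$ are positive semidefinite the polynomial $\det(xI + \sum_i z_i A_i)$ is real stable in $(x,z_1,\ldots,z_m)$ by the standard fact for Hermitian pencils of positive semidefinite matrices. Each operator $(1-\partial_{z_i})$ preserves real stability, since its symbol $1-\xi_i$ has no zeros in the open upper half plane and such operators form a well-known stability-preserving class. Finally, specializing a real variable to a real value preserves real stability in the remaining variables, so the output is a univariate real stable polynomial and hence real rooted.

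For the root bound I would apply the multivariate barrier function method. For a real stable polynomial $p$ and a point $z$ lying strictly above every root (in each coordinate), define the barrier values $\Phi_i^p(z) := \partial_{z_i}\log p(z)$. The key lemma states: if $\Phi_i^p(z) < 1$, then the shifted point $z + \frac{1}{1-\Phi_i^p(z)}e_i$ lies strictly above every root of $(1-\partial_{z_i})p$, and the barrier values in all coordinates at the new point do not exceed those at $z$. Iterating over $i=1,\ldots,m$ starting from $(t,0,\ldots,0)$, one obtains an upper bound on the largest root of $\mu$ in terms of the initial barrier values at $(t,0,\ldots,0)$.

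Under the hypotheses $\sum_i A_i = I$ and $\mathrm{Tr}(A_i)\le\epsilon$, the barrier values at $(t,0,\ldots,0)$ satisfy $\Phi_i(t,0) \le \epsilon/t$ and $\sum_i \Phi_i(t,0)=n/t$; substituting into the barrier shift inequality reduces everything to a scalar condition in $t$. Optimizing $t$ so that the $x$-barrier remains below $1$ throughout the iteration yields precisely $t = (1+\sqrt{\epsilon})^2$ as the resulting bound on $\lambda_{max}\,\mu$. The main obstacle is the root-shift lemma itself: one must show simultaneously that the shifted point stays above all roots of the new polynomial and that none of the barrier values increase. This requires a careful convexity analysis of multivariate barriers based on the Hermite-Biehler style inequalities for stable polynomials, and is the technical heart of the argument.
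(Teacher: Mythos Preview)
This theorem is not proved in the present paper; it is quoted verbatim as a result established in \cite{MSS2}. Your proposal is a correct outline of precisely that original argument---the differential formula for $\mu$, real stability of the determinantal pencil and its preservation under $(1-\partial_{z_i})$, and the multivariate barrier method with the shift lemma---so it agrees with what the paper invokes; the only minor slip is the phrasing about ``the $x$-barrier remaining below $1$'': the actual constraint in the iteration is $\Phi_{z_i}+1/\delta\le 1$, and the optimization over the starting height $t$ and shift $\delta$ (equivalently, working in the variables $y_i=x+z_i$) gives $t=\epsilon+\sqrt{\epsilon}$, $\delta=1+\sqrt{\epsilon}$, hence the bound $t+\delta=(1+\sqrt{\epsilon})^2$.
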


Recall that a polynomial $p(z_1,\ldots,z_n)$ is {\em real stable} if its
coefficients are real and it has no zeros with $\mathrm{Im}(z_i)>0$ for all $i$.
Note that a univariate real stable polynomial is real-rooted, and that a degree
one polynomial with coefficients of the same sign is always real stable.

We will rely on the following theorem of Borcea and Branden, which characterizes
differential operators preserving stability.
\begin{theorem}[Theorem 1.3 in \cite{BBWeylAlgebra}] \label{thm:bbweyl}
Let $T : \R[z_1, \dots , z_n ] \to  \R[z_1, \dots , z_n ]$ be an operator of the form
\[
T = \sum_{\alpha , \beta \in \mathbb{N}^n} c_{\alpha, \beta} z^\alpha \partial^{\beta}
\]
where 
 $c_{\alpha, \beta} \in \mathbb{R}$ and $c_{\alpha ,\beta}$
  is zero for all but finitely many terms.
Define the {\em symbol} of $T$ to be
\[ 
F_T(z,w) := \sum_{\alpha, \beta} c_{\alpha, \beta} z^{\alpha}w^{\beta}.
\]
Then $T$ preserves real stability if and only if $F_T(z, -w)$ is real stable.
\end{theorem}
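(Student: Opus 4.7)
The plan is to establish both directions of this Borcea--Branden characterization. For sufficiency I would reduce to the Lieb--Sokal lemma via polarization, and for necessity I would recover the symbol $F_T$ by applying $T$ to an explicit family of stable test polynomials and passing to a limit controlled by Hurwitz's theorem.

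For sufficiency, assuming $F_T(z,-w)$ is real stable, the engine is the Lieb--Sokal lemma: if $Q(z,w)$ is real stable and of degree at most one in each $w_i$, then the operator $Q(z,-\partial_z)$ sends real stable polynomials in $z$ to real stable polynomials (or to zero). To extend this to arbitrary differential orders $\beta_i$ present in $T$, I would polarize. For each variable $z_i$, introduce enlarged auxiliary variables $z_i^{(1)},\ldots,z_i^{(d_i)}$ and rewrite, on inputs symmetrized in these auxiliary copies, the pure derivative $\partial_{z_i}^{\beta_i}$ as a product of first-order derivatives in the auxiliary variables. This turns $T$ into a multi-affine differential operator in the enlarged variable set, whose polarized symbol inherits stability from that of $F_T(z,-w)$. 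The multi-affine Lieb--Sokal lemma then applies, and restricting the auxiliary variables to a common diagonal recovers $T(p)$ and preserves stability.

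For necessity, I would test $T$ on the real stable family
\[
p_N(z,w)=\prod_{i=1}^n\Bigl(1+\tfrac{z_iw_i}{N}\Bigr)^N,
\]
which is real stable in $(z,w)$ as a product of linear factors with nonnegative coefficients. Since $T$ acts only in the $z$-variables, $T_z p_N(z,w)$ is real stable in $(z,w)$ for every $N$. A direct Leibniz computation combined with the classical limit $(1+x/N)^N\to e^x$ yields, uniformly on compact subsets of $\mathbb{C}^{2n}$, that
\[
T_z p_N(z,w)\longrightarrow F_T(z,w)\cdot e^{\sum_i z_iw_i}.
\]
Hurwitz's theorem on locally uniform limits of stable polynomials implies that this limit is real stable or identically zero; dividing off the nowhere-vanishing stable exponential factor and then substituting $w\mapsto -w$ yields the desired stability of $F_T(z,-w)$.

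The main obstacle will be the limit argument in the necessity direction: one must guarantee that the convergence is uniform on compacts of $\mathbb{C}^{2n}$ so that Hurwitz applies, verify that the exponential factor $e^{\sum z_iw_i}$ can be cleanly stripped while preserving stability, and check that the sign substitution $w\mapsto -w$ correctly aligns with the upper-half-plane convention used to define real stability. The sufficiency direction is technically more straightforward, but the polarization step requires care to show that symmetrization in the auxiliary variables and the final diagonalization commute appropriately with the application of Lieb--Sokal.
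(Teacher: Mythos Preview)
The paper does not prove this theorem; it is quoted verbatim as Theorem~1.3 of Borcea--Br\"and\'en \cite{BBWeylAlgebra} and used as a black box (in the proof of Lemma~\ref{MDPInterlacing}). There is therefore no proof in the paper to compare your proposal against.

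That said, your sketch of the necessity direction contains a genuine error. You assert that
\[
p_N(z,w)=\prod_{i=1}^n\Bigl(1+\tfrac{z_iw_i}{N}\Bigr)^N
\]
is real stable in $(z,w)$ ``as a product of linear factors with nonnegative coefficients.'' But $1+z_iw_i/N$ is \emph{bilinear}, not linear, and it is \emph{not} jointly stable: taking $z_i=w_i=i\sqrt{N}$, both in the open upper half-plane, gives $1+z_iw_i/N=0$. So you cannot deduce that $T_z p_N(z,w)$ is stable in $(z,w)$ from the hypothesis that $T$ preserves stability. The subsequent ``substitute $w\mapsto -w$'' step is also mis-stated: replacing $w$ by $-w$ in a polynomial stable on the product of upper half-planes does not produce another such polynomial.

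The argument is easily repaired along the lines Borcea--Br\"and\'en actually use. Fix $w$ with $\mathrm{Im}(w_i)<0$ for every $i$. Then $1+z_iw_i/N$ is a genuine degree-one polynomial in $z_i$ whose unique root $-N/w_i$ lies in the lower half-plane, so $p_N(\cdot,w)$ is stable \emph{in $z$ alone}. Applying $T$ (which by hypothesis preserves stability in $z$), passing to the locally uniform limit, and stripping the nonvanishing factor $e^{\sum z_iw_i}$ shows $F_T(z,w)\neq 0$ whenever all $\mathrm{Im}(z_i)>0$ and all $\mathrm{Im}(w_i)<0$. That is precisely the statement that $F_T(z,-w)$ is real stable, with no further sign change needed. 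Your sufficiency sketch via polarization and the Lieb--Sokal lemma is the standard route and is fine in outline.
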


Finally, we will use the elementary symmetric functions 
$$e_i(\lambda_1,\ldots,\lambda_n) = \sum_{|S|=i}\prod_{j\in S}\lambda_j,$$
and the notation $e_i(A)$ to denote the $i^{th}$ coefficient of the characteristic
polynomial of a matrix $A$, which is equal to $e_i$ of its eigenvalues.

\section{Joint Control of The Largest Root}\label{sec:shrink}
Theorem \ref{thm:shrink} is a consequence of the following monotonicity result.
\begin{theorem}\label{thm:matrixroot}
Let $\textbf{A} = (A^{(1)}, \cdots, A^{(k)})$ be a $k$ tuple of Hermitian
matrices and let $B$ be a zero diagonal Hermitian matrix. Then, 
\[\lambda_{max} \chi[\textbf{A}, B] \geq \lambda_{max} \chi[\textbf{A}, 0].\]
\end{theorem}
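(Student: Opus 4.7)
My strategy is to prove the theorem via an interpolation argument. Define $p_t(x):=\chi[\mathbf{A},tB](x)$ for $t\in\mathbb{R}$, so $p_0=\chi[\mathbf{A},0]$ and $p_1=\chi[\mathbf{A},B]$, and I aim to show that $\lambda_{\max}(p_t)$ is nondecreasing in $t\ge 0$. The case $t=1$ then gives the theorem. Each $p_t$ is real-rooted: since $tB$ is Hermitian, $\det(Z-tB)$ is real stable in $Z=\operatorname{diag}(z_1,\ldots,z_n)$, so is the product $\prod_j\det(Z-A^{(j)})\cdot\det(Z-tB)$, and applying $\partial^{k[n]}$ preserves real stability by Theorem~\ref{thm:bbweyl}; specialization to $Z=xI$ then preserves real-rootedness.

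Using Leibniz applied to formula (\ref{MDP}) together with the multilinearity of $\det(Z-tB)$ in the $z_i$'s, one obtains the explicit expansion
\[p_t(x)\;=\;\frac{1}{(k+1)^n}\sum_{T\subseteq[n]} k^{|T|}\,\chi[\mathbf{A}(T)](x)\,\chi_{tB_T}(x),\]
where $B_T$ denotes the principal submatrix of $B$ on $[n]\setminus T$. The zero-diagonal hypothesis on $B$ propagates to every $B_T$, so $\operatorname{tr}(tB_T)=0$, and for $x\ge\lambda_{\max}(tB_T)$ an AM--GM inequality applied to the non-negative quantities $1-t\lambda_i(B_T)/x$ gives $\chi_{tB_T}(x)=\prod_i(x-t\lambda_i(B_T))\le x^{n-|T|}$. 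In particular $\chi_{tB_T}(x)-x^{n-|T|}\le 0$ whenever $x$ dominates the spectrum of $tB_T$, which is the ``first order'' sign input that drives the monotonicity.

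The main obstacle is converting these sign observations into the monotonicity of $\lambda_{\max}$: the comparison point $x^*=\lambda_{\max}(p_0)$ need not lie above $\lambda_{\max}(B_T)$ for every $T$, and the factors $\chi[\mathbf{A}(T)](x^*)$ can have either sign, so the difference $p_t(x^*)-p_0(x^*)$ is not obviously of a definite sign term-by-term. My backup plan is to establish that the family $\{p_t\}_{t\in[0,1]}$ forms an interlacing family via Fell's criterion: this amounts to verifying real-rootedness of arbitrary convex combinations $\alpha p_s+(1-\alpha)p_t$, which by the differential formula reduces to a bivariate real-stability statement in $(x,t)$, to be attacked via Theorem~\ref{thm:bbweyl}. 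Combined with Lemma~\ref{lem:interlacing} and a continuity argument in $t$, this interlacing structure yields the monotonicity $\lambda_{\max}(p_t)\ge\lambda_{\max}(p_0)$. The principal technical hurdle is that $tB$ is neither positive nor negative semidefinite, so $\det(Z-tB)$ is not jointly real stable in $(Z,t)$ without modification; handling this likely requires splitting $B=B_+-B_-$ into positive and negative parts and treating each separately, or producing an auxiliary polynomial whose bivariate stability encodes the same root information.
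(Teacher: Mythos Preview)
Your proposal is not a complete proof; it is an outline with two self-acknowledged gaps (the AM--GM sign analysis does not close, and the bivariate stability in $(x,t)$ fails because $B$ is indefinite), and neither gap is actually filled. So let me point to the idea you are missing, which is exactly what the paper uses.

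The difficulty with your interpolation $p_t=\chi[\mathbf{A},tB]$ is that $p_t(x)$ has high degree in $t$, so the equation $p_t(z)=0$ for fixed $z$ can have many solutions in $t$; this is why you cannot rule out non-monotonicity of $t\mapsto\lambda_{\max}(p_t)$ by a simple coincidence argument, and why you are forced toward a joint-stability statement that is false in the form you need. The paper sidesteps this by zeroing out $B$ \emph{one row/column at a time}: set $B_t=D_tBD_t$ with $D_t=\operatorname{diag}(\sqrt{t},1,\ldots,1)$, so that $p_t:=\chi[\mathbf{A},B_t]$ is \emph{linear} in $t$, say $p_t=r+ts$. Now if $\lambda_{\max}(p_t)$ were non-monotone, continuity would give $t_1\neq t_2$ with $\lambda_{\max}(p_{t_1})=\lambda_{\max}(p_{t_2})=z$, whence $r(z)+t_1s(z)=r(z)+t_2s(z)=0$ forces $r(z)=s(z)=0$; after removing common factors this is a contradiction. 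The direction of monotonicity is then read off from the behavior as $t\to\infty$: the zero-diagonal hypothesis gives $c_1(t)\equiv 0$, while $c_2(t)\to-\infty$ (a one-line computation using that only the first row/column of $B$ carries the $t$), so $\sum\lambda_i(t)^2\to\infty$ with mean zero, forcing $\lambda_{\max}(p_t)\to\infty$. Iterating over the $n$ coordinates gives $\lambda_{\max}\chi[\mathbf{A},B]\ge\lambda_{\max}\chi[\mathbf{A},0]$.

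In short: replace global scaling of $B$ by coordinate-wise scaling. Linearity in the deformation parameter is what makes the monotonicity argument elementary and avoids the joint-stability obstruction you identified.
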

We will deduce this by setting the rows and columns of $B$ to zero one by one. Let
$B_{(S)}$ denote the $n \times n$ matrix
gotten from $B$ by setting entries in rows and columns from $S\subseteq [n]$ to zero.

\begin{proposition}\label{principal}
Let $\textbf{A} = (A^{(1)}, \cdots, A^{(k)})$ and $B$ be zero diagonal $n\times
n$ Hermitian matrices. Then, 
\[\lambda_{max} \chi[\textbf{A}, B] \geq \lambda_{max} \chi[\textbf{A}, B_{(1)}].\]
\end{proposition}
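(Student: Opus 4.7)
The plan is to interpolate linearly between $B_{(1)}$ and $B$ via a scalar parameter. Set $E := B - B_{(1)}$; because $B_{11}=0$, this has the rank-$2$ form $E = e_1 v^* + v e_1^*$, where $v = (B_{21},\ldots,B_{n1})^T$ has first entry zero. Define the zero-diagonal Hermitian family $B(t) := B_{(1)} + tE$, so that $B(0) = B_{(1)}$ and $B(1) = B$. Applying the matrix determinant lemma to the rank-$2$ update $Z - B(t) = (Z - B_{(1)}) - tE$, and exploiting that $(Z - B_{(1)})^{-1} e_1 = e_1/z_1$ (since $B_{(1)}$ has zero first row and column), I compute
\[
\operatorname{det}(Z - B(t)) \;=\; \operatorname{det}(Z - B_{(1)}) \;-\; t^2 \, \tilde v^{\,*} \operatorname{adj}(W_2)\, \tilde v,
\]
where $W_2 := (Z - B_{(1)})(\{2,\ldots,n\})$ and $\tilde v := (v_2,\ldots,v_n)^T$. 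The crucial feature is the pure $t^2$-dependence.

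Plugging into the differential formula (\ref{MDP}) and setting $s := t^2$, this gives the affine identity
\[
p_s(x) \;:=\; \chi[\mathbf{A}, B(\sqrt{s})](x) \;=\; \chi[\mathbf{A}, B_{(1)}](x) \;-\; s\, R(x)
\]
for an explicit polynomial $R(x)$. Since each $B(\sqrt{s})$ is zero-diagonal Hermitian, $p_s$ is real-rooted for every $s \geq 0$, as the MDP of a $(k+1)$-tuple of zero-diagonal Hermitians. A Schur-complement expansion gives
\[
(k+1)^n R(x) \;=\; \sum_{S_1 \sqcup \cdots \sqcup S_k \sqcup T' = [n] \setminus \{1\}} \prod_{j} \operatorname{det}[(xI - A^{(j)})(S_j)] \cdot B_{T',1}^{*}\, \operatorname{adj}[(xI - B)(T')]\, B_{T',1},
\]
and a short degree computation, using that zero diagonality kills the $x^{n-1}$ coefficient of both $\chi[\mathbf{A}, B]$ and $\chi[\mathbf{A}, B_{(1)}]$, yields $\deg R \leq n - 2$ with leading coefficient $(k+1)^{-2}\|\tilde v\|^2 \geq 0$.

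Writing $\lambda_0 := \lambda_{\max}\chi[\mathbf{A}, B_{(1)}]$, the proposition reduces to the pointwise estimate $R(\lambda_0) \geq 0$: monicity of $\chi[\mathbf{A}, B]$ then forces $\chi[\mathbf{A}, B](\lambda_0) = -R(\lambda_0) \leq 0$, whence $\lambda_{\max}\chi[\mathbf{A}, B] \geq \lambda_0$. Establishing this positivity is the main obstacle, and I would argue by contradiction. If $R(\lambda_0) < 0$, then $p_s(\lambda_0) = -sR(\lambda_0) > 0$ for every $s > 0$, so $\lambda_{\max}(p_s) < \lambda_0$ throughout $s > 0$. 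On the other hand, since $R$ has positive leading coefficient and degree at most $n - 2$, the leading-order balance $x^n \sim sR(x)$ forces $p_s$ to be strictly negative on a window around $x \sim s^{1/(n - \deg R)}$ and positive at infinity; combined with real-rootedness of $p_s$ for every $s \geq 0$, this yields $\lambda_{\max}(p_s) \to \infty$ as $s \to \infty$. By continuity of the largest root in $s$, $\lambda_{\max}(p_s)$ must cross $\lambda_0$ at some $s^* > 0$, giving $p_{s^*}(\lambda_0) = 0$ and contradicting $p_{s^*}(\lambda_0) > 0$. (If $\tilde v = 0$ then $B = B_{(1)}$ and the proposition is trivial.)
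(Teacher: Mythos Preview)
Your approach is essentially the paper's: both interpolate the first row and column of $B$ by a scalar (your $B(t)$ coincides with the paper's $D_tBD_t$ once one identifies your $t^2$ with their $t$), observe that the resulting MDP is affine in the parameter, use real-rootedness of the MDP for all parameter values, and show that the largest root tends to $+\infty$. Your Schur-complement computation of the affine form is more explicit than the paper's, but the ingredients are the same.

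There is, however, a gap in your closing contradiction. From $p_s(\lambda_0)>0$ for every $s>0$ you infer $\lambda_{\max}(p_s)<\lambda_0$ for every $s>0$; this is not justified. A monic real-rooted polynomial that is positive at $\lambda_0$ and at $+\infty$ may have an even (but nonzero) number of roots in $(\lambda_0,\infty)$. If $\lambda_0$ happens to be a multiple root of $p_0$, nothing you have written rules out some of the nearby roots of $p_s$ lying above $\lambda_0$ for small $s$. Without that inference, your crossing argument does not get off the ground: you have $\lambda_{\max}(p_0)=\lambda_0$ and $\lambda_{\max}(p_s)\to\infty$, but the intermediate value theorem does not by itself produce an $s^*>0$ with $\lambda_{\max}(p_{s^*})=\lambda_0$.

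The repair is easy and in fact makes the detour through ``$R(\lambda_0)\ge 0$'' unnecessary. If $R(\lambda_0)=0$ then $p_1(\lambda_0)=0$ and you are done. Otherwise $p_s(\lambda_0)\neq 0$ for all $s>0$, so the continuous function $s\mapsto\lambda_{\max}(p_s)$ never takes the value $\lambda_0$ on $(0,\infty)$; by connectedness it lies entirely on one side of $\lambda_0$, and since it tends to $+\infty$ it must satisfy $\lambda_{\max}(p_s)>\lambda_0$ for all $s>0$, in particular at $s=1$. This is exactly the paper's argument, which phrases it as monotonicity of $s\mapsto\lambda_{\max}(p_s)$: if $\lambda_{\max}(p_{s_1})=\lambda_{\max}(p_{s_2})=z$ with $s_1\neq s_2$, then $p_0(z)=R(z)=0$, which (after factoring out the gcd of $p_0$ and $R$) is impossible.
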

\begin{proof}
Let $B_{t}$ denote the matrix $B_t = D_t B D_t$ where $D_t =
\operatorname{diag}(\sqrt{t}, 1, \cdots, 1)$. Since 
$$p_t:=\chi[\textbf{A}, B_{t}]$$
is an MDP it is real-rooted for all $t\ge 0$.  
Let $f:[0,\infty)\rightarrow \R$ by $$f(t):=\lambda_{max}(p_t).$$ We will show
that $f$ is monotone increasing in $t$.

Observe that $p_t$ is a polynomial of degree at most one in $t$ since by Definition \ref{def:mixdet}
it is a sum over products of characteristic polynomials of principal submatrices
of the $A^{(i)}$ and $B$, which either contain both
the first row and column of $B_t$ (introducing terms containing a factor of $t$) or
contain neither.  Thus, we may write
$p_t(x)=r(x)+ts(x)$
for some polynomials $r,s$. Moreover, $r(x)=p_0(x)$ is real-rooted
and $s(x)=\lim_{t\rightarrow \infty}p_t(x)$ must be real-rooted or identically zero by
Hurwitz's theorem; let us assume the former case since otherwise we are done.

Since the largest root of a real-rooted polynomial is continuous in its
coefficients, $f$ is continuous. Assume for a moment that $r(x)$ and $s(x)$ have no
common roots. If $f$ is not monotone, then by continuity we can choose two points
$t_1\neq t_2$ such that $f(t_1)=f(t_2)=z$. This implies that 
$$ r(z)+t_1s(z)=r(z)+t_2s(z)=0,$$
whence $s(z)=0$, and consequently $r(z)=0$, contradicting that $r(x)$ and $s(x)$ have
no common roots. 

To handle the general case, let $r(x)=q(x)r_1(x)$ and $s(x)=q(x)s_1(x)$ where
$r_1$ and $s_1$ have no common factors. Observe that every root of $q$ is a root
of $p_t$ for all $t$. Thus, 
$$f(t)=\lambda_{max}(r(x)+ts(x)) = \max\{M, \lambda_{max}(r_1(x)+ts_1(x))\},$$
where $M$ is the largest root of $q$, which is monotone by the preceding argument.

We now show that $f$ is nondecreasing in $t$ by examining its behavior at
$\infty$. Assume that the first row of $B$ is nonzero since otherwise we are done. Let 
$$p_t(x)=\sum_{i=0}^n (-1)^i c_i(t)x^{n-i},$$
with $c_0(t)=1$ and let $\lambda_i(t)$ denote the roots of $p_t(x)$. Recall that
$$ p_t(x)=\E_{S_1,\ldots,S_{k+1}}\left[ \chi(A^{(1)}_{S_1})\ldots\chi(A^{(k)}_{S_k})\cdot \chi((B_t)_{S_{k+1}})\right],$$
where the expectation is over a uniformly random $(k+1)-$wise partition of $[n]$. Since every
matrix that appears above has zero diagonal and consequently zero trace, the second coefficient of each
characteristic polynomial that appears in the above sum is zero, which implies
that $c_1(t)=0$ for all $t\ge 0$, i.e.
\begin{equation} \label{eqn:mean} \sum_{i=1}^n \lambda_i(t)=0\qquad t\ge
0.\end{equation}
 On the other hand, observe that 
\begin{align*}
c_2(t) &= c_2(0)+\P[1\in S_{k+1}] \E \left[\sum_{j\in S_{k+1}\setminus\{1\}}
\det((B_t)_{1j}\bigg|1\in S_{k+1}\right]
\\&= c_2(0)+\P[1\in S_{k+1}]\E\left[\sum_{j=2}^n t\cdot (-|B(1,j)|^2)\cdot \{j \in
S_{k+1}\} \bigg| 1\in S_{k+1}\right]
\\&= c_2(0)-t\cdot\sum_{j=2}^n |B(1,j)^2|\cdot\P[1\in S_{k+1}\land j\in S_{k+1}].
\\&= c_2(0)-\frac{t}{(k+1)^2}\sum_{j=2}^n |B(1,j)|^2.
\end{align*}
Since the first row of $B$ is nonzero this implies that $c_2(t)\rightarrow -\infty$ as
$t\rightarrow\infty$. Combining this with \eqref{eqn:mean}, we have
$$\sum_{i=1}^n \lambda_i^2(t) = c_1(t)^2 -2c_2(t)\rightarrow\infty,$$
so the roots of $p_t$ must be unbounded in $t$. Since the mean of the roots is always zero, we conclude
that $\lambda_{max}(p_t)\rightarrow\infty$ and $\lambda_{min}(p_t)\rightarrow
-\infty$ as $t\rightarrow\infty$.  Thus, $f(t)$ must be monotone increasing in
$t$, and $f(1)\ge f(0)$, as desired.
\end{proof}

Theorem \ref{thm:matrixroot} now follows by a simple inductive argument. 

\begin{proof}
Sequentially applying proposition (\ref{principal}) yields that, 
\[\lambda_{max} \chi[\textbf{A}, B_{([1, k])}] \leq \lambda_{max} \chi[\textbf{A}, B_{([1, k-1])}], \quad k \in [n] .\]
We conclude that, 
\[\lambda_{max} \chi[\textbf{A}, B] \geq \lambda_{max} \chi[\textbf{A}, 0].\]
\end{proof}

To obtain Theorem \ref{thm:shrink} we iterate Theorem \ref{thm:matrixroot} $k-1$
times, yielding
$$ \lambda_{max}\chi[A^{(1)},A^{(2)},\ldots,A^{(k)}]\ge
\lambda_{max}\chi(A^{(1)},\overbrace{0,\ldots,0}^{k-1}).$$
Letting $A=A^{(1)}$ to ease notation, we now observe that
\begin{align*}
\chi[A,\overbrace{0,\ldots,0}^k](x) &= \E_{S_1,\ldots,S_{k}} \chi(A_{S_1})\cdot x^{n-|S_1|}
\\&= \E_{S_1,\ldots,S_{k}} x^{n-|S_1|}\sum_{j=0}^{|S_1|} x^{|S_1|-j}(-1)^j
\sum_{|T|=j} \{T\subseteq S_1\}\det(A_T)
\\&= \sum_{j=0}^n x^{n-j}(-1)^j\sum_{|T|=j}\det(A_T)\P_{S_1,\ldots,S_k}[T\subseteq S_1]
\\&= \sum_{j=0}^n x^{n-j}(-1)^j (1/k)^j\sum_{|T|=j}\det(A_T)
\\&= k^{-n}\sum_{j=0}^n (kx)^{n-j}(-1)^j e_j(A)
\\&=k^{-n}\chi[A](kx),
\end{align*}
whence $\lambda_{max}\chi[A^{(1)}]\le k\lambda_{max} \chi[A^{(1)},\ldots,A^{(k)}],$ as
desired.

\section{Joint Restricted Invertibility}\label{sec:ri}
In this section we prove theorem \ref{thm:ri} by analyzing the expected MDP when we
choose a common submatrix of a $k-$tuple of matrices. We begin by deriving the
a useful formula for this polynomial. Recall that $A_{S}$ denotes the
submatrix of $A$ with columns in $S$ {\em removed}, and $\chi_{S}$ denotes
the correspondingly restricted characteristic polynomial.
\begin{lemma}
Given a $k$-tuple of matrices $\textbf{A}  = (A^{(1)},  \cdots, A^{(k)})$ in
$M_n(\mathbb{C})$, we have that for any $m \leq n$, \[m!\,\sum_{|S| = m}
\chi[\textbf{A}_S ]= \chi^{(m)}[\textbf{A} ].\] \end{lemma}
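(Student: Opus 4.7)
The plan is to derive the identity by combining the differential formula (\ref{MDP}) for $\chi[\textbf{A}]$ with the chain rule in the variable $x$, and then to recognise the resulting sum as a sum of restricted MDPs via (\ref{restrictedMDP}).

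Concretely, I would begin from
\[
\chi[\textbf{A}](x) \;=\; \frac{1}{(k!)^n}\,\partial^{(k-1)[n]}\!\prod_{j=1}^{k}\det[Z-A^{(j)}]\,\Big|_{Z=xI},
\]
and differentiate $m$ times with respect to $x$. Since $Z=xI=\operatorname{diag}(x,\dots,x)$, the chain rule gives $\tfrac{d}{dx} = \sum_{i=1}^n \partial_{z_i}$ after restriction to the diagonal, so
\[
\chi^{(m)}[\textbf{A}](x) \;=\; \frac{1}{(k!)^n}\!\left(\sum_{i=1}^n \partial_{z_i}\right)^{\!m}\!\partial^{(k-1)[n]}\!\prod_{j=1}^{k}\det[Z-A^{(j)}]\,\Big|_{Z=xI}.
\]

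The key observation is a degree count. Each factor $\det[Z-A^{(j)}]$ is of degree one in every $z_i$, so the full product $\prod_j \det[Z-A^{(j)}]$ has degree exactly $k$ in each $z_i$. Hence after applying $\partial^{(k-1)[n]}$ the resulting polynomial is of degree at most one in each $z_i$, and in the multinomial expansion of $\bigl(\sum_i \partial_{z_i}\bigr)^m$ only the multi-indices $\alpha \in \{0,1\}^n$ with $|\alpha|=m$ survive. Each such $\alpha$ is the indicator of a unique subset $S\subseteq [n]$ of size $m$ and appears with multinomial coefficient $m!$. Therefore
\[
\chi^{(m)}[\textbf{A}](x) \;=\; \frac{m!}{(k!)^n}\sum_{|S|=m}\partial^{S}\partial^{(k-1)[n]}\!\prod_{j=1}^{k}\det[Z-A^{(j)}]\,\Big|_{Z=xI}.
\]

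To finish, I would combine the two multiset derivatives. For a fixed $S$ with $|S|=m$, setting $T:=[n]\setminus S$, each variable $z_i$ with $i\in S$ is differentiated $k$ times and each $i\in T$ is differentiated $k-1$ times, i.e.\ the combined operator is exactly $\partial^{k[n]\setminus T}$. By (\ref{restrictedMDP}) this gives $(k!)^n\,\chi[\textbf{A}(T)] = (k!)^n\,\chi[\textbf{A}_{S}]$, since $A_S = A([n]\setminus S)$ in the paper's notation. Summing over $S$ yields the claimed identity. The only subtle step is the multiset bookkeeping in the last line—in particular, verifying that higher-order contributions from $(\sum_i \partial_{z_i})^m$ vanish for the right reason (multilinearity after $\partial^{(k-1)[n]}$), and that the combined derivative hits the multiplicities demanded by (\ref{restrictedMDP}).
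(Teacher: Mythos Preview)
Your proof is correct and follows essentially the same approach as the paper. Both arguments hinge on the same two ingredients: the differential formulas (\ref{MDP}) and (\ref{restrictedMDP}), and the observation that after applying $\partial^{(k-1)[n]}$ the remaining polynomial is multiaffine, so that $\bigl(\sum_i\partial_{z_i}\bigr)^m$ reduces to $m!\,e_m(\partial_1,\dots,\partial_n)=m!\sum_{|S|=m}\partial^S$. The only cosmetic difference is direction: the paper starts from $\sum_{|S|=m}\chi[\textbf{A}_S]$ and arrives at $\chi^{(m)}[\textbf{A}]$, while you start from $\chi^{(m)}[\textbf{A}]$ and unpack it via the chain rule.
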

 \begin{proof}
We use the fact that for any $A \in M_n(\mathbb{C})$, 
\[\operatorname{det}[(Z-A)_{S}] = \partial^{S}\operatorname{det}[Z-A].\]
Using (\ref{restrictedMDP}), we have, 
\begin{eqnarray}\label{chis}
\nonumber  \chi[\textbf{A}_S] &=&  \dfrac{1}{(k!)^n}\partial^{(k-1)[n]} \partial^{S} \left(\prod_{i = 1}^{k} \operatorname{det}[Z-A^{(i)}]\right)\mid_{Z = xI}
\end{eqnarray}
Therefore,
\begin{eqnarray*}
\sum_{|S| = m} \chi[\textbf{A}_S ] &=&\dfrac{1}{(k!)^n}\left[ \sum_{|S| = m} \partial^{S}\right] \partial^{(k-1)[n]}\left(\prod_{i = 1}^{k} \operatorname{det}[Z-A^{(i)}]\right) \mid_{Z = xI},\\
&=& \dfrac{1}{(k!)^n} e_{m}(\partial_1, \cdots, \partial_n)\,\partial^{(k-1)[n]} \left(\prod_{i = 1}^{k} \operatorname{det}[Z-A^{(i)}]\right) \mid_{Z = xI}.
\end{eqnarray*}

The polynomial, 
\[\partial^{(k-1)[n]}\left(\prod_{i = 1}^{k} \operatorname{det}[Z-A^{(i)}]\right),\]
is multiaffine. For any multiaffine polynomial $p(z_1,\cdots, z_n)$ and any $m \leq n$, it is easy to see that, 
\[m!\,\,e_{m}(\partial_1, \cdots, \partial_n)\, p(z_1, \cdots, z_n) \mid_{Z = xI} \,\,= \,\,p^{(m)}(x, \cdots, x).\]

We conclude that, 
\[m!\,\sum_{|S| = m} \chi[\textbf{A}_S ]= \chi^{(m)}[\textbf{A} ].\]
\end{proof}

We now show that the roots of the expected MDP can be used to control the roots
of the best MDP over all $S$, via an interlacing family argument.
\begin{proposition}\label{prop:restrictif}
Let $A^{(1)}, \cdots, A^{(k)}$ be Hermitian matrices in $M_n(\mathbb{C})$. Then,
for any $m \leq n$, there is a subset $S \subset [n]$ of size $m$ such that,
\[\lambda_{max} \,\left(\chi[\textbf{A}_S]\right)\leq \lambda_{max}
\left(\sum_{|S| = m} \chi[\textbf{A}_S]\right) =
\lambda_{max}\left(\chi^{(m)}[\textbf{A} ]\right).\] \end{proposition}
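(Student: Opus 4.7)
The plan is to apply the interlacing families framework of \cite{MSS1}. Build a binary tree of depth $n$, index its nodes by partial membership assignments $\mathbf{t} = (t_1,\ldots,t_j) \in \{0,1\}^j$ (with $t_i = 1$ denoting ``$i \in S$''), and attach to each node the polynomial
\[
q_{\mathbf{t}}(x) := \sum_{\substack{(t_{j+1},\ldots,t_n) \in \{0,1\}^{n-j} \\ t_1 + \cdots + t_n = m}} \chi[\mathbf{A}_{\{i : t_i = 1\}}](x).
\]
By the preceding lemma the root polynomial is $\sum_{|S|=m}\chi[\mathbf{A}_S] = \chi^{(m)}[\mathbf{A}]/m!$, the leaves are the individual polynomials $\chi[\mathbf{A}_S]$ (or zero when a branch is infeasible), and each internal polynomial is the sum of its two children by the obvious partition on $t_{j+1}$. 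If this is a valid interlacing family, then iterating Lemma \ref{lem:interlacing} from root to leaf produces a subset $S$ of size $m$ with $\lambda_{\max}(\chi[\mathbf{A}_S]) \le \lambda_{\max}(\sum_{|S|=m}\chi[\mathbf{A}_S])$, which is precisely the claim.

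To verify validity, by the Fell characterization quoted after Definition \ref{def:interlacing}, it suffices to show that every convex combination $\alpha q_{\mathbf{t},0} + (1-\alpha) q_{\mathbf{t},1}$ of sibling polynomials is real rooted for $\alpha \in [0,1]$. Using formula \eqref{restrictedMDP}, with $T_1 = \{i \le j : t_i = 1\}$ and $l = m - |T_1|$, one can rewrite
\[
q_{\mathbf{t}}(x) = \frac{1}{(k!)^n}\,\partial^{k T_1}\!\cdot e_{l}\!\left(\partial_{z_i}^k : i > j\right) G(z_1,\ldots,z_n)\Big|_{z_1 = \cdots = z_n = x},
\]
where $G(z) := \prod_{j=1}^k \det(\operatorname{diag}(z_1,\ldots,z_n) - A^{(j)})$ is real stable, being a product of real stable determinants of Hermitian pencils. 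The two sibling operators at a node branching on coordinate $j+1$ differ only by whether $\partial^k_{z_{j+1}}$ is factored out or absorbed into the elementary symmetric polynomial; their convex combination is again an explicit differential operator acting on $G$, and the associated convex combination of polynomials is its diagonal restriction.

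The main technical obstacle is proving that this diagonal restriction is real rooted for every $\alpha$. A direct application of Theorem \ref{thm:bbweyl} to the operator symbol is delicate because elementary symmetric polynomials in $w_i^k$ need not be real stable for $k \ge 2$. To navigate this I would polarize $G$ by introducing $k$ separate variables $z_i^{(r)}$ per index $i$, express $\partial^k_{z_i}$ restricted to the diagonal $z_i = x$ as $\prod_{r=1}^k \partial_{z_i^{(r)}}$ acting on the polarized polynomial $\widetilde G(z) = \prod_{j=1}^k \det(\operatorname{diag}(z_1^{(j)},\ldots,z_n^{(j)}) - A^{(j)})$, which is manifestly real stable, and only then diagonalize $z_i^{(r)} = x$. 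In the polarized setting the relevant symbols become products of single linear factors in distinct variables, to which Theorem \ref{thm:bbweyl} applies routinely. Composing with partial derivatives and diagonal restriction, both of which preserve real stability, yields the required real rootedness of sibling convex combinations, validating the interlacing family and completing the proof via Lemma \ref{lem:interlacing}.
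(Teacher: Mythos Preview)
Your differential formula for $q_{\mathbf t}$ is incorrect, and the error propagates through the rest of the argument. Applying \eqref{restrictedMDP} to $\chi[\mathbf A_S]=\chi[\mathbf A([n]\setminus S)]$ gives
\[
\chi[\mathbf A_S]=\frac{1}{(k!)^n}\,\partial^{(k-1)[n]}\,\partial^{S}\,G\Big|_{Z=xI},
\]
so that every index carries at least $k-1$ derivatives. Your expression $\partial^{kT_1}\, e_l(\partial^{k}_{z_i}:i>j)\,G|_{z=x}$ omits the $(k-1)$-fold differentiation on indices outside $S$. Since each factor of $G$ is multilinear, one computes $\partial^{kS}G|_{z=x}=(k!)^{|S|}\prod_{r=1}^k\chi[A^{(r)}_S](x)$, so your formula actually produces sums of products $\prod_r\chi[A^{(r)}_S]$ (degree $k(n-m)$) rather than the MDPs $\chi[\mathbf A_S]$ (degree $n-m$). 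The polarization to $\widetilde G$ faithfully implements this same wrong formula and does not recover the MDP; in particular the ``technical obstacle'' you identify (symbols in $w_i^k$) is an artifact of the missing derivatives, not a genuine feature of the problem.

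Once the factor $\partial^{(k-1)[n]}$ is restored, the polynomial $H:=\partial^{(k-1)[n]}G$ is already multiaffine and real stable, the remaining operators are first-order, and your binary include/exclude tree goes through: the sibling symbol is (up to sign) $\alpha\,e_l(w_i:i>j+1)+(1-\alpha)\,w_{j+1}\,e_{l-1}(w_i:i>j+1)$, which is a positive rescaling of $e_l(w_i:i>j)$ in the variable $w_{j+1}$ and hence stable, so Theorem~\ref{thm:bbweyl} applies with no polarization needed. This is a legitimate alternative to the paper's argument, but note that the paper organizes things more economically: it uses a depth-$m$ tree that at each stage chooses \emph{which} index to delete next, relying on the identity $m!\sum_{|S|=m}\chi[\mathbf A_S]=\chi^{(m)}[\mathbf A]$ and the fact that $x$-differentiation preserves common interlacing. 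That reduces the interlacing check to stability of $\bigl(\sum_i\alpha_i\partial_{z_i}\bigr)H$, which is immediate.
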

\begin{proof}
 We first claim that the polynomials $\chi[\textbf{A}_{(\{i\})}]$ for $i \in [n]$
have a common interlacer. This is equivalent to verifying that for every
$\alpha_1, \cdots, \alpha_k > 0$, the polynomial 
$$\sum_{i \in [n]} \alpha_i \chi[\textbf{A}_{\{i\}}]$$
 is real rooted. 
We have, 
\[\sum_{i \in [n]} \alpha_i \chi[\textbf{A}_{\{i\}}] = \left(\sum_{i \in [n]} \alpha_i \partial_i\right)
\partial^{(k-1)[n]} \left(\prod_{i = 1}^{k}
\operatorname{det}[Z-A^{(i)}]\right)\mid_{Z = xI}.\] The polynomial
$\partial^{(k-1)[n]} \left(\prod_{i = 1}^{k}
\operatorname{det}[Z-A^{(i)}]\right)$ is real stable, the differential operator
$\left(\sum_{i \in [n]} \alpha_i \partial_i\right)$ preserves real stability and
diagonalization preserves real stability, yielding a univariate real stable
polynomial, which is necessarily real rooted.

Since differentiation preserves interlacing, we conclude that the polynomials
$$\chi^{(j)}[\textbf{A}_{\{i\}}],\quad i\in [n]$$ for fixed $j$ have a common interlacer. 
As a consequence, see \cite{MSSICM}, we have that there is an $i_1$ such that, 
\[\lambda_{max} \left(\chi^{(m-1)}[\textbf{A}_{\{i_1\}}]\right) \leq
\lambda_{max} \left(\sum_{i \in [n]} \chi^{(m-1)}[\textbf{A}_{\{i\}}]\right) = \lambda_{max}\left(\chi^{(m)}[\textbf{A}]\right).\]

Repeating the same argument, we have that there is an $i_2$ such that, 
\[\lambda_{max} \left(\chi^{(m-2)}[\textbf{A}_{\{i_1, i_2\}}]\right) \leq
\lambda_{max} \left(\sum_{i \in [n] \setminus i_1} \chi^{(m-2)}[\textbf{A}_{\{i_1,
i\}}]\right) = \lambda_{max}\left(\chi^{(m-1)}[\textbf{A}_{\{i_1\}}]\right) \leq \lambda_{max}\left(\chi^{(m)}[\textbf{A}]\right).\]
Iterating this, we see that there is a subset $S$ of size $m$ such that, 
\[\lambda_{max}\left(\chi[\textbf{A}_S]\right) \leq \lambda_{max}\left(\chi^{(m)}[\textbf{A}]\right).\]
\end{proof}

To finish the proof, we appeal to the following Theorem from \cite{MRI}[Lemmas 4.3, 4.5], which shows
how the roots of a univariate polynomial shrink under taking many derivatives.
\begin{theorem}[Root Shrinking]\label{thm:mohangauss2}
For any real rooted polynomial $p$ of degree $n$ with roots in $[-1, 1]$, with average of roots $0$ and with average of the squares of the roots $\alpha$ and any $c \leq (1+\alpha)^{-1}$, 
\[\lambda_{max} p^{(n(1-c))} \leq c(1-\alpha) + 2\sqrt{c(1-c)\alpha}.\]
\end{theorem}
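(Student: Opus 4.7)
The approach is a barrier function argument in the spirit of Marcus--Spielman--Srivastava, refined to track the second moment of the root distribution as well as the first. For a real rooted polynomial $p$ of degree $n$ with roots $\lambda_1\le\cdots\le\lambda_n$, and a point $x>\lambda_n$, I would define the normalized first and second barriers
\[
\Phi_p(x) = \frac{1}{n}\sum_{i=1}^{n}\frac{1}{x-\lambda_i}, \qquad \Psi_p(x) = \frac{1}{n}\sum_{i=1}^{n}\frac{1}{(x-\lambda_i)^2},
\]
so that $\lambda_n \le x - 1/(n\Phi_p(x))$. A single-barrier argument tracking only $\Phi$ recovers the weaker $\alpha$-free bound $2\sqrt{c(1-c)}$; to capture the sharper $\alpha$-dependent estimate, both barriers have to be controlled jointly.

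First I would derive a one-step evolution rule. Logarithmic differentiation of $p$ yields identities expressing $\Phi_{p'}(x)$ and $\Psi_{p'}(x)$ in terms of $\Phi_p(x)$, $\Psi_p(x)$, and a third moment of the normalized Cauchy transform. The third moment is controlled from above by $\Phi_p\cdot\Psi_p$ via Cauchy--Schwarz, closing the recursion in $(\Phi,\Psi)$. Combining one differentiation step with an infinitesimal downward shift $x\mapsto x-\mathrm{d}t$ chosen to preserve $x>\lambda_n$, and rescaling $t=k/n$ where $k$ is the number of derivatives taken, produces a coupled system of ODEs for $(\Phi(t),\Psi(t),x(t))$ on $t\in[0,1-c]$. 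The initial data at $t=0$ are constrained by the moment hypotheses $\tfrac{1}{n}\sum\lambda_i=0$ and $\tfrac{1}{n}\sum\lambda_i^2=\alpha$, together with the support condition $\lambda_i\in[-1,1]$.

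Next I would identify the extremal root distribution. Among all probability measures on $[-1,1]$ with mean zero and second moment $\alpha$, the extreme points of this convex set are atomic measures on at most three points, by a standard Carath\'eodory/moment-matching argument. A direct variational check shows that the worst case is the symmetric three-atom measure $\tfrac{\alpha}{2}\delta_{-1}+(1-\alpha)\delta_0+\tfrac{\alpha}{2}\delta_1$. Substituting the corresponding polynomial $p(x)=(x+1)^{\alpha n/2}\,x^{(1-\alpha)n}\,(x-1)^{\alpha n/2}$ into the ODE system and integrating from $0$ to $1-c$ yields exactly the stated bound $c(1-\alpha)+2\sqrt{c(1-c)\alpha}$. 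The constraint $c\le(1+\alpha)^{-1}$ then emerges as the range in which this bound does not exceed $1$: at $c=(1+\alpha)^{-1}$ a direct computation gives $(1-\alpha)/(1+\alpha)+2\alpha/(1+\alpha)=1$, which is the trivial bound implied by $\lambda_i\in[-1,1]$.

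The main obstacle is the joint tracking of $\Phi$ and $\Psi$: a pure first-barrier analysis is insensitive to the second moment and can only recover the weaker $\alpha$-free estimate $2\sqrt{c(1-c)}$, which is the $\alpha=1$ worst case with all roots at $\pm 1$. The closed recursion on $(\Phi,\Psi)$ hinges on the Cauchy--Schwarz bound for the third moment and on choosing the infinitesimal shift $\mathrm{d}x$ per differentiation step optimally, and the explicit form of the bound follows by matching the resulting ODE trajectory to the three-atom extremal.
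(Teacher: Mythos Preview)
The paper does not prove this statement: it is quoted verbatim from \cite{MRI} (Lemmas~4.3 and~4.5 there) and invoked as a black box in the proof of Theorem~\ref{thm:ri}. There is therefore no in-paper argument to compare your proposal against.

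On the proposal itself: the barrier-function framework is the right genre, and indeed the argument in \cite{MRI} proceeds by tracking $\Phi_p(b)=p'(b)/(np(b))$ at a point $b$ above the top root and showing that each differentiation step permits a controlled downward shift of $b$ while preserving a barrier inequality. But your sketch has a structural gap. You propose to (i) derive differential inequalities bounding the evolution of $(\Phi,\Psi,x)$ for a \emph{general} $p$, and then (ii) identify an extremal three-atom measure and compute the trajectory \emph{for that specific polynomial}. Step~(ii) is only a tightness check; the upper bound for arbitrary $p$ has to come from integrating the inequalities in step~(i), and you never carry this out. ``Substituting the corresponding polynomial into the ODE system and integrating'' shows the bound is attained, not that it holds. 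You also do not state what differential inequality the Cauchy--Schwarz step actually yields for $(\Phi,\Psi)$, nor why its solutions are dominated by the three-atom trajectory --- and this domination is not automatic, since an initial extremal for the moment constraints need not remain extremal under the flow. In \cite{MRI} the second-moment information enters through the \emph{initial} choice of barrier height and location rather than through a parallel $\Psi$-recursion, and the induction then runs on a single barrier; your two-barrier scheme would need a genuine comparison principle that you have not supplied.
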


In the proof that follows, given a tuple of Hermitians $\textbf{A} = (A^{(1)}, \cdots, A^{(k)})$, we will use the notation, 
\[\operatorname{Tr}(\textbf{A}^2) := \sum_{i \in [k]}
\operatorname{Tr}((A^{(i)})^2)\]

\begin{proof}[Proof of Theorem \ref{thm:ri}]
We start off by noting that the characteristic polynomial of a zero diagonal $n \times n$ Hermitian matrix $A$ is of the form,
\[x^{n} - \dfrac{\operatorname{Tr}(A^2)}{2}x^{n-2} + \text{ lower order terms}.\]

Observe that for any $k$ tuple of $n \times n$ zero diagonal Hermitians $\bA :=A^{(1)},\ldots,A^{(k)}$ and partition $S_1\cup\ldots\cup S_k=[n]$, each of the matrices $A^{(i)}_{S_i}$ has
eigenvalues in $[-1,1]$. We may write the product of the characteristic polynomials as
\[ \chi[A^{(1)}](x)\ldots \chi[A^{(k)}](x) = x^{n} - \dfrac{\operatorname{Tr}(\textbf{A}^2)}{2} x^{n-2} + \text{ lower order terms},\]
and it is evident that this polynomial positive on $(1,\infty)$ (since it is
monic) and has sign $(-1)^n$ on $(-\infty,1)$. Taking an average over all
partitions and noting that each pair of indices has probability $1/k^2$ of being
in any particular $S_i$, we find that
$$q(x):=\chi[A^{(1)},\ldots,A^{(k)}](x)=\E_{S_1,\ldots, S_k} \prod_{i=1}^k
\chi[A^{(i)}_{S_i}](x) =  x^{n} - \dfrac{\operatorname{Tr}(\textbf{A}^2)}{2k^2} x^{n-2} + \cdots$$
also has these properties, so all of its roots must lie in $[-1,1]$. Moreover, by above, the sum of the roots of $q$ is zero and the average of the squares of the roots is 
\[\alpha = \dfrac{\operatorname{Tr}(\textbf{A}^2)}{nk^2} \leq \frac{nk}{nk^2}\le \dfrac{1}{k},\]
since $\|A^{(i)}\|\le 1$ for every $j$.

Writing, $m=(1-c)n$ for $c$ sufficiently small, Theorem
\ref{thm:mohangauss2} yields that,  
\[\lambda_{max} \left(\chi^{(m)}[\bA]\right)\leq c(1-\alpha) + 2\sqrt{c(1-c)\alpha} \leq \dfrac{c(k-1)}{k} + 2\sqrt{\dfrac{c(1-c)}{k}}.\]
Applying Theorem \ref{thm:shrink}, this implies that
\[\lambda_{max}(A^{(i)}_\sigma)\le c(k-1)+2\sqrt{c(1-c)k}, \quad i \in [k].\]
Given $\epsilon < 1$, setting $c = \epsilon^2/6k$, we see that, 
\[ c(k-1)+2\sqrt{c(1-c)k} \leq ck + 2\sqrt{ck} = \dfrac{\epsilon^2}{6} + \dfrac{2\epsilon}{\sqrt{6}} < \epsilon,\]
as desired.
\end{proof}

\section{Joint Paving}\label{sec:paving}
In this section we prove Theorem \ref{thm:paving}. As with restricted
invertibility, the idea is to construct an interlacing family on certain MDPs,
this time indexed by $r-$wise partitions of $[n]$. To this end, let $\cP_r$
denote the set of partitions of $[n]$ into $r$ subsets. Given a partition
$\textbf{S}=(S_1,\ldots,S_r)\in\cP_r$ and a tuple of matrices
$\bA=(A^{(1)},\ldots,A^{(k)})$, recall that we defined,
$$ \chi[\bA(\textbf{S})] := \prod_{j=1}^r \chi[\bA(S_j)].$$

We will show that these polynomials can be used to construct an interlacing tree, as follows. For any $m \leq n$ and any ordered $r$ partition (that may include empty sets) of $[m]$, $\textbf{T} = (T_1, \cdots, T_r)$, i.e. $T_1 \amalg \cdots \amalg T_r = [m]$, define,
\begin{eqnarray}\label{qdef} 
q(\textbf{T}) := r^{m-n}\sum_{\textbf{T} \prec \textbf{S} \in \cP_r} \chi[\textbf{A}(\textbf{S})].
\end{eqnarray}

The ordering $\textbf{T} \prec \textbf{S}$ defined on $r$ element subsets of $[n]$ means that $T_i \subset S_i$ for $i \in [r]$. In other words, $q(\textbf{T})$ is the expected characteristic polynomial over all extensions of the $r$ partition $\textbf{T}$ of $[m]$ to $r$ partitions of $[n]$. These polynomials have concise combinatorial expressions, extending the formula in (\ref{MDPPaving}), which applies to the case when $\textbf{T}$ is a partition of $[n]$. 

\begin{proposition}
Let $\textbf{A} = (A^{(1)}, \cdots, A^{(k)})$ be a $k$ tuple of matrices in $M_n(\mathbb{C})$ and let $m \leq n$ and let $\textbf{T} = (T_1, \cdots, T_r)$ be a $r$ partition of $[m]$. Then, 
 \begin{eqnarray}
q(\textbf{T}) = \dfrac{1}{(k!)^{rm}(kr!)^{n-m}}\prod_{j \in [m+1,n]}\left(\sum_{ i \in [r]}\partial_{(i)}^{j}\right)^{kr-1}\left(\prod_{i \in [r]} \partial_{(i)}^{k[m]\setminus T_i}\right) \prod_{i = 1}^{r} \prod_{j = 1}^{k}\operatorname{det}[Z_i - A^{(j)}]\mid_{Z_1 = \cdots = Z_r = xI}
\end{eqnarray}
\end{proposition}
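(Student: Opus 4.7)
The plan is to start from the definition $q(\textbf{T}) = r^{m-n}\sum_{\textbf{T}\prec\textbf{S}}\chi[\textbf{A}(\textbf{S})]$, substitute in formula \eqref{MDPPaving} inside the sum, factor each operator $\partial_{(i)}^{k[n]\setminus S_i}$ as $\partial_{(i)}^{k[m]\setminus T_i}\cdot\partial_{(i)}^{k[m+1,n]\setminus U_i}$ where $U_i := S_i\cap[m+1,n]$, and pull the $\textbf{T}$-piece (which is common to every extension) out in front of the sum. The remaining task is to evaluate
$$\sum_{\textbf{U}}\prod_{i\in[r]}\partial_{(i)}^{k[m+1,n]\setminus U_i}$$
over all ordered $r$-partitions $\textbf{U}=(U_1,\ldots,U_r)$ of $[m+1,n]$. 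Since the operators for different $\ell\in[m+1,n]$ commute and act on disjoint variables, this sum factors: for each $\ell$, writing $i_\ell$ for the unique index with $\ell\in U_{i_\ell}$, the contribution is $(\partial_{(i_\ell)}^\ell)^{k-1}\prod_{i\neq i_\ell}(\partial_{(i)}^\ell)^k$, and summing over $i_\ell\in[r]$ gives the inner factor at $\ell$.

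The essential step, and what I expect to be the main (but still manageable) obstacle, is a combinatorial identity relating this inner factor to $\bigl(\sum_i\partial_{(i)}^\ell\bigr)^{kr-1}$. Expanding this latter operator by the multinomial theorem gives $\sum_{a_1+\cdots+a_r=kr-1}\tfrac{(kr-1)!}{\prod_i a_i!}\prod_i(\partial_{(i)}^\ell)^{a_i}$, and when applied to $P := \prod_{i,j}\det[Z_i-A^{(j)}]$ all terms with any $a_i>k$ vanish because $\prod_{j=1}^k\det[Z_i-A^{(j)}]$ is a polynomial of degree exactly $k$ in $z^{(i)}_\ell$ (being a product of $k$ factors each linear in that variable). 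The constraint $\sum a_i=kr-1$ with $a_i\leq k$ then forces exactly one coordinate to equal $k-1$ and the rest to equal $k$; the corresponding multinomial coefficient is $(kr-1)!/[(k-1)!(k!)^{r-1}]$, yielding the identity
$$\sum_{i_\ell\in[r]}(\partial_{(i_\ell)}^\ell)^{k-1}\!\!\prod_{i\neq i_\ell}(\partial_{(i)}^\ell)^k\cdot P=\frac{(k-1)!\,(k!)^{r-1}}{(kr-1)!}\left(\sum_i\partial_{(i)}^\ell\right)^{kr-1}\!\!P.$$

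Finally I collect scalars. Multiplying the identity over all $\ell\in[m+1,n]$ and combining with the prefactor $r^{m-n}/(k!)^{rn}$ produces
$$\frac{r^{m-n}}{(k!)^{rn}}\left(\frac{(k-1)!(k!)^{r-1}}{(kr-1)!}\right)^{n-m}=\frac{1}{(k!)^{rm}}\left(\frac{1}{r\cdot k\cdot(kr-1)!}\right)^{n-m}=\frac{1}{(k!)^{rm}\,((kr)!)^{n-m}},$$
where the middle equality splits off $(k!)^{r(n-m)}$ and uses $k!/(k-1)!=k$, and the last uses $r\cdot k\cdot(kr-1)!=(kr)!$. This is exactly the constant in the claim, and the remaining differential operator is precisely $\bigl(\prod_i\partial_{(i)}^{k[m]\setminus T_i}\bigr)\prod_{j\in[m+1,n]}\bigl(\sum_i\partial_{(i)}^j\bigr)^{kr-1}$ acting on $P$, completing the proof. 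The rest is pure bookkeeping; the combinatorial identity above, built from multilinearity, carries all the content.
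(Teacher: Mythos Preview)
Your proof is correct and follows essentially the same route as the paper's: substitute \eqref{MDPPaving}, split the differential operator along $[m]$ and $[m+1,n]$, factor the sum over partitions of $[m+1,n]$ coordinatewise, and collapse each coordinate factor into $\bigl(\sum_i\partial_{(i)}^\ell\bigr)^{kr-1}$. The paper passes over that last collapse without justification; your explicit multinomial argument (using that $P$ has degree exactly $k$ in each $z^{(i)}_\ell$, forcing the surviving multi-indices to have one entry $k-1$ and the rest $k$) is the natural way to fill it in, and your constant bookkeeping is accurate.
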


\begin{proof}
We have,
\begin{eqnarray*}
q(\textbf{T}) &=& r^{m-n}\sum_{\textbf{T} \prec \textbf{S} \in \cP_r} \chi[\textbf{A}(\textbf{S})]\\
&=& \dfrac{r^{m-n}}{(k!)^{rn}}\sum_{\textbf{R}\in \mathcal{P}_r([m+1,n])}\left(\prod_{i \in [r]} \partial_{(i)}^{k[n]\setminus T_i \amalg R_i}\right) \prod_{i = 1}^{r} \prod_{j = 1}^{k}\operatorname{det}[Z_i - A^{(j)}]\mid_{Z_1 = \cdots = Z_r = xI}\\
&=& \dfrac{r^{m-n}}{(k!)^{rn}}\left(\sum_{\textbf{R}\in \mathcal{P}_r([m+1,n])}\prod_{i \in [r]} \partial_{(i)}^{k[m+1,n]\setminus R_i}\right) \left(\prod_{i \in [r]} \partial_{(i)}^{k[m]\setminus T_i}\right) \prod_{i = 1}^{r} \prod_{j = 1}^{k}\operatorname{det}[Z_i - A^{(j)}]\mid_{Z_1 = \cdots = Z_r = xI}\\
&=& \dfrac{1}{(k!)^{rm}(kr!)^{n-m}}\prod_{j \in [m+1,n]}\left(\sum_{ i \in [r]}\partial_{(i)}^{j}\right)^{kr-1}\left(\prod_{i \in [r]} \partial_{(i)}^{k[m]\setminus T_i}\right) \prod_{i = 1}^{r} \prod_{j = 1}^{k}\operatorname{det}[Z_i - A^{(j)}]\mid_{Z_1 = \cdots = Z_r = xI}
\end{eqnarray*}
\end{proof}

We now show that the polynomials $q(\textbf{T})$ form an interlacing family. We will use the notation $e_j^i$ for $i \in [n]$ to denote a collection of $r$ subsets of $[n]$, with the $i$'th subset equalling the singleton $\{j\}$ and the other subsets being empty. Formally,
\[e_j^i := (\phi, \cdots, \phi, \overbrace{\{j\}}^{i}, \phi, \cdots, \phi).\]

\begin{lemma}\label{MDPInterlacing}
Let $\textbf{A} = (A^{(1)}, \cdots, A^{(k)})$ be a $k$ tuple of Hermitian matrices in $M_n(\mathbb{C})$ and let $m \leq n$ and let $\textbf{T} = (T_1, \cdots, T_r)$ be a $r$ partition of $[m]$. Then, 
\begin{enumerate}
 \item The polynomials $q(\textbf{T} \cup e_{m+1}^{i})$ for $i \in [r]$ have a common interlacer. 
 \item The expected characteristic polynomial satisfies, 
 \[\mathbb{E}_{i \in [r]} q(\textbf{T} \cup e_{m+1}^{i}) = q(\textbf{T}).\]
 \item Consequently, there is a $i \in [r]$ such that, 
 \[\lambda_{max} \left(q(\textbf{T} \cup e_{m+1}^{i})\right) \leq \lambda_{max} \left(q(\textbf{T})\right).\]
\end{enumerate}
\end{lemma}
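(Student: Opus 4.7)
My plan is to establish the three parts in the order (2), (3), (1), saving the substantive work for last. Part (2) is a direct bookkeeping check using (\ref{qdef}): since any $\textbf{S} \in \cP_r$ extending $\textbf{T}$ places $m+1$ in exactly one of the $r$ blocks, averaging the defining sums $q(\textbf{T} \cup e_{m+1}^i) = r^{m+1-n}\sum_{\textbf{S} \succ \textbf{T} \cup e_{m+1}^i}\chi[\textbf{A}(\textbf{S})]$ uniformly over $i$ reassembles $r^{m-n}\sum_{\textbf{S} \succ \textbf{T}}\chi[\textbf{A}(\textbf{S})] = q(\textbf{T})$. Granted (1), Part (3) is immediate from Lemma~\ref{lem:interlacing}: each $q(\textbf{T} \cup e_{m+1}^i)$ is monic of degree $n$ (the normalisation $r^{m+1-n}$ cancels against the $r^{n-m-1}$ extensions, and each $\chi[\textbf{A}(S_l)]$ is monic) and by (1) they share a common interlacer, so some $i$ satisfies
\[\lambda_{\max}(q(\textbf{T} \cup e_{m+1}^i)) \le \lambda_{\max}\bigl(\textstyle\sum_i q(\textbf{T} \cup e_{m+1}^i)\bigr) = \lambda_{\max}(r\cdot q(\textbf{T})) = \lambda_{\max}(q(\textbf{T})).\]

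The real content lies in Part (1). By Fell's criterion it suffices to prove that every non-negative combination $\Phi_\alpha := \sum_i \alpha_i\, q(\textbf{T} \cup e_{m+1}^i)$ is real-rooted. Inspecting the differential formula from the preceding Proposition, the only part that depends on $i$ is the operator acting at position $m+1$: for $\textbf{T}' = \textbf{T} \cup e_{m+1}^i$ the multiset $k[m+1]\setminus T'_l$ has $m+1$ with multiplicity $k-1$ when $l=i$ and multiplicity $k$ when $l\neq i$, so this operator unfolds as $\hat D_i := \partial_{z^{(i)}_{m+1}}^{k-1}\prod_{l\neq i}\partial_{z^{(l)}_{m+1}}^{k}$. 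Writing $P := \prod_{l=1}^r\prod_{j'=1}^k\det[Z_l - A^{(j')}]$ and collecting all the remaining (shared) operators into $D_{\mathrm{other}}$, I obtain
\[\Phi_\alpha \;=\; C \cdot D_{\mathrm{other}} \cdot \bigl(\textstyle\sum_i \alpha_i \hat D_i\bigr) \cdot P\,\Big|_{Z_1 = \cdots = Z_r = xI}.\]
The polynomial $P$ is real stable (a product of $\det[Z_l - A^{(j')}]$, each real stable in its diagonal variables because $A^{(j')}$ is Hermitian); $D_{\mathrm{other}}$ is a composition of partial derivatives and powers of the positive linear combinations $\sum_l\partial^j_{(l)}$, all standard stability preservers; and specialisation $Z_l\mapsto xI$ preserves real stability while yielding a univariate polynomial, hence real rooted. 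Thus it remains only to verify that $\sum_i\alpha_i\hat D_i$ preserves real stability.

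By Theorem~\ref{thm:bbweyl}, this reduces to showing that the symbol $F(w) := \sum_i \alpha_i w_i^{k-1}\prod_{l\neq i}w_l^k$ is real stable in $w_1,\ldots,w_r$. I factor $F(w) = \prod_l w_l^{k-1}\cdot \sum_i \alpha_i\prod_{l\neq i}w_l$. The first factor vanishes only on coordinate hyperplanes, so is nonzero when every $w_l$ lies in the open upper half plane. The second factor equals $\prod_l w_l \cdot \sum_i\alpha_i/w_i$; since $\operatorname{Im}(1/w_l) = -\operatorname{Im}(w_l)/|w_l|^2 < 0$ whenever $\operatorname{Im}(w_l) > 0$, we have $\operatorname{Im}\sum_i\alpha_i/w_i < 0$ as long as not all $\alpha_i$ vanish, and this factor is nonzero as well. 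Hence $F(w)$ is real stable, completing Part (1). I expect this verification to be the main obstacle: the symmetrised form $\hat D_i$ from the Proposition (with $\partial^{k-1}$ rather than $\partial^0$ in the distinguished block) is essential, for the ``naive'' symbol $\sum_i \alpha_i\prod_{l\neq i}w_l^k$ arising from a more direct expected-value formula is \emph{not} real stable once $k\ge 2$---for example $\alpha_1 w_2^2 + \alpha_2 w_1^2$ with $\alpha_1,\alpha_2>0$ has a zero at $w_1 = 1+i$, $w_2 = \sqrt{\alpha_2/\alpha_1}\,(-1+i)$, both in the upper half plane.
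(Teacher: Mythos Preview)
Your argument is correct and essentially parallels the paper's. For Part~(1) the paper first factors out the common operator $\prod_{l}\partial_{(l)}^{(k-1)\{m+1\}}$ and then checks stability of the residual operator $D=\sum_i\alpha_i\prod_{l\neq i}\partial_{(l)}^{\{m+1\}}$, whose symbol is exactly the second factor in your decomposition $F(w)=\prod_l w_l^{k-1}\cdot\sum_i\alpha_i\prod_{l\neq i}w_l$; so the two stability verifications coincide. (One small point: Theorem~\ref{thm:bbweyl} literally asks for $F(-w)$ to be real stable, not $F(w)$; since your $F$ is homogeneous of degree $rk-1$ these differ only by a sign, so nothing is lost.) Your combinatorial proof of Part~(2) directly from the defining sum~(\ref{qdef}) is a pleasant simplification over the paper's route, which instead establishes the identity by manipulating the differential formula through a multinomial-type expansion of $\bigl(\sum_{l}\partial_{(l)}^{\{m+1\}}\bigr)^{kr-1}$.
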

\begin{proof}
 Let $p$ be the polynomial, 
 \[p = \dfrac{1}{(k!)^{r(m+1)}(kr!)^{n-m-1}}\prod_{j \in [m+2,n]}\left(\sum_{ i \in [r]}\partial_{(i)}^{j}\right)^{kr-1}\left(\prod_{i \in [r]} \partial_{(i)}^{k[m]\setminus T_i}\right) \prod_{i = 1}^{r} \prod_{j = 1}^{k}\operatorname{det}[Z_i - A^{(j)}].\]
 We have that, 
 \begin{eqnarray*}
  q(\textbf{T} \cup e_{m+1}^{i}) &=& \partial_{(i)}^{(k-1)\{m+1\}} \left(\prod_{\substack{j \neq i\\ j \in [r]}} \partial_{(i)}^{k\{m+1\}}\right)\,p \mid_{Z_1 = \cdots = Z_r = xI} 
  \end{eqnarray*}
  To show that the polynomials $q(\textbf{T} \cup e_{m+1}^{i})$ have a common interlacer, we need to show that for any non-negative real numbers $\{\alpha_i\}_{i \in [r]}$, the polynomial $\sum_{i \in [r]} \alpha_i q(\textbf{T} \cup e_{m+1}^{i})$ is real rooted. We see that, 
  \begin{eqnarray}\label{convexcombination}
   \sum_{i \in [r]} \alpha_i q(\textbf{T} \cup e_{m+1}^{i}) &=& D \left(\prod_{ j \in [r]} \partial_{(i)}^{(k-1)\{m+1\}}\right)\,p \mid_{Z_1 = \cdots = Z_r = xI}, 
  \end{eqnarray}
  where $D$ is the differential operator, 
  \[D = \sum_{i \in [r]}\alpha_i \prod_{\substack{j \neq i\\ j \in [r]}} \partial_{(j)}^{\{m+1\}}.
  \]
  The symbol of this diffrential operator is $F_D(w):=\sum_{i \in [r]}\alpha_i
\prod_{\substack{j \neq i\\ j \in [r]}} w_j$, and $F_D(-w)$ may be written as,
  \[(-1)^{r-1}\sum_{i \in [r]}\alpha_i \prod_{\substack{j \neq i\\ j \in [r]}}
w_j = (-1)^{r-1}\left(\sum_{i \in [r]} \alpha_i \partial_i\right) \prod_{i \in
[r]} w_i.\]
  We conclude that $F_D(-w)$ is real stable, which by \ref{thm:bbweyl} shows that $D$ preserves real stability. We conclude that the polynomial in (\ref{convexcombination}) is real rooted. This proves the first assertion of the proposition. 
  
  For the second, we see that, 
  \begin{eqnarray*}
   \mathbb{E}_{i \in [r]} q(\textbf{T} \cup e_{m+1}^{i}) &=& \dfrac{1}{r}\sum_{i \in [r]} \partial_{(i)}^{(k-1)\{m+1\}} \left(\prod_{\substack{j \neq i\\ j \in [r]}} \partial_{(i)}^{k\{m+1\}}\right)\,p \mid_{Z_1 = \cdots = Z_r = xI} \\
   &=&\dfrac{1}{r} \dfrac{(k!)^{r-1}(k-1)!}{(kr-1)!}\left(\sum_{ i \in [r]}\partial_{(i)}^{\{m+1\}}\right)^{kr-1} p \mid_{Z_1 = \cdots = Z_r = xI}\\
   &=&  \dfrac{(k!)^{r}}{(kr)!}\left(\sum_{ i \in [r]}\partial_{(i)}^{\{m+1\}}\right)^{kr-1} p \mid_{Z_1 = \cdots = Z_r = xI}\\
   &=& \dfrac{1}{(k!)^{rm}(kr!)^{n-m}}\prod_{j \in [m+1,n]}\left(\sum_{ i \in [r]}\partial_{(i)}^{j}\right)^{kr-1}\\
   &&\left(\prod_{i \in [r]} \partial_{(i)}^{k[m]\setminus T_i}\right) \prod_{i = 1}^{r} \prod_{j = 1}^{k}\operatorname{det}[Z_i - A^{(j)}]\mid_{Z_1 = \cdots = Z_r = xI}\\
   &=& q(\textbf{T}).
  \end{eqnarray*}
The third assertion follows from Lemma \ref{lem:interlacing}.

\end{proof}

We now give a succinct formula the expected characteristic polynomial over all partitions. We do this by deriving a formula for the expectation of this polynomial over a random partition, which turns out to be an MDP.
\begin{lemma}\label{ExpectedMDP}
Given a $k$-tuple of matrices $\textbf{A} = (A^{(1)}, \cdots, A^{(k)})$ in
$M_n(\mathbb{C})$, we have, \[\mathbb{E}_{\textbf{S} \in \mathcal{P}_r}
\chi[\textbf{A}(\textbf{S})]   = \chi[\overbrace{\textbf{A},  \cdots,
\textbf{A}}^{r}].\]
\end{lemma}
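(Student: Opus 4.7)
The plan is to expand both sides of the identity into sums over partitions of $[n]$ and then identify them via a natural bijection that repackages an $r$-partition together with a $k$-partition of each of its blocks as a single $rk$-partition.

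First, I would unfold each factor in the product $\chi[\mathbf{A}(\mathbf{S})]=\prod_{i=1}^{r}\chi[\mathbf{A}(S_i)]$ using Definition \ref{def:mdp} applied to the $k$-tuple $(A^{(1)}(S_i),\ldots,A^{(k)}(S_i))$, writing each factor as a sum of $k^{-|S_i|}\prod_{j=1}^{k}\chi[A^{(j)}(T_{i,j})]$ over $k$-partitions $T_{i,1}\amalg\cdots\amalg T_{i,k}=S_i$. Multiplying across $i\in[r]$ and using $\sum_i|S_i|=n$ produces a doubly-indexed family $\{T_{i,j}\}_{(i,j)\in[r]\times[k]}$ with a combined prefactor $k^{-n}$.

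Next, I would average over $\mathbf{S}\in\cP_{r}$. Consistent with the counting implicit in (\ref{qdef}), I take $\cP_{r}$ to be the set of $r^{n}$ ordered $r$-partitions of $[n]$ (with empty blocks allowed), so that the expectation contributes a factor $r^{-n}$ and the overall prefactor becomes $(rk)^{-n}$. The key combinatorial observation is that pairs $(\mathbf{S},\{T_{i,j}\})$ in the summation are in bijection with ordered $rk$-partitions $\{U_{i,j}\}_{(i,j)\in[r]\times[k]}$ of $[n]$: one sets $U_{i,j}:=T_{i,j}$ and recovers $S_i=\amalg_j T_{i,j}$. This collapses the averaged sum to
$$(rk)^{-n}\sum_{\amalg_{(i,j)}U_{i,j}=[n]}\;\prod_{(i,j)\in[r]\times[k]}\chi[A^{(j)}(U_{i,j})],$$
which is exactly Definition \ref{def:mdp} applied to the $rk$-tuple $(\underbrace{\mathbf{A},\ldots,\mathbf{A}}_{r})$ after relabeling its $rk$ indices.

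No inequality, stability, or interlacing argument is invoked, so the main (and only) obstacle is careful bookkeeping: keeping the three normalizations consistent ($k^{-n}$ from each inner MDP, $r^{-n}$ from the outer expectation, and $(rk)^{-n}$ demanded by the target MDP), and being explicit about which convention of ordered $r$-partition is in force so that the bijection is genuinely one-to-one.
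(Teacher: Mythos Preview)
Your proposal is correct and follows essentially the same approach as the paper: both arguments rest on the observation that an ordered $r$-partition of $[n]$ together with ordered $k$-subpartitions of each block is in bijection with an ordered $rk$-partition of $[n]$. The paper phrases this probabilistically (a two-stage sampling that produces a uniformly random $rk$-allocation), whereas you phrase it as an explicit combinatorial bijection with matching normalizations $k^{-n}\cdot r^{-n}=(rk)^{-n}$, but the content is the same.
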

\begin{proof} By the definition of an MDP, the right hand side is a uniform
average over all $rk-$partitions of
$[n]$. Consider the following two stage sampling process: 
\begin{enumerate}
\item Choose an $r-$partition $\X\in\cP_r$.
\item For each $X_j\in\X$, choose a subpartition
$X_j=S_j^{(1)}\cup\ldots S_j^{(k)}$ into $k$ parts.
\end{enumerate}
We claim that the  partition $\{S^{(i)}_j\}_{j\le r, i\le k}$ output by the
procedure is uniformly random. To see why, observe that the process constructs a
random allocation $[n]\rightarrow [r]\times [k]$ of elements to subsets by
choosing the first coordinate uniformly and then the second, which certainly
generates a uniformly random allocation. Labeling the $r$ copies of $A^{(i)}$ as
$A^{(i,j)}$ with $j=1,\ldots,r$, we have:
\begin{align*}
\chi[\overbrace{\bA,\ldots,\bA}^{r}] 
&= \E_{\{S^{(i)}_j\}_{ij}\sim\cP_{rk}} \prod_{j=1}^r\prod_{i=1}^k
\chi[A^{(i,j)}(S^{(i)}_j)]
\\&= \E_{\X\sim\cP_r} \left[\E_{\{S^{(1)}_j,\ldots,S^{(k)}_j\in\cP(X_j)\}_j} 
\prod_{j=1}^r\prod_{i=1}^k \chi[A^{(i,j)}(S^{(i)}_j)] \bigg| \X\right]
\\&= \E_{\X\sim\cP_r} \left[ \prod_{j=1}^r\E_{\{S^{(1)}_j,\ldots,S^{(k)}_j\in\cP(X_j)\}_j} 
\prod_{i=1}^k \chi[A^{(i,j)}(S^{(i)}_j)] \bigg| \X\right]
\\&\qquad\textrm{since $(S^{(1)}_{j},\ldots,S^{(k)}_j)$ are independent
for distinct $j$, conditioned on $\X$}
\\&= \E_{\X\sim\cP_r} \left[ \prod_{j=1}^r \chi_{X_j}[A^{(1)},\ldots,A^{(k)}]\bigg| \X\right]
\\&= \E_{\X\sim\cP_r} \chi^\X[\bA],
\end{align*}
as desired.
\end{proof}

\begin{theorem}
 Let $\textbf{A} = (A^{(1)}, \cdots, A^{(k)})$ be a $k$ tuple of Hermitian matrices in $M_n(\mathbb{C})$. Then, there is a partition $\textbf{S} = (S_1, \cdots, S_r)$ of $[n]$ such that, 
 \[\lambda_{max} \left(\chi[\textbf{A}(\textbf{S})]\right) \leq \lambda_{max} \left(\mathbb{E}_{\textbf{S} \in \mathcal{P}_r} \chi[\textbf{A}(\textbf{S})]\right) =  \lambda_{max} \left(\chi[\overbrace{\textbf{A}, \cdots, \textbf{A}}^r]\right).\]
\end{theorem}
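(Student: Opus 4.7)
The plan is to extract the desired partition via the standard interlacing-tree descent argument, using the polynomials $q(\textbf{T})$ from (\ref{qdef}) as the nodes of a tree indexed by partial $r$-partitions and Lemma \ref{MDPInterlacing} to guarantee that the largest root does not increase along some branch.

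First I would unpack the two endpoint cases of the recursion. At the root of the tree, take the empty $r$-partition $\textbf{T}_0 = (\emptyset, \ldots, \emptyset)$ of $[0]$; definition (\ref{qdef}) collapses to
$$q(\textbf{T}_0) = r^{-n} \sum_{\textbf{S} \in \cP_r} \chi[\textbf{A}(\textbf{S})] = \mathbb{E}_{\textbf{S}\in\cP_r} \chi[\textbf{A}(\textbf{S})],$$
and by Lemma \ref{ExpectedMDP} this further equals $\chi[\overbrace{\textbf{A},\ldots,\textbf{A}}^r]$, which yields the right-hand equality in the theorem. At a leaf, where $\textbf{T}$ is already a full partition $\textbf{S}$ of $[n]$, the only $\textbf{S}' \in \cP_r$ with $\textbf{T} \prec \textbf{S}'$ is $\textbf{S}' = \textbf{S}$, so $q(\textbf{S}) = \chi[\textbf{A}(\textbf{S})]$.

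Next I would iterate assertion (3) of Lemma \ref{MDPInterlacing} exactly $n$ times: at stage $m$, given the partial $r$-partition $\textbf{T}_m$ of $[m]$, choose an index $i_{m+1} \in [r]$ so that $\textbf{T}_{m+1} := \textbf{T}_m \cup e_{m+1}^{i_{m+1}}$ satisfies $\lambda_{max}(q(\textbf{T}_{m+1})) \leq \lambda_{max}(q(\textbf{T}_m))$. Chaining these inequalities through all $n$ stages and applying the two endpoint identifications produces
$$\lambda_{max}(\chi[\textbf{A}(\textbf{S})]) = \lambda_{max}(q(\textbf{T}_n)) \leq \lambda_{max}(q(\textbf{T}_0)) = \lambda_{max}\!\left(\chi[\overbrace{\textbf{A},\ldots,\textbf{A}}^r]\right),$$
so taking $\textbf{S} := \textbf{T}_n$ delivers the partition required by the theorem.

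No genuine obstacle remains once Lemmas \ref{MDPInterlacing} and \ref{ExpectedMDP} are in hand; the theorem is essentially a bookkeeping application of the Marcus--Spielman--Srivastava interlacing-extraction template. All the substantive content has already been absorbed into the two supporting lemmas: the common-interlacer verification via Borcea--Branden stability preservation applied to the symbol of a first-order differential operator in Lemma \ref{MDPInterlacing}, and the two-stage uniform sampling identity in Lemma \ref{ExpectedMDP} that rewrites the uniform average of $\chi[\textbf{A}(\textbf{S})]$ over $\cP_r$ as the $rk$-fold MDP.
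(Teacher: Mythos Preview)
Your proposal is correct and follows essentially the same approach as the paper: iterate assertion (3) of Lemma \ref{MDPInterlacing} from the empty partition down to a full partition, then identify the root polynomial via Lemma \ref{ExpectedMDP} and the leaf polynomial directly from definition (\ref{qdef}). The paper's proof is terser but structurally identical.
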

\begin{proof}
 Iterating the third statement in proposition (\ref{MDPInterlacing}), we see that there is a partition $\textbf{S} = (S_1, \cdots, S_r)$ of $[n]$ such that, 
 \[\lambda_{max}\left(q(\textbf{S})\right) \leq \lambda_{max}\left(q(\overbrace{\phi, \cdots, \phi}^r)\right).\]
 From the definition (\ref{qdef}), it is clear that, 
 \[q(\textbf{S}) = \chi[\textbf{A}(\textbf{S})].
 \]
 Also, by lemma (\ref{ExpectedMDP}), we have that, 
 \begin{eqnarray*}
  q(\overbrace{\phi, \cdots, \phi}^r) := r^{-n}\sum_{ \textbf{S} \in \cP_r} \chi[\textbf{A}(\textbf{S})] = \chi[\overbrace{\textbf{A}, \cdots, \textbf{A}}^r]
 \end{eqnarray*}
This completes the proof. 
\end{proof}

Finally, we prove a bound on the largest root of the expected characteristic
polynomial above. This follows by observing that the MDP of a tuple of matrices can be written as the mixed
characteristic polynomial of related matrices, which immediately allows us to transfer known
bounds for roots of mixed characteristic polynomials to MDPs.
\begin{lemma}\label{lem:mdpmcp}
Let $B^{(1)}, \cdots, B^{(k)}$ be positive semidefinite matrices in
$M_n(\mathbb{C})$ and suppose $\{v_j^i\}_{i\le k, j\le n}$ are vectors in
$\mathbb{C}^n$
such that 
$$ B^{(i)}(j_1,j_2)=\langle v^i_{j_1},v^i_{j_2}\rangle,$$
i.e., $B^{(i)}$ is the Gram matrix of the $v^i_j$.
Letting, 
\[X_i = \bigoplus_{j = 1}^{k} v^{i}_j (v^{i}_j)^{*}, \quad i \in [n],\]
we have that,
\[x^{(k-1)n}\chi[kB^{(1)}, \cdots, kB^{(k)}] = \mu[X_1, \cdots, X_m].\]

\end{lemma}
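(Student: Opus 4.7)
The plan is to unfold both sides and exhibit a common random partition of $[n]$ that drives the equality. First, I would choose a representation for the mixed characteristic polynomial: for each $j\in[n]$, let $\tilde w^{(i)}_j \in \mathbb{C}^{kn}$ denote the vector that has $v_j^i$ in the $i$-th block of length $n$ and zeros elsewhere. Then $X_j = \sum_{i=1}^k \tilde w^{(i)}_j (\tilde w^{(i)}_j)^*$ (an orthogonal sum of rank one pieces since the blocks are disjoint). Define independent random vectors $r_j$ taking the value $\sqrt{k}\,\tilde w^{(i)}_j$ with probability $1/k$; then $\mathbb{E}[r_j r_j^*] = X_j$, so by the well-definedness of the mixed characteristic polynomial quoted from \cite{MSS2},
\[\mu[X_1,\ldots,X_n](x) = \mathbb{E}\,\chi\!\left(\sum_{j=1}^n r_j r_j^*\right).\]

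Next, I would observe that the random choices are naturally indexed by a uniformly random partition $(S_1,\ldots,S_k)$ of $[n]$ into $k$ blocks (put $j \in S_i$ iff $r_j = \sqrt{k}\tilde w^{(i)}_j$), and that under this identification
\[\sum_{j=1}^n r_j r_j^* = \bigoplus_{i=1}^k \, k\sum_{j\in S_i} v^i_j (v^i_j)^* = \bigoplus_{i=1}^k k\,V^{(i)} P_{S_i}(V^{(i)})^*,\]
where $V^{(i)}$ has columns $v^i_1,\ldots,v^i_n$, so its characteristic polynomial factors across the $k$ blocks.

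The step that carries most of the content is a Sylvester-type identity: since $V^{(i)} P_{S_i}(V^{(i)})^*$ and $P_{S_i}(V^{(i)})^*V^{(i)}P_{S_i}$ share the same nonzero spectrum with multiplicity, and the latter has the same nonzero spectrum as the submatrix $B^{(i)}(S_i)$, I get
\[\chi\bigl[k V^{(i)} P_{S_i}(V^{(i)})^*\bigr](x) = x^{n-|S_i|}\,\chi\bigl[kB^{(i)}(S_i)\bigr](x).\]
Multiplying across $i$ and using $\sum_i |S_i| = n$ yields a factor $x^{(k-1)n}$ out front. Taking expectation over the uniform random partition and comparing with the sum-over-partitions form of the mixed determinantal polynomial gives
\[\mu[X_1,\ldots,X_n](x) = x^{(k-1)n}\cdot k^{-n}\sum_{S_1\amalg\cdots\amalg S_k=[n]}\prod_{i=1}^k \chi[kB^{(i)}(S_i)](x) = x^{(k-1)n}\chi[kB^{(1)},\ldots,kB^{(k)}](x),\]
as required. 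The main obstacle is the careful bookkeeping of the scaling factors of $k$ and the powers of $x$ coming from rank-deficiency; once one sees that $\sum_i |S_i| = n$ absorbs the degree mismatch between $\mu$ (degree $kn$) and $\chi$ (degree $n$), the identity falls out.
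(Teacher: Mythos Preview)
Your argument is correct and follows essentially the same route as the paper: you introduce the same random vectors $r_j=\sqrt{k}\,\tilde w^{(i)}_j$ with $\mathbb{E}[r_jr_j^*]=X_j$, identify the outcome with a uniform $k$-partition of $[n]$, and use the Sylvester-type identity to pass from $\sum_{j\in S_i}v^i_j(v^i_j)^*$ to $B^{(i)}(S_i)$, collecting the factor $x^{(k-1)n}$ from $\sum_i(n-|S_i|)=(k-1)n$. The only difference is that you spell out the Sylvester step explicitly, whereas the paper states the resulting identity $\chi\big[k\sum_{j\in S_i}v^i_j(v^i_j)^*\big]=x^{n-|S_i|}\chi\big[kB^{(i)}(S_i)\big]$ without comment.
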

\begin{proof}
Define random vectors $r_1, \cdots, r_n$ by letting $r_j$ be the random vector taking values in $\overbrace{\mathbb{C}^n \oplus \cdots \oplus \mathbb{C}^n}^k$ that takes values $0 \oplus \cdots \oplus \overbrace{\sqrt{k}v^{i}_{j}}^j \oplus \cdots \oplus 0$ for $i \in [k]$ with probability $1/k$ apiece. We note that, 
\[\mathbb{E}\left[r_jr_j^{*}\right] = X_j, \quad j \in [n].\]
By the definition of the mixed characteristic polynomial, we have that, 
\[\mu[X_1, \cdots, X_m] = \dfrac{1}{k^{n}} \sum_{S_1 \amalg \cdots \amalg S_k = [n]} \prod_{i \in [k]}\chi\left[k\sum_{j \in S_i} v^{i}_j (v^{i}_{j})^{*}\right]. \] 
We note that, 
\[\chi\left[k\sum_{j \in S_i} v^{i}_j (v^{i}_{j})^{*}\right] = x^{n - |S_i|} \chi\left[k\,B^{(i)}_{S_i}\right].\]
We conclude that, 
\[\mu[X_1, \cdots, X_m]  = \dfrac{1}{k^{n}} x^{(k-1)n}  \sum_{S_1 \amalg \cdots \amalg S_k = [n]} \prod_{i = 1}^{k} \chi[k\,B^{(i)}(S_i)].\]
The expression on the right equals $x^{(k-1)n}\chi[k\,B^{(1)}, \cdots, k\,B^{(k)}] $, by definition. 
\end{proof}

\vspace{.1in}

\noindent {\it Remark.} Lemma \ref{lem:mdpmcp} shows that the MDP, which is an
expectation over uniform $k-$partitions of $[n]$, can be written as a mixed
characteristic polynomial in the covariance matrices $X_i$ which encode the
(uniform over $[k]$) distribution of each coordinate in $[n]$. With minor
modifications the lemma can be generalized to the setting of distributions over
{\em nonuniform} partitions, in which each coordinate $i\in [n]$ is assigned to an
element of $[k]$ independently according to some distribution $\mu_i$. With this
setup, the operation of taking a mixture of two such distributions
$\mu:=\mu_1\otimes\ldots\otimes\mu_n$ and $\nu:=\nu_1\otimes\ldots\otimes\nu_n$
simply corresponds to taking an average of the corresponding covariance matrices
$X_i$, yielding another mixed characteristic polynomial, which is necessarily
real-rooted by Theorem \ref{thm:if2}.
This fact can be used to give an alternate proof of Lemma
\ref{MDPInterlacing}, since the relevant averages of the conditional expected polynomials
which appear during the interlacing argument are simply mixtures of such
distributions. We have chosen to give a direct proof to keep the presentation
self-contained, but would like to mention that conceptually the interlacing
argument for MDPs works for the same reason as it does for mixed characteristic
polynomials.

The above identity allows us to use the estimates of Theorem \ref{thm:if2} on roots of mixed characteristic polynomials to get effective upper bounds for mixed determinantal polynomials. 

\begin{theorem}\label{thm:rootbound}
Let $A^{(1)}, \cdots, A^{(k)}$ be a $k$ tuple of zero diagonal Hermitian contractions, where $k \geq 2$. Then, 
\[\lambda_{max}\chi[A^{(1)}, \cdots, A^{(k)}] < \dfrac{3\sqrt{2}}{\sqrt{k}}.\]
\end{theorem}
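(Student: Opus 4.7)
The plan is to apply Lemma \ref{lem:mdpmcp} to convert the bound on an MDP into a bound on a mixed characteristic polynomial, and then invoke Theorem \ref{thm:if2}. First, I would normalize the inputs by setting $B^{(i)} := (I + A^{(i)})/(2k)$, which is positive semidefinite since $A^{(i)}$ is a Hermitian contraction. Because each $A^{(i)}$ has zero diagonal, every diagonal entry of $B^{(i)}$ equals $1/(2k)$ and $\|B^{(i)}\| \le 1/k$. The scaling and shift invariances of the MDP (immediate from the definition $\chi[\bA](x) = k^{-n}D[xI - A^{(1)},\ldots,xI - A^{(k)}]$) give
\[
\chi[kB^{(1)}, \ldots, kB^{(k)}](x) = 2^{-n}\,\chi[A^{(1)}, \ldots, A^{(k)}](2x - 1),
\]
so $\lambda_{max}\chi[kB^{(1)}, \ldots, kB^{(k)}] = \tfrac{1}{2}(1 + \lambda_{max}\chi[\bA])$. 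This quantity is automatically nonnegative: the zero diagonals of the $A^{(i)}$ force the coefficient of $x^{n-1}$ in $\chi[\bA]$ to vanish, so the roots of $\chi[\bA]$ sum to zero.

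Next I would invoke Lemma \ref{lem:mdpmcp} with the Gram decomposition $v^i_j := (B^{(i)})^{1/2} e_j$, producing $X_j := \bigoplus_i v^i_j (v^i_j)^*$ and the identity $x^{(k-1)n}\chi[kB^{(1)}, \ldots, kB^{(k)}](x) = \mu[X_1, \ldots, X_n](x)$. A short calculation shows $\mathrm{Tr}(X_j) = \sum_i B^{(i)}(j,j) = 1/2$ and $\sum_j X_j = \bigoplus_i B^{(i)}$ with $\|\sum_j X_j\| \le 1/k$. Since Theorem \ref{thm:if2} requires the stronger condition $\sum_j A_j = I$, I would resolve the discrepancy by scaling and padding: define $Y_j := k X_j$ so that $\sum_j Y_j \preceq I$ with $\mathrm{Tr}(Y_j) = k/2$, and augment the collection with a sufficiently fine rank-one decomposition $\{u_l u_l^*\}$ of $I - \sum_j Y_j \succeq 0$, all of norm at most $k/2$. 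The main technical point is verifying that appending positive semidefinite terms can only increase the largest root of the mixed characteristic polynomial, which is a standard consequence of the real-stability machinery underlying Theorem \ref{thm:if2}.

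Applying Theorem \ref{thm:if2} to the padded collection with $\epsilon = k/2$ gives $\lambda_{max}\mu[Y_1, \ldots, Y_n] \le (1 + \sqrt{k/2})^2$, and undoing the rescaling (using $\lambda_{max}\mu[c X_i] = c\,\lambda_{max}\mu[X_i]$) yields
\[
\lambda_{max}\mu[X_1, \ldots, X_n] \le \frac{1}{k}\bigl(1 + \sqrt{k/2}\bigr)^2 = \frac{1}{k} + \sqrt{\frac{2}{k}} + \frac{1}{2}.
\]
Combining with the identity from the first paragraph gives $\lambda_{max}\chi[\bA] \le 2/k + 2\sqrt{2/k}$. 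For $k \ge 2$ we have $2/k \le \sqrt{2/k}$ (since $\sqrt{2/k} \le 1$), hence $\lambda_{max}\chi[\bA] \le 3\sqrt{2/k} = 3\sqrt{2}/\sqrt{k}$, as desired. The arithmetic is routine; the only subtle step is the scaling-and-padding adaptation of Theorem \ref{thm:if2} to handle the case $\sum_j X_j \ne I$.
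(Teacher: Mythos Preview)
Your proposal is correct and follows essentially the same route as the paper. The paper defines $B^{(i)}=(I+A^{(i)})/2$ directly (rather than your $B^{(i)}=(I+A^{(i)})/(2k)$, which after your rescaling $Y_j=kX_j$ produces exactly the same matrices), applies Lemma~\ref{lem:mdpmcp}, and then invokes Theorem~\ref{thm:if2} with $\epsilon=k/2$ to obtain the identical bound $\lambda_{max}\chi[\bA]<2/k+2\sqrt{2}/\sqrt{k}\le 3\sqrt{2}/\sqrt{k}$. The only substantive difference is that the paper simply notes $\sum_j X_j$ is a positive contraction and applies Theorem~\ref{thm:if2} without further comment, whereas you explicitly flag the gap between $\sum_j Y_j\preceq I$ and $\sum_j Y_j=I$ and close it by padding; in that respect your write-up is slightly more careful than the paper's own proof. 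Your justification for the padding step is correct: if $g(x,z)=a(x)+zb(x)$ is real stable with positive leading coefficients (which holds here), then $b$ interlaces $a$, so $b(\lambda_{max}(a))\ge 0$ and hence $(a-b)(\lambda_{max}(a))\le 0$, forcing $\lambda_{max}(a-b)\ge\lambda_{max}(a)$.
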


\begin{proof}
Define the matrices, 
\[B^{(i)} = \dfrac{I + A^{(i)}}{2}, \quad i \in [k].\]
We note that, 
\[\sigma\left(\chi\left[A^{(1)}, \cdots, A^{(k)}\right]\right) = 2\sigma\left(\chi[B^{(1)}, \cdots, B^{(k)}]\right) - 1 .\]
Letting the $X_i$ be as in the proof of the previous lemma, we note that, 
\[\sum_{j \in [n]} X_j =B^{(1)} \oplus \cdots \oplus B^{(k)},\]
yielding that $X_1 + \cdots + X_n$ is a positive contraction. Further, given that all the vectors $v^{i}_{j}$ (in the proof of the previous lemma) have squared lengths $1/2$, we note that,
\[\operatorname{Trace}\left(X_i\right) = \dfrac{k}{2}, \quad i \in [n].\]
Theorem \ref{thm:if2} now yields that, 
\[\lambda_{max}\left( \chi[B^{(1)}, \cdots, B^{(n)}] \right) = \dfrac{1}{k}\lambda_{max} \left(\mu[X_1, \cdots, X_n]\right) <  \dfrac{(1+\sqrt{k/2})^2}{k} = \dfrac{1}{k} + \dfrac{\sqrt{2}}{\sqrt{k}} + \dfrac{1}{2}.\]
This in turn yields that, 
\[\lambda_{max}\chi[A^{(1)}, \cdots, A^{(k)}] <  \dfrac{2\sqrt{2}}{\sqrt{k}} + \dfrac{2}{k} \leq \dfrac{3\sqrt{2}}{\sqrt{k}}, \quad \text{ if } \, \, k \geq 2.\]
\end{proof}

\begin{proof}[Proof of Theorem \ref{thm:paving}] Given any zero diagonal
$n\times n$ Hermitian $\bA=(A^{(1)},\ldots,A^{(k)})$ and $r$, Lemma
\ref{MDPInterlacing} tells
us that there exists a partition $\X\in\cP_r$ such that
$$\lambda_{max}\chi^\X[\bA]\le
\lambda_{max}\chi[\overbrace{\bA,\ldots,\bA}^{r}].$$
By Theorem \ref{thm:rootbound}, the right hand side is at most
$3\sqrt{2}/\sqrt{rk}$. By Theorem \ref{thm:shrink} this means that for every
$i=1,\ldots,k$ we have
$$\lambda_{max}\chi^\X[A^{(i)}]< k\cdot\frac{3\sqrt{2}}{\sqrt{rk}} =
3\sqrt{2}\sqrt{\frac{k}{r}}.$$
In order to make the latter quantity at most $\epsilon$ it is sufficient to take
$r=18k/\epsilon^2$, as desired.
\end{proof}

We now state a corollary that gives paving estimates for non-Hermitian matrices. We also indicate a simple trick that allows us to get blocks that are all uniformly bounded. 

\begin{corollary}\label{lem:equalpaving} If $A$ is a zero diagonal contraction, and $r$ is an integer, then there is a two-sided paving of $A$ with $r$ such that each block has norm at most $12\sqrt{2}/\sqrt{r}$. Further, there is a paving into $2r$ blocks such that each block has size at most $\lfloor m/r \rfloor$ and norm at
	most $12\sqrt{2}/\sqrt{r}$.
\end{corollary}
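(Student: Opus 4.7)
The plan is to reduce to the Hermitian case by splitting $A$ into its real and imaginary parts: write $A = H + iK$ with $H := (A+A^{*})/2$ and $K := (A-A^{*})/(2i)$. Since $A$ is a zero-diagonal contraction, both $H$ and $K$ are zero-diagonal Hermitian contractions. The key idea is to apply the interlacing-families machinery behind Theorem \ref{thm:paving} \emph{jointly} to the $4$-tuple $\bA := (H,-H,K,-K)$ with $k=4$, rather than applying the theorem as a black box twice (once to $H$ and once to $K$) and taking a common refinement, which would cost an additional factor.

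Concretely, I would repeat the proof of Theorem \ref{thm:paving} verbatim with this choice of $\bA$ and $k = 4$. Lemma \ref{MDPInterlacing} furnishes a partition $\X \in \cP_{r}$ with
\[ \lambda_{max}\,\chi^{\X}[\bA] \;\le\; \lambda_{max}\,\chi[\overbrace{\bA,\ldots,\bA}^{r}] \;<\; \frac{3\sqrt{2}}{\sqrt{4r}} \]
by Theorem \ref{thm:rootbound} applied to the $4r$-tuple $(\bA,\ldots,\bA)$. Theorem \ref{thm:shrink} then gives, for every $M \in \{H,-H,K,-K\}$ and every block $X_{i}$,
\[ \lambda_{max}(P_{X_{i}}MP_{X_{i}}) \;<\; 4\cdot\frac{3\sqrt{2}}{\sqrt{4r}} \;=\; \frac{6\sqrt{2}}{\sqrt{r}}. \]
Combined with the triangle inequality on $A = H + iK$, this yields $\|P_{X_{i}}AP_{X_{i}}\| \le \|P_{X_{i}}HP_{X_{i}}\| + \|P_{X_{i}}KP_{X_{i}}\| \le 12\sqrt{2}/\sqrt{r}$, proving the first assertion.

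For the size-controlled version, I would refine $\X = (X_{1},\ldots,X_{r})$ by subdividing each $X_{j}$ greedily into consecutive sub-blocks of size at most $s := \lfloor n/r\rfloor$. Any such sub-block $Y$ satisfies $Y \subseteq X_{j}$ for some $j$, so $P_{Y}AP_{Y}$ is a principal submatrix of $P_{X_{j}}AP_{X_{j}}$ and inherits the bound $12\sqrt{2}/\sqrt{r}$. Using the elementary estimate $\lceil a/s \rceil \le a/s + 1 - 1/s$ for $a \in \mathbb{Z}_{\ge 0}$, the number of sub-blocks produced from $X_{j}$ is at most $|X_{j}|/s + 1 - 1/s$, summing to
\[ \frac{n}{s} + r - \frac{r}{s} \;=\; 2r - \frac{r-t}{s} \;\le\; 2r, \]
where $n = sr + t$ with $0 \le t < r$. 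The only delicate point is this last integer-arithmetic bound; the rest is bookkeeping on top of the main theorem with $k = 4$. The main conceptual hurdle — matching the constant $12\sqrt{2}$ rather than something worse — is precisely what the joint paving viewpoint (rather than separate pavings of $H$ and $K$) resolves.
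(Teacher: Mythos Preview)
Your proof is correct and follows essentially the same approach as the paper: apply the joint paving machinery to the $4$-tuple $(\operatorname{Re}(A),-\operatorname{Re}(A),\operatorname{Im}(A),-\operatorname{Im}(A))$ to get the $6\sqrt{2}/\sqrt{r}$ bound on each Hermitian part, then use the triangle inequality, and finally subdivide blocks greedily into pieces of size at most $\lfloor m/r\rfloor$. Your ceiling-inequality count for the number of sub-blocks is a slightly cleaner version of the paper's $s\lfloor m/r\rfloor + t \le m$ argument, but the two are equivalent.
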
 
\begin{proof}
By applying Theorem \ref{thm:paving} to the tuple $\textbf{S} =(\operatorname{Re}(A),-\operatorname{Re}(A),\operatorname{Im}(A),-\operatorname{Im}(A))$, we may obtain a paving
	$X_1,\ldots,X_r$ such that,
	\[\lambda_{max} (P_{X_i} S P_{X_i}) \leq \dfrac{\sqrt{72}}{\sqrt{r}}, \quad S \in \textbf{S}, \, i \in [r].\]
	This implies that,
		\[||P_{X_i} \operatorname{Re}(A) P_{X_i}||,\, ||P_{X_i} \operatorname{Im}(A) P_{X_i}|| \leq \dfrac{\sqrt{72}}{\sqrt{r}} = \dfrac{6\sqrt{2}}{\sqrt{r}},\quad  i \in [r].\]
		This in turn implies that,
		\[||P_{X_i} A P_{X_i}|| \leq ||P_{X_i} \operatorname{Re}(A) P_{X_i}|| + ||P_{X_i} \operatorname{Im}(A) P_{X_i}|| \leq 2\cdot \dfrac{6\sqrt{2}}{\sqrt{r}} = \dfrac{12\sqrt{2}}{\sqrt{r}}, \quad i \in [r].\]
	
For the second assertion, take a paving of $A$ into $r$ blocks $\{X_1,
	\cdots, X_{r}\}$ such that each block has norm at most
	$12\sqrt{2}/\sqrt{r}$. Subpartition each block $X_i$ which has more
	than $\lfloor m/r \rfloor$ elements into sub blocks, all of which save
	at most one have size at least $\lfloor m/r \rfloor$ and with at most one sub
	block of smaller size. Suppose we get $s$ blocks of size $\lfloor m/r
	\rfloor$ and $t$ blocks of smaller size (note that $t \leq r$. We have
	that $s\lfloor m/r \rfloor + t \leq m$, yielding that the total number
	of blocks $s+t$ is at most $2r$. Finally since passing to a sub-block
	cannot increase the norm, we get the required norm estimate.
\end{proof}

\section{A Simplified Proof of the Quantitative Commutator Theorem}
In \cite{JOS}, Johnson, Ozawa and Schechtman proved the following theorem,

\begin{theorem}[Johnson, Ozawa, Schechtman, 2013]\label{thm:jos}
For any $\epsilon > 0$, there is a constant $K(\epsilon)$ such that every zero trace matrix $A \in M_m(\mathbb{C})$ may be written as $A = [B, C]$, such that $||B||\, ||C|| \leq K(\epsilon)m^{\epsilon}\, ||A||$.
\end{theorem}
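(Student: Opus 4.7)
The plan is to prove the stronger bound $\|B\|\|C\|\le 300\,e^{9\sqrt{\log m}}\|A\|$ from the corollary stated in the introduction, from which Theorem \ref{thm:jos} follows immediately (pick $\epsilon>0$ and note $e^{9\sqrt{\log m}}=o(m^\epsilon)$). It suffices to handle the zero-diagonal case: the diagonal of a zero-trace matrix is itself a zero-trace diagonal matrix, which is easily written as a commutator with only an $O(\log m)$ constant via a telescoping argument against $\operatorname{diag}(1,\ldots,m)$. So I fix a zero-diagonal contraction $A$ and seek to write it as $[B,C]$ with the desired bound.

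The heart of the argument is a recursive decomposition driven by Corollary \ref{lem:equalpaving}. With a parameter $r$ to be chosen, apply the corollary to obtain a paving $X_1,\ldots,X_{2r}$ with $|X_i|\le\lfloor m/r\rfloor$ and $\|P_{X_i}AP_{X_i}\|\le 12\sqrt 2/\sqrt r$, and split $A=A_D+A_N$ where $A_D=\sum_i P_{X_i}AP_{X_i}$ is block-diagonal with respect to the paving and $A_N$ is the off-block-diagonal remainder. Each block of $A_D$ is zero-diagonal (hence zero-trace), has dimension at most $m/r$, and has norm at most $12\sqrt 2/\sqrt r$; by induction each block is a commutator, and after rescaling the $B_i,C_i$ to have equal norms, the direct sum gives $A_D=[B_D,C_D]$ with $\|B_D\|\|C_D\|\le (12\sqrt 2/\sqrt r)\,f(\lfloor m/r\rfloor)$, where $f$ denotes the upper bound to be proved. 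For $A_N$, I would use a Fourier-type commutator representation: let $B_N$ be the block-diagonal unitary equal to the scalar $(2r)$-th root of unity $\omega^i$ on $X_i$, and solve $[B_N,C_N]=A_N$ by inverting the differences $\omega^i-\omega^j\asymp (i-j)/r$ on each of the $2r-1$ shift components of $A_N$. Summation over shifts yields $\|C_N\|=O(r\log r)\,\|A_N\|$, and since $\|A_N\|\le\|A\|+\|A_D\|\le 2$, one obtains $\|B_N\|\|C_N\|=\tilde O(r)$.

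Because $B_D$ and $B_N$ are both block-diagonal while $C_D$ is block-diagonal and $C_N$ vanishes on the block diagonal, the cross commutators $[B_D,C_N]$ and $[B_N,C_D]$ can be arranged to cancel (or to be absorbed into a small diagonal correction), giving $A=[B_D+B_N,C_D+C_N]$ and therefore a recursion of the form
\[
f(m)\le \frac{C_1}{\sqrt r}\,f(m/r)+C_2\, r\log r.
\]
Choosing $r=\lceil e^{\sqrt{\log m}}\rceil$ produces a recursion of depth $O(\sqrt{\log m})$ which, after careful bookkeeping of the constants, yields the explicit bound $f(m)\le 300\,e^{9\sqrt{\log m}}$. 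The main obstacle is the off-diagonal commutator step: one must construct a representation with only $\tilde O(r)$ norm blowup (rather than the naive $O(r^2)$ one gets by solving $[B,C]=A_N$ entry-wise without Fourier structure) and whose supports are compatible with the block-diagonal commutator for $A_D$ so that the cross-commutators cancel cleanly. The paving estimate of Corollary \ref{lem:equalpaving} is exactly sharp enough in the $\epsilon$-vs-$r$ tradeoff to make the recursion close at the stretched-exponential rate; a paving with, say, $\epsilon=O(r^{-1/4})$ instead of $O(r^{-1/2})$ would produce only polynomial dependence on $m$.
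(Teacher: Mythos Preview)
Your overall architecture (pave, recurse on the diagonal blocks, handle the off-diagonal directly) matches the paper and \cite{JOS}, but the proposal has a genuine gap at the combination step.

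You construct $B_D,C_D$ for $A_D$ and $B_N,C_N$ for $A_N$ separately and then assert that $[B_D+B_N,C_D+C_N]=A$ because ``the cross commutators can be arranged to cancel.'' One cross term does vanish: since $B_N$ is a scalar on each block and $C_D$ is block-diagonal, $[B_N,C_D]=0$. But $[B_D,C_N]$ is an \emph{off-block-diagonal} matrix with $(i,j)$-block $B_{D,i}C_{N,ij}-C_{N,ij}B_{D,j}$, and there is no mechanism making this zero or ``diagonal.'' In fact your recursion itself shows something is wrong: the inequality $f(m)\le (C_1/\sqrt r)\,f(m/r)+C_2 r\log r$ has a contracting coefficient, so iterating it would give $f(m)=O(r\log r)$ for any fixed $r$, i.e.\ a \emph{dimension-free} bound. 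That would be a major breakthrough (it is open whether $\lambda(A)=O(\|A\|)$); the missing $\sqrt r$ factor is exactly what the cross term costs you.

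The paper's resolution is not to build $B_D$ and $B_N$ separately but to use a \emph{single} $B$: after obtaining $A_{ii}=[B_{ii},C_{ii}]$ recursively with $\sigma(B_{ii})\subset\S$, one rescales each $B_{ii}$ by $1/(2\sqrt r)$ and translates it into its own small sub-square of $\S$, so the spectra $\sigma(B'_{ii})$ are pairwise separated. The off-diagonal equations $B'_{ii}C_{ij}-C_{ij}B'_{jj}=A_{ij}$ are then Sylvester equations solved by Rosenblum's contour-integral formula, not by your Fourier inversion of $\omega^i-\omega^j$ (which only works when the $B$-blocks are scalar). The $1/(2\sqrt r)$ scaling of $B$ forces a $2\sqrt r$ blow-up of $C$ on the diagonal, exactly cancelling the $12\sqrt2/\sqrt r$ gain from paving; this is why the recursion reads $\lambda(m)\le 50\,\lambda(m/r)+25 r^{3/2}$ with an $O(1)$ multiplicative constant, and why one only gets $e^{O(\sqrt{\log m})}$. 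Tracking $\|C\|$ with $\sigma(B)\subset\S$ fixed (rather than tracking the product $\|B\|\|C\|$) is also what makes the recursion additive; your product formulation would pick up further uncontrolled cross terms $\|B_D\|\|C_N\|+\|B_N\|\|C_D\|$.
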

They also remarked that optimizing their proof yields a bound of $e^{O(\sqrt{\log m\log \log m})}$.

In this section, we explain how to use Theorem \ref{thm:paving} to give a simple
proof of a  slight improvement of the main theorem of \cite{JOS}. The main idea is
the same as that paper, and consists of three steps. Given $A \in M_m(\mathbb{C})$, the matrix $B$ will be chosen to be diagonal and with entries in the square $\S:=[-1,1]\times [-i,i]\subset \mathbb{C}$.
\begin{enumerate}
\item Given an $A \in M_m(\mathbb{C})$, write it as an
$r\times r$ block matrix containing $r^2$ significantly smaller matrices such
that the diagonal blocks $A_{ii}$, $i=1,\ldots,r$ are square and have smaller norm.
\item Recursively solve the commutator problem for these blocks, to obtain $B_{ii}$
and $C_{ii}$ such that $A_{ii}=[B_{ii},C_{ii}]$ and every $B_{ii}$ is diagonal
with all eigenvalues in the square $\S:=[-1,1]\times [-i,i]\subset \mathbb{C}$.
\item Show that for the same $B$, the off-diagonal blocks $C_{ij}$ may be chosen to have small norm. This
is done by appealing to an explicit solution of the equation
$A_{ij}=B_{ii}C_{ij}-C_{ij}B_{jj}$ due to Rosenblum \cite{rosenblum}, which
implies that the $C_{ij}$ have small norm whenever the spectra of 
$B_{ii},B_{jj}$ are separated, which is achieved by appropriately embedding
these spectra in a tiling of $\S$ by $r$ smaller well-separated squares.
\end{enumerate}

Following \cite{JOS}, let $\lambda(A)$ denote the smallest norm of a matrix
$C$ such that $A=[B,C]$ with the spectrum of $B$ contained in $\S$.  Then we have
the following, which is closely related to Claim $1$ from \cite{JOS}.
\begin{lemma}\label{lem:claim1}
	Suppose $A$ is an $m\times m$ zero diagonal contraction and $X_1,\ldots,X_{r}$ is a partition
	of $[m]$, with $r^2$ a perfect square.  Then,
	$$\lambda(A)\le 2\sqrt{r}\max_{i\le r}\lambda(A_{ii})
	+ 6r^{3/2},$$
	where $A_{ij}$ is the submatrix $A(X_i,X_j)$. 
\end{lemma}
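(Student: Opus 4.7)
The plan is to build $B$ and $C$ blockwise using the partition $X_1,\ldots,X_r$, scaling down the spectra of the diagonal blocks and scattering them across $\S$ so that the Sylvester equations for the off-diagonal entries are well-conditioned. Fix $\epsilon>0$. By the definition of $\lambda$, for each $i\in[r]$ I can choose a diagonal matrix $B_{ii}$ with $\sigma(B_{ii})\subseteq \S$ and a matrix $C_{ii}$ with $\|C_{ii}\|\le \lambda(A_{ii})+\epsilon$ such that $A_{ii}=[B_{ii},C_{ii}]$.

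Setting $s:=\sqrt{r}$ (an integer by hypothesis), I tile $\S$ by $r=s^2$ axis-aligned closed subsquares $\S_1,\ldots,\S_r$ of side $2/s$ centered at points $z_1,\ldots,z_r\in\S$. I pick a small parameter $\delta\in(0,1)$ and define the scaled/shifted diagonal blocks
\[
B'_{ii} \;:=\; \tfrac{1-\delta}{s}\,B_{ii}+z_i I,
\]
so that $\sigma(B'_{ii})$ is contained in a closed subsquare of side $2(1-\delta)/s$ concentric with $\S_i$; in particular $\sigma(B'_{ii})\subseteq\S$, and for $i\ne j$ the distance between $\sigma(B'_{ii})$ and $\sigma(B'_{jj})$ is at least $2\delta/s$. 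I then let $B=\mathrm{diag}(B'_{11},\ldots,B'_{rr})$, which is diagonal with spectrum in $\S$, and take the diagonal blocks of $C$ to be $C'_{ii}:=\tfrac{s}{1-\delta}C_{ii}$, so that $[B'_{ii},C'_{ii}]=A_{ii}$ and $\|C'_{ii}\|\le \tfrac{s}{1-\delta}(\lambda(A_{ii})+\epsilon)$.

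For each pair $i\ne j$, the Sylvester equation $B'_{ii}C_{ij}-C_{ij}B'_{jj}=A_{ij}$ has a unique solution given by the Rosenblum contour integral. Since $B'_{ii}$ and $B'_{jj}$ are diagonal with spectra inside disjoint shrunken tiles separated by at least $2\delta/s$, a suitable contour of bounded length surrounding $\sigma(B'_{ii})$ yields the bound $\|C_{ij}\|\le K\|A_{ij}\|\cdot s/\delta$ for an absolute constant $K$ (or equivalently one may use the explicit diagonal formula $C_{ij}(k,l)=A_{ij}(k,l)/(\lambda_k-\mu_l)$ together with a standard Schur-multiplier estimate). Using $\|A_{ij}\|\le\|A\|\le 1$ gives $\|C_{ij}\|\le Ks/\delta$.

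Assembling $C$ and estimating its norm, the block-diagonal part has norm at most $\tfrac{s}{1-\delta}(\max_i\lambda(A_{ii})+\epsilon)$, while the off-diagonal part, an $r\times r$ block matrix with each block of norm at most $Ks/\delta$, is bounded in operator norm by $r\cdot Ks/\delta=Kr^{3/2}/\delta$. Choosing $\delta=1/2$ (and letting $\epsilon\to 0$) yields $\lambda(A)\le 2\sqrt{r}\max_i\lambda(A_{ii})+2Kr^{3/2}$, which gives the stated bound provided $K\le 3$. The main obstacle is precisely pinning down this constant $K$: one needs a careful Rosenblum/Schur-multiplier estimate for diagonal Hermitian data with spectra in tiles of the square, and then to verify that the optimal choice of the shrinkage $\delta$ together with the geometric estimate of the resolvent contour produces a coefficient on the $r^{3/2}$ term no worse than $6$.
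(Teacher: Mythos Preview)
Your strategy is exactly the paper's: shrink each $B_{ii}$ by a factor $1/(2\sqrt r)$ (your $\delta=1/2$), translate it into its own subsquare of $\S$, and solve the off-diagonal Sylvester equations by Rosenblum's integral. The only gap you flag---pinning down $K$---is precisely what the paper's proof supplies, and it is short enough that you should just do it rather than invoke an unspecified Schur-multiplier bound.

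Take $\gamma=\partial\S_i$, of length $8/\sqrt r$. Since $\sigma(B'_{ii})$ sits in the concentric half-scale square inside $\S_i$, every $z\in\gamma$ is at distance at least $1/(2\sqrt r)$ from $\sigma(B'_{ii})$, so $\|(z-B'_{ii})^{-1}\|\le 2\sqrt r$. For $j\neq i$, any $z\in\partial\S_i$ lies outside the interior of $\S_j$, hence is at distance at least $1/(2\sqrt r)$ from $\sigma(B'_{jj})$ as well, giving $\|(z-B'_{jj})^{-1}\|\le 2\sqrt r$. Thus
\[
\|C_{ij}\|\le \frac{1}{2\pi}\cdot\frac{8}{\sqrt r}\cdot(2\sqrt r)^2\cdot\|A_{ij}\|=\frac{32}{2\pi}\sqrt r\,\|A_{ij}\|<6\sqrt r.
\]
Now bound $\|C\|$ by summing over the $r$ cyclic block-diagonals: the main one contributes $\|{\bigoplus_i C'_{ii}}\|\le 2\sqrt r\max_i\lambda(A_{ii})$, and each of the remaining $r-1$ contributes at most $6\sqrt r$, yielding the stated $6r^{3/2}$. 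Note in particular that your crude ``$r$ times the maximum block norm'' estimate for the off-diagonal part is exactly this cyclic-diagonal decomposition; there is no loss there. Also, in finite dimensions the infimum defining $\lambda(A_{ii})$ is attained by compactness, so the auxiliary $\epsilon$ is unnecessary.
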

\begin{proof} The proof is identical to the proof of Claim $1$ from
	\cite{JOS}, with minor adjustments. Let $B_{ii},C_{ii}$ be matrices
	with $A_{ii}=[B_{ii},C_{ii}]$, for $i=1,\ldots,r$ such that the
	spectra of the $B_{ii}$ lie in $\S$ and $\|C_{ii}\|\le\lambda(A_{ii})$. Subdivide $\S$ into
	$r=\sqrt{r} \times \sqrt{r}$ smaller squares $\S_1,\ldots,\S_{r}$ of side length $2/\sqrt{r}$,
	and let $B_{ii}'$ be $B_{ii}$ scaled by $1/(2\sqrt{r})$ and
	translated by a multiple of the identity so that its spectrum lies
	inside $\S_i$ at distance at least $1/(2\sqrt{r})$ from $\partial \S_i$.
	Since translation does not change the commutator, we still have
	$A_{ii}=[B_{ii}',C_{ii}']$ where
	$C_{ii}'=2\sqrt{r}\,C_{ii}$. Combining these facts, we
	get
	$$\|\bigoplus_{i\le r} C_{ii}'\|\le
	2\sqrt{r}\max_{i\le r}\lambda(A_{ii}).$$

	We now handle the off-diagonal blocks. A result of Rosenblum
	\cite{rosenblum} asserts that for any square matrices $A,S,T$ of the same dimension, the matrix
	$$ X:=\frac{1}{2\pi i}\int_\gamma (z-S)^{-1}A(z-T)^{-1}dz$$
	satisfies $A=SX-XT$, where $\gamma$ is a positively oriented simple closed curve containing the spectrum of $S$ and 
	excluding the spectrum of $T$. One can easily verify by the residue theorem that this solution also works when 
	$A$ is rectangular and $S,T$ are square but of possibly different
	dimensions. Apply this with $A=A_{ij},\, S=B_{ii}', \,T=B_{jj}'$, and $\gamma=\partial
	\S_i$ to obtain a solution $C_{ij}$. Note that
	$\|(z-S)^{-1}\|,\|(z-T)^{-1}\|\leq 2\sqrt{r}$ on $\gamma$, and the length of $\gamma$ is at most $8/\sqrt{r}$ 
	so by the triangle inequality:
	$$ \|C_{ij}\|\le \dfrac{32\sqrt{r}}{2\pi}\|A_{ij}\| \le 6\sqrt{r}\|A\| \le 6\sqrt{r}.$$

	Since the norm of $C$ is at most the sum of the norms of its cyclic diagonals
	(i.e., submatrices containing blocks $C_{ij}$ for $i-j=\ell \, (\operatorname{mod} m)$ for
	$\ell=1, \cdots, (r^2-1)$), and the norm of each diagonal is the maximum
	over the blocks it contains, we conclude that
	$$\|C\|\le 2\sqrt{r}\max_{i\le r}\lambda(A_{ii}) +
	6r^{3/2},$$
	as desired.
\end{proof}	

We now  apply this recursively, together with a use of corollary (\ref{lem:equalpaving}),
	
\begin{theorem}  If $A$ is an $m\times m$ zero trace complex matrix then there exist $B,C$
	with $A = [B,C]$ and with $\|B\|\|C\|\le 300\, e^{9\sqrt{\log(m)}}||A||.$
\end{theorem}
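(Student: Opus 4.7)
The plan is to bound $\lambda(A)$ (in the notation of Lemma \ref{lem:claim1}) by applying that lemma recursively, supplying the required partition at each level via Corollary \ref{lem:equalpaving}, and optimizing the number of blocks $r$ so that the per-level blow-up and the recursion depth balance. Before doing so I would reduce to the zero diagonal case: by the classical fact that every zero trace matrix in $M_m(\mathbb{C})$ is unitarily equivalent to a zero diagonal matrix of the same operator norm, and because unitary conjugation preserves both the commutator structure and the operator norms, it suffices to bound $\lambda(A)$ for zero diagonal contractions. The conversion $\|B\|\,\|C\|\le\sqrt{2}\,\lambda(A)\,\|A\|$ is then automatic, since the $B$ produced inside the recursion is diagonal with spectrum in $\S$ (hence $\|B\|\le\sqrt{2}$), and conjugating back by the Fillmore unitary preserves this norm.

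Let $f(m):=\sup\lambda(A)$ over zero diagonal contractions $A\in M_m(\mathbb{C})$. Given any integer $r$ such that $2r$ is a perfect square, Corollary \ref{lem:equalpaving} produces a partition of $[m]$ into $s:=2r$ blocks of size at most $\lfloor m/r\rfloor$ with $\|A_{ii}\|\le 12\sqrt{2}/\sqrt{r}$ for each principal block $A_{ii}:=P_{X_i}AP_{X_i}$. Each $A_{ii}$ is again zero diagonal, so by the homogeneity of $\lambda$,
$$\lambda(A_{ii})\le \frac{12\sqrt{2}}{\sqrt{r}}\,f(\lfloor m/r\rfloor).$$
Plugging into Lemma \ref{lem:claim1} with $s=2r$ gives the recursion
$$ f(m)\le 2\sqrt{2r}\cdot\frac{12\sqrt{2}}{\sqrt{r}}\,f(m/r) + 6(2r)^{3/2} \;=\; 48\,f(m/r) + 12\sqrt{2}\,r^{3/2}. $$

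Iterating this recursion $L=\lceil\log m/\log r\rceil$ times, with the trivial base case $f(1)=0$, yields $f(m)=O\!\bigl(r^{3/2}\cdot m^{(\log 48)/\log r}\bigr)$. Writing $r=e^u$, the quantity $\tfrac{3}{2}u+(\log 48)(\log m)/u$ is minimized at $u=\sqrt{(2/3)\log 48\cdot \log m}$ with minimum value $\sqrt{6\log 48\cdot \log m}\approx 4.82\sqrt{\log m}$. The exponent $9$ in $e^{9\sqrt{\log m}}$ is therefore comfortably larger than the optimum; the slack absorbs the rounding of $r$ to satisfy the perfect square condition, the integrality of the iteration count, the $\sqrt{2}$ factor from $\|B\|\le\sqrt{2}$, and the base case, while the constant $300$ collects the remaining multiplicative factors. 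The only real obstacle is careful bookkeeping of these constants; all the substantive content is already encoded in Lemma \ref{lem:claim1} and Corollary \ref{lem:equalpaving}, and the normality of $B$ required for the Rosenblum resolvent estimate $\|(z-B_{ii}')^{-1}\|\le 2\sqrt{r}$ is preserved through the Fillmore conjugation because unitary conjugates of diagonal matrices remain normal.
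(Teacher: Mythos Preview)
Your proposal is correct and follows essentially the same route as the paper: apply Corollary~\ref{lem:equalpaving} to obtain $2r$ small blocks, feed them into Lemma~\ref{lem:claim1}, iterate the resulting recursion $f(m)\le 48\,f(m/r)+12\sqrt{2}\,r^{3/2}$ (the paper rounds these constants to $50$ and $25$), and optimize $r\approx e^{\sqrt{(8/3)\log m}}$ to balance the geometric growth against the additive term. Your explicit Fillmore reduction to the zero diagonal case is a detail the paper glosses over, and your closing remark about normality surviving the Fillmore conjugation is unnecessary (the resolvent bound in Lemma~\ref{lem:claim1} is applied inside the recursion, where the $B_{ii}'$ are diagonal by construction) but harmless.
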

\begin{proof}
	We may assume that $A$ has norm one. Let $\Lambda(m)$ denote the maximum of $\lambda(A)$ over all $m\times m$ zero trace matrices with $\|A\|\le 1$, and note that $\lambda(m)$ is nondecreasing in $m$. Let $r$ be such 
	that $2r$ is a perfect square. Apply Lemma \ref{lem:equalpaving} to $A$ to obtain $2r$ 
	blocks $A_{ii}$ of size at most $\lfloor m/r\rfloor $ such that each block has norm at
	most $12\sqrt{2}/\sqrt{r}$. Since $\lambda(A_{ii})\le
	\left(17/\sqrt{r}\right)\lambda(\lfloor m/r \rfloor)$, applying Lemma \ref{lem:claim1} 
	yields the recurrence:
	\begin{align*}
		\lambda(m) &\le
	2\sqrt{2r}\cdot\frac{17}{\sqrt{r}}\lambda(\lfloor m/r\rfloor )+16\sqrt{2}r^{3/2}\\
	&\leq 50\,\lambda(\lfloor m/r \rfloor)+25r^{3/2}
		\\&\le 25\,r^{3/2}\cdot (1+50^1+50^2+\ldots+50^{\log(m)/\log(r)})
		\\&\le 25\, r^{3/2}\cdot e^{4\log(m)/\log(r)}
	\end{align*}
	
	This expression is minimized when $r = e^{\sqrt{8/3}\sqrt{\operatorname{log}(m)}}$. To finish the proof, we remove the constraint on $r$. Replace $r$ by the next even perfect square, which is at most $4r$ (since the ratio between consecutive even squares is 
	$(2r+2)^2/4r^2 = 1 + 2/r + 1/r^2 \leq 4$). The
	final bound is at most the expression evaluated at $r = 4e^{\sqrt{8/3}\sqrt{\operatorname{log}(m)}}$, where it equals,
	\[ 25\cdot 8\cdot e^{\sqrt{6\operatorname{log}(m)}}e^{\sqrt{6}\operatorname{log}(m)/(\operatorname{log}(4) +\sqrt{\operatorname{log}(m)})} \leq 200 \,e^{2\sqrt{6\operatorname{log}(m)}}\leq 200 \,e^{9\sqrt{\operatorname{log}(m)}}.\]

	This yields a $B$ with $\|B\|\le\sqrt{2}$ and $\|C\|\le
	200\,e^{9\sqrt{\log(m)}}$, as desired.  \end{proof}

We have not made any attempt to optimize the constants in the above proof.

\section{Concluding remarks}

We end this paper with three remarks; we first show how our multi-paving bounds are asymptotically sharp, both in the number of matrices $k$ and the desired norm $\epsilon$. We next make a calculation with characteristic polynomials of signed adjacency matrices of graphs to get a weak result concerning eigenvalue bounds for adjacency matrices of non-bipartite regular graphs. Thirdly, we discuss a vectorial version of Anderson paving.

\subsection{Tightness of bounds}

The bounds in the main theorem on multi-paving are sharp. For any $k \in
\mathbb{N}$ and $\epsilon < 1$, one has examples of $k$ zero diagonal matrices
that need at least $r = k\lfloor \epsilon^{-2} \rfloor $ blocks to be jointly
$(r, \epsilon)$ paved. 

Fix a positive integer $k$ and $\epsilon < 1$ and let  $m = \lfloor \epsilon^{-2} \rfloor$.  Let $U$ be the cyclic left shift on $\mathbb{C}^{k}$ and let $F_m$ be the $m \times m$ Fourier matrix,
 \[F_m(i, j) = m^{-1/2} \omega^{(i-1)(j-1)}, \quad i, j \in [m],\]
 for $\omega = e^{2\pi i/m}$. Note that $F_m$ is a unitary and every element of $F_m$ has absolute value equal to $m^{-1/2}$. We now define, 
 \[A^{(j)} := U^{j} \otimes F_m \in M_k(\mathbb{C}) \otimes M_m(\mathbb{C}), \quad j \in [k-1].\]
 
 For the final matrix $A^{(k)}$, we use an idea of Casazza et. al. from \cite{CEKP07}. A \emph{conference matrix} $C_m$ in $M_m(\mathbb{C})$ is a zero diagonal Hermitian such that $C_m C_m^{*} = m-1$. As pointed out in the abovementioned reference, conference matrices are known to exist for infinitely many integers $m$. Define the $k$'th matrix in our tuple as, 
 \[A^{(k)} := \dfrac{(I \otimes C_m)}{ \sqrt{m-1}}.\]

A simple calculation shows, 
\begin{proposition}\label{Optimal}
With $A^{(1)}, \cdots, A^{(k)}$ as above, suppose $X_1 \amalg \cdots \amalg X_r = [km]$ is a paving such that, 
\[|| P_{X_i} A^{(j)} P_{X_i} || < \epsilon,\qquad i \in [r], j \in [k],\]
then, we must have that the $X_i$ are all singletons. Consequently, if $(A^{(1)}, \cdots, A^{(k)})$ can be jointly $(r, \epsilon)$ paved, then $r \geq k\lfloor \epsilon^{-2} \rfloor $.

\end{proposition}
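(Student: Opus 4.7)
The strategy is to show that the tuple $(A^{(1)},\ldots,A^{(k)})$ is ``rigid'' in the sense that every pair of distinct indices must be separated by the paving. I will identify $[km]$ with $[k]\times[m]$ through the tensor decomposition $\mathbb{C}^k\otimes\mathbb{C}^m$ and write any index as a pair $(a,b)$. The proof will reduce to one combinatorial observation about the entries of the $A^{(j)}$, followed by an elementary $2\times 2$ submatrix argument.

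The key observation I would establish is: for any two distinct indices $p=(a_1,b_1)$ and $q=(a_2,b_2)$, there exists $j\in[k]$ with $|A^{(j)}_{p,q}|\geq m^{-1/2}$. To verify this, I split into cases. If $a_1=a_2$ (so $b_1\neq b_2$), then $U^\ell$ has a zero diagonal for every $\ell\in[k-1]$, hence $A^{(\ell)}_{p,q}=0$ for $\ell<k$; on the other hand, $A^{(k)}=(I\otimes C_m)/\sqrt{m-1}$ has $(p,q)$ entry $C_m(b_1,b_2)/\sqrt{m-1}$, whose modulus equals $1/\sqrt{m-1}\geq m^{-1/2}$ using that a Hermitian conference matrix has off-diagonal entries of absolute value one. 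If $a_1\neq a_2$, the cyclic shift structure picks out a unique $\ell\in[k-1]$ with $U^\ell(a_1,a_2)\neq 0$, and the corresponding $(p,q)$ entry of $A^{(\ell)}$ is $F_m(b_1,b_2)$, of modulus exactly $m^{-1/2}$.

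Given this observation, the rest is immediate. Suppose for contradiction that some block $X_i$ of the paving contains two distinct points $p,q$. Choose $j$ from the observation. Since $\{p,q\}\subset X_i$, restriction is norm-nonincreasing, so
\[\|P_{X_i}A^{(j)}P_{X_i}\|\;\geq\;\|P_{\{p,q\}}A^{(j)}P_{\{p,q\}}\|\;\geq\;|A^{(j)}_{p,q}|\;\geq\;m^{-1/2}\;\geq\;\epsilon,\]
where the final inequality uses $m=\lfloor\epsilon^{-2}\rfloor\leq\epsilon^{-2}$. This contradicts the paving hypothesis $\|P_{X_i}A^{(j)}P_{X_i}\|<\epsilon$. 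Hence every $X_i$ is a singleton, so $r=km=k\lfloor\epsilon^{-2}\rfloor$, as claimed.

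I do not foresee any real obstacle here: the construction of the tuple $(A^{(1)},\ldots,A^{(k)})$ is designed precisely so that ``same-block'' coordinate pairs (constant first tensor factor) are resolved by the conference matrix $A^{(k)}$, while ``different-block'' pairs are resolved by exactly one of the Fourier-shift matrices $U^j\otimes F_m$. In each case the relevant entry has modulus at least $m^{-1/2}\geq\epsilon$, which is exactly the obstruction to merging any two indices into a single paving block.
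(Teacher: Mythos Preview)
Your argument is correct and is precisely the ``simple calculation'' the paper alludes to without writing out: for any two distinct coordinates one of the $A^{(j)}$ has a $(p,q)$ entry of modulus at least $m^{-1/2}\ge\epsilon$, so no paving block can contain more than one index. The only point worth flagging is that the assertion ``a Hermitian conference matrix has off-diagonal entries of absolute value one'' is not forced by the bare condition $C_mC_m^*=(m-1)I$ in the paper's definition, but it does hold for the standard $\{0,\pm1\}$ conference matrices referenced via \cite{CEKP07}, so your use is justified.
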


\subsection{A weak result concerning lifts of non-bipartite regular graphs}

In \cite{MSS1}, the authors solved a conjecture due to Bilu and Linial \cite{BiLu}, and used this to show the existence of infinitely many $d$ regular bipartite Ramanujan graphs for every $d > 2$. They showed that for every $d$ regular graph, there is a signed adjacency matrix $A_s$ such that $\lambda_{max}(A_s) \leq 2\sqrt{d-1}$. We now make a calculation that is relevant to non-bipartite graphs.

A straightforward calculation yields that the mixed determinantal polynomials $\chi[A_s, -A_s]$ where $s$ runs over all signings of the graph form an interlacing family and that,
\[\mathbb{E}_{s \in \{-1,1\}^{|E|}} \chi[A_s, -A_s](x) = m_G(\sqrt{2} x),\]
where $m_G$ is the matching polynomial of the graph. Using the Heilman-Lieb bound \cite{HeLi}, we see that there is a signing such that, 
\[\lambda_{max} \chi[A_s, -A_s] \leq \dfrac{2\sqrt{d-1}}{\sqrt{2}}.\]
Together with theorem (\ref{thm:shrink}), we deduce that there is a signing such that, 
\[\operatorname{max} \{\lambda_{max} \chi[A_s],-\lambda_{min}\chi[A_s]\} \leq 2\sqrt{2}\sqrt{d-1}.\]
This bound doesn't however yield anything new; Friedman \cite{Fri04} has shown that for any $\epsilon > 0$, with high probability, the non-trivial eigenvalues of any sufficiently large $d$ regular graph lie in $[-2\sqrt{d-1}-\epsilon, 2\sqrt{d-1}+\epsilon]$.

\subsection{Vectorial versions of Anderson Paving}

It is easy to deduce vectorial versions of Anderson paving. Here is one such version; we seek a paving of a block matrix that respects the block structure. We use the expression $\Delta$ to denote the map sending a matrix to its diagonal.
\begin{theorem}
There is a universal constant $C$ such that for any $\epsilon > 0$ and $r, n, k \in \mathbb{N}$, given a matrix $A \in M_k(\mathbb{C}) \otimes M_n(\mathbb{C})$ such that, 
\[(\Delta \otimes I)[A] = 0, \]
there is a collection of diagonal $n \times n$ projections $P_1, \cdots, P_r$ such that $P_1 + \cdots + P_r  = I$ and where $r \leq C k^4 \epsilon^{-2}$ such that, 
\[||(I \otimes P_i ) A (I \otimes P_i )|| < \epsilon \,||A||, \quad i \in [r].\]
If $A \in M_k(\mathbb{C}) \otimes D_n$, where $D_n$ is the diagonal subalgebra, we make take $r \leq D\,k\epsilon^{-2}$ for a universal constant $D$. In this case, the estimate is tight upto constant factors. 
\end{theorem}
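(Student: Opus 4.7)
The plan is to reduce the vectorial paving problem to a simultaneous multi-paving of a collection of $O(k^{2})$ zero-diagonal scalar Hermitian matrices, to which Theorem~\ref{thm:paving} can be applied directly. Write $A=\sum_{i,l\in[k]}e_{il}\otimes A_{il}$ as a $k\times k$ block matrix with $n\times n$ blocks $A_{il}\in M_{n}$, and interpret the hypothesis $(\Delta\otimes I)[A]=0$ as forcing each block $A_{il}$ to have zero diagonal. Normalize so that $\|A\|=1$; then every $A_{il}$ is a zero-diagonal contraction, since $\|A_{il}\|\le\|A\|$ by the submatrix norm inequality.

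The first step is to decompose each block into Hermitian pieces $A_{il}=H_{il}+iK_{il}$ where $H_{il}:=(A_{il}+A_{il}^{*})/2$ and $K_{il}:=(A_{il}-A_{il}^{*})/(2i)$; both remain zero-diagonal Hermitian contractions. Using the standard block-norm estimate $\|\sum_{i,l}e_{il}\otimes B_{il}\|\le k\max_{i,l}\|B_{il}\|$ (a one-line Cauchy--Schwarz argument), for any diagonal projection $P_{X}$ on $\mathbb{C}^{n}$ one has
\[
\|(I_{k}\otimes P_{X})A(I_{k}\otimes P_{X})\|\le k\max_{i,l}\bigl(\|P_{X}H_{il}P_{X}\|+\|P_{X}K_{il}P_{X}\|\bigr).
\]
It therefore suffices to exhibit a single partition $X_{1}\amalg\cdots\amalg X_{r}=[n]$ that simultaneously $\epsilon/(2k)$-paves all of $\pm H_{il}$ and $\pm K_{il}$ in the one-sided sense, since the sign changes upgrade the one-sided guarantee of Theorem~\ref{thm:paving} to a two-sided bound on each scalar block.

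The second step is to apply Theorem~\ref{thm:paving} to this collection of $4k^{2}$ Hermitian zero-diagonal contractions with paving parameter $\epsilon'=\epsilon/(2k)$; the bound from that theorem produces a common paving into at most $18\cdot 4k^{2}\cdot(2k/\epsilon)^{2}=O(k^{4}/\epsilon^{2})$ blocks, which yields the claimed bound $r\le Ck^{4}\epsilon^{-2}$. The main obstacle, and the source of the $k^{4}$ dependence, is the unavoidable factor $k$ in the block-norm inequality — which forces the finer paving scale $\epsilon/k$ on each scalar piece rather than $\epsilon$ — compounded with the $k^{2}$ scalar matrices extracted from the block structure of $A$; together these two sources of loss account for the $k^{4}$ factor.

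For the special case $A\in M_{k}(\mathbb{C})\otimes D_{n}$, the block structure collapses to a direct sum of $k$ zero-diagonal matrices $A^{(1)},\ldots,A^{(k)}$ in $M_{n}$, so the paving compression decouples as $\|(I_{k}\otimes P_{X})A(I_{k}\otimes P_{X})\|=\max_{i}\|P_{X}A^{(i)}P_{X}\|$, with no $k$-factor loss from the block inequality. Theorem~\ref{thm:paving} applied directly to the $O(k)$ Hermitian parts of the $A^{(i)}$ then yields the improved bound $r\le Dk\epsilon^{-2}$, and a matching lower bound follows from Proposition~\ref{Optimal} via the Fourier-matrix construction described above, giving tightness up to constants.
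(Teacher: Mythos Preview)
Your argument is essentially the paper's: decompose $A$ into its $k^{2}$ blocks $A_{ij}$, jointly pave them (through their Hermitian $\pm$ real/imaginary parts) at scale $\epsilon/k$ via Theorem~\ref{thm:paving} to get $O(k^{4}/\epsilon^{2})$ parts, and then bound the compressed block matrix using $\|(B_{ij})_{i,j}\|\le k\max_{i,j}\|B_{ij}\|$---you are simply more explicit about the Hermitianization step that the paper leaves implicit. For the special case your direct-sum reading (no $k$-loss in the block-norm step, hence only $O(k)$ matrices to pave at scale $\epsilon$) and the tightness appeal to Proposition~\ref{Optimal} likewise match the paper's argument.
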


Write $A = \left(A_{ij}\right)_{i,j \in [k]}$ for matrices $A_{ij} \in M_n(\mathbb{C})$ and let $X_1 \amalg \cdots \amalg X_s = [n]$ be a $(s, \epsilon/k)$ joint paving for the $k^2$ matrices $\left(A_{ij}\right)_{i,j \in [k]}$. By theorem (\ref{thm:paving}), we may take $s = C k^{4} \epsilon^{-2}$ for a suitable universal constant $C$.

 The desired projections are precisely $P_i = I \otimes P_{X_i}$ for $i \in [s]$. Writing $A_{ij}(l)$ for $P_{X_l}A_{ij}P_{X_l}$, we have that $||(I \otimes P_l) A (I \otimes P_l )|| $ equals the norm of the block matrix $A(l) = \left(A_{ij}(l)\right)_{i, j \in [k]}$. $A(l)$ is a $k \times k$ block matrix with each entry being a square matrix with $\operatorname{Rank}(P_l)$ rows. Further, each entry has norm at most $\epsilon/k$ and it is easy to see that the norm of $A(l)$ is at most $\epsilon$ (for instance by writing it as a sum of $k$ matrices, each with a single block on each row and column). 

For matrices in $M_k(\mathbb{C}) \otimes D_n$, we only need to take a $(s, \epsilon)$ joint paving for $k$ matrices, which explains the better result. In this case, the estimate is asymptotically optimal by proposition (\ref{Optimal}).

\bibliographystyle{amsalpha}

\begin{thebibliography}{99}

\bibitem[And79]{anderson1979extreme}
Joel Anderson, \emph{Extreme points in sets of positive linear maps on
  {B}({H})}, Journal of Functional Analysis \textbf{31} (1979), no.~2,
  195--217.
  
  \bibitem[BiLu]{BiLu}
  Yonatan Bilu and Nathan Linial, \emph{Lifts, Discrepancy and Nearly optimal spectral gap}, Combinatorica \textbf{26} (2006), no.~5, 495--519.

\bibitem[BB08]{BBJohnson}
Julius Borcea and Petter Branden, \emph{Applications of stable polynomials to
  mixed determinants: Johnson's conjectures, unimodality, and symmetrized
  fischer products}, DukeMathematical Journal \textbf{143} (2008), no.~2,
  205--223.

\bibitem[BB10]{BBWeylAlgebra}
Julius Borcea and Petter Br\"{a}nd\'{e}n, \emph{Multivariate {Polya-Schur}
  classification problems in the {Weyl} algebra}, Proceedings of the London
  Mathematical Society \textbf{(3) 101} (2010), no.~1, 73--104.

\bibitem[BT87]{BT87}
J.~Bourgain and L.~Tzafriri, \emph{Invertibility of ``large'' submatrices with
  applications to the geometry of {B}anach spaces and harmonic analysis},
  Israel J. Math. \textbf{57} (1987), no.~2, 137--224. 
  
\bibitem[CEKP07]{CEKP07}
Pete Casazza, Dan Edidin, Deepti Kalra, and Vern~I. Paulsen, \emph{Projections
  and the {K}adison-{S}inger problem}, Operators and Matrices \textbf{1}
  (2007), no.~3, 391–408.

\bibitem[Fel80]{Fell}
H.~J. Fell, \emph{Zeros of convex combinations of polynomials}, Pacific J.
  Math. \textbf{89} (1980), no.~1, 43--50.
  
  \bibitem[Fri04]{Fri04}
  Joel Friedman, \emph{A Proof of Alon's Second Eigenvalue Conjecture and Related Problems}, Mem. of the AMS, \textbf{910} (2004).
  
  \bibitem[HeLi]{HeLi}
  Ole~J. Heilman and Elliot~H.Lieb, \emph{Theory of Monomer-dimer systems}, Comm. Math. Phy. \textbf{25} (1972) no.~1, 190--232.

\bibitem[JOS13]{JOS}
William~B. Johnson, Narutaka Ozawa, and Gideon Schechtman, \emph{A quantitative
  version of the commutator theorem for zero trace matrices}, Proc. Natl. Acad.
  Sci. USA \textbf{110} (2013), no.~48, 19251--19255.

\bibitem[MSS13]{MSS1}
Adam~W. Marcus, Daniel~A Spielman, and Nikhil Srivastava, \emph{Interlacing
  families i: Bipartite {R}amanujan graphs of all degrees}, Foundations of
  Computer Science (FOCS), 2013 IEEE 54th Annual Symposium on, IEEE, 2013,
  pp.~529--537.

\bibitem[MSS14]{MSSICM}
Adam~W. Marcus, Daniel~A. Spielman, and Nikhil Srivastava, \emph{Ramanujan
  graphs and the solution of the {K}adison-{S}inger problem}, Proc. ICM
  \textbf{3} (2014), 375--386.

\bibitem[MSS15]{MSS2}
Adam~W. Marcus, Daniel~A. Spielman, and Nikhil Srivastava, \emph{Interlacing families {II}: {M}ixed characteristic polynomials
  and the {K}adison-{S}inger problem}, Ann. of Math. (2) \textbf{182} (2015),
  no.~1, 327--350.

\bibitem[MSS17]{IF3}
Adam Marcus, Daniel Spielman, and Nikhil Srivastava, \emph{Interlacing families
  iii: Restricted invertibility}, in preparation (2017).

\bibitem[NY16]{naor2016restricted}
Assaf Naor and Pierre Youssef, \emph{Restricted invertibility revisited}, arXiv
  preprint arXiv:1601.00948 (2016).

\bibitem[PV15]{popa2015paving}
Sorin Popa and Stefaan Vaes, \emph{Paving over arbitrary masas in von neumann
  algebras}, Analysis \& PDE \textbf{8} (2015), no.~4, 1001--1023.

\bibitem[Rav16]{MRKSP}
Mohan Ravichandran, \emph{Mixed determinants and the kadison-singer problem},
  https://arxiv.org/abs/1609.04195 (2016).

\bibitem[Rav17]{MRI}
Mohan Ravichandran, \emph{Principal {S}ubmatrices, {R}estricted {I}nvertibility and a
  {Q}uantitative {G}auss-{L}ucas theorem}, https://arxiv.org/abs/1609.04187
  (2017).
	
	\bibitem[ROS56]{rosenblum}
	Marvin Rosenblum, \emph{On the operator equatşon $BX-XA=Q$}, 	Duke. Math. J. \textbf{23} (1956), 263 -- 269.

\bibitem[SS12]{SS12}
Daniel~A Spielman and Nikhil Srivastava, \emph{An elementary proof of the
  restricted invertibility theorem}, Israel Journal of Mathematics \textbf{190}
  (2012), no.~1, 83--91.

\bibitem[Ver01]{vershynin2001john}
Roman Vershynin, \emph{John's decompositions: selecting a large part}, Israel
  Journal of Mathematics \textbf{122} (2001), no.~1, 253--277.

\bibitem[You14]{YoussefIMRN}
Pierre Youssef, \emph{A note on column subset selection}, Int. Math. Res. Not.
  IMRN (2014), no.~23, 6431--6447.
\end{thebibliography}

\end{document}